\numberwithin{equation}{section}
\newcommand{\be}{\begin{eqnarray}}
\newcommand{\ee}{\end{eqnarray}}
\newcommand{\ce}{\begin{eqnarray*}}
\newcommand{\de}{\end{eqnarray*}}
\newtheorem{theorem}{Theorem}[section]
\newtheorem{lemma}[theorem]{Lemma}
\newtheorem{proposition}[theorem]{Proposition}
\newtheorem{corollary}[theorem]{Corollary}
\theoremstyle{remark}
\newtheorem{assumption}[theorem]{Assumption}
\newtheorem{example}[theorem]{Example}
\newtheorem{remark}[theorem]{Remark}
\newtheorem{definition}[theorem]{Definition}
\crefname{eqn}{Equation}{Equations}
\crefname{assumption}{Assumption}{Assumptions}
\crefname{innercustomthm}{Condition}{Conditions}
\def\eps{\varepsilon}
\def\d{\delta}
\def\<{{\langle}}
\def\>{{\rangle}}
\def\({{\Big(}}
\def\){{\Big)}}
\def\bx{{\mathbf{x}}}
\def\={&\!\!=\!\!&}
\def\bt{\begin{theorem}}
\def\et{\end{theorem}}
\def\bl{\begin{lemma}}
\def\el{\end{lemma}}
\def\br{\begin{remark}}
\def\er{\end{remark}}
\def\bd{\begin{definition}}
\def\ed{\end{definition}}
\def\bp{\begin{proposition}}
\def\ep{\end{proposition}}
\def\bc{\begin{corollary}}
\def\ec{\end{corollary}}
\def\bx{\begin{example}}
\def\ex{\end{example}}
\def\cL{{\mathcal L}}
\def\cP{{\mathcal P}}
\def\mE{{\mathbb E}}
\def\mI{{\mathbb I}}
\def\mP{{\mathbb P}}
\def\geq{\geqslant}
\def\leq{\leqslant}
\def\div{\mathord{{\rm div}}}
\newcommand{\dd}{\,\mathrm{d}}
\newcommand{\R}{{\mathbb R}}
\newcommand{\norm}[1]{{\left\vert\kern-0.25ex\left\vert\kern-0.25ex\left\vert #1
    \right\vert\kern-0.25ex\right\vert\kern-0.25ex\right\vert}}
\begin{document}
	\title{Stability estimates for singular SDEs and applications}
	\date{\today}
	\author{Lucio Galeati \and Chengcheng Ling}
\address{Lucio Galeati:
Rheinische Friedrich-Wilhelms-Universität Bonn,
Institute for Applied Mathematics \& Hausdorf Center for Mathematics,
Endenicher Allee 60, 53115 Bonn, Germany
\\
Email:  lucio.galeati@iam.uni-bonn.de
}

\address{Chengcheng Ling:
Technische Universit\"at Berlin,
Fakult\"at II, Institut f\"ur Mathematik,
10623 Berlin, Germany
\\
Email: ling@math.tu-berlin.de
 }

	\begin{abstract}
	We consider multidimensional SDEs with singular drift $b$ and Sobolev diffusion coefficients $\sigma$, satisfying Krylov--R\"ockner type assumptions.
	We prove several stability estimates, comparing solutions driven by different $(b^i,\sigma^i)$, both for It\^o and Stratonovich SDEs, possibly depending on negative Sobolev norms of the difference $b^1-b^2$.
	We then discuss several applications of these results to McKean--Vlasov SDEs, criteria for strong compactness of solutions and Wong--Zakai type theorems.\\[1ex]
		\noindent {{\bf AMS 2020 Mathematics Subject Classification:} 60H10,  60H50, 60F15, 60J60}
	\\[1ex]
		\noindent{{\bf Keywords:} SDEs with singular coefficients, Stability, Krylov--R\"ockner condition, Distributional drifts, McKean--Vlasov equations, Strong compactness of solutions, Wong--Zakai Theorem} 
	\end{abstract}

\maketitle	

\section{Introduction}
In this paper we consider multidimensional singular SDEs of the form
\begin{equation}\label{eq:sde}
\dd X_t = b_t(X_t)\dd t + \sigma_t(X_t)\dd W_t, \quad X\big\vert_{t=0}=X_0
\end{equation}
where $b:[0,T]\times \R^d\to \R^d$, $\sigma:[0,T]\times \R^d\to \R^{d\times d}$ and $W$ is a $d$-dimensional Brownian motion. The interval $[0,T]$ is finite, although arbitrarily large, and the initial condition $X_0$ is possibly random and independent of $W$. The drift $b$ is integrable, possibly unbounded; $b$ and $\sigma$ will be always assumed to satify relatively standard assumptions under which strong well-posedness is known to hold.
To properly formulate them, let us first define for $\beta \in [0,1]$ the sets
\begin{equation*}
\mathcal{J}_\beta:=\left\{(p,q): p,q\in \Big(\frac{2}{2-\beta},\infty\Big),\quad \frac{2}{q}+\frac{d}{p}<2-\beta\right\};
\end{equation*}
we will be mostly interested in considering $\mathcal{J}_0$ and $\mathcal{J}_1$.

\begin{assumption}\label{main-ass}
The following hold.
\begin{itemize}
    \item[($\mathcal{H}^b$)] There exist $(p_1,q_1)\in \mathcal{J}_1$ such that $b\in L^{q_1}_t L^{p_1}_x$.
    \item[($\mathcal{H}^\sigma_1$)] There exists a constant $K$ such that $\sigma$ is uniformly bounded and nondegenerate, namely
\[
K^{-1} |\xi|^2 \leq |\sigma^\ast (t,x) \xi|^2 \leq K|\xi|^2 \quad
\forall\,\xi\in\R^d,\,(t,x)\in [0,T] \times \R^d.
\]
 	\item[($\mathcal{H}^\sigma_2$)] $\sigma$ is uniformly continuous in space, uniformly in time, with modulus of continuity $h$, in the sense that
\[
|\sigma(t,x)-\sigma(t,y)|\leq h(|x-y|) \quad \forall (t,x,y)\in [0,T]\times \R^{2d}.
\]
	\item[($\mathcal{H}^\sigma_3$)] There exist $(p_2,q_2)\in \mathcal{J}_1$ such that $\nabla \sigma\in L^{q_2}_t L^{p_2}_x$. 
\end{itemize}
\end{assumption}

It is well established that under \cref{main-ass}, the SDE \eqref{eq:sde} is strongly well-posed for any initial $X_0$ independent of $W$, see e.g. \cite{XXZZ2020} and the basic recap in \cref{subsec:preliminaries}.
In this work we are instead interested in deriving \textit{stability estimates}, in the sense of comparing two distinct solutions $X^1$, $X^2$ driven by the same $W$ but with respect to different initial conditions, drifts and diffusions.
The next statement exemplifies some of our main findings in this regard.

\begin{theorem}\label{thm:main-theorem}
Let $(b^1,\sigma^1)$, $(b^2,\sigma^2)$ satisfy \cref{main-ass} for the same parameters $(p_1,q_1)$, $(p_2,q_2)$, $K$, $h$ and denote by $X^1$, $X^2$ the associated solutions to \eqref{eq:sde}.
Then for any $m\in [1,\infty)$, $T>0$ and any $(\tilde p_1,\tilde q_1)\in \mathcal{J}_0$, $(\tilde p_2,\tilde q_2)\in \mathcal{J}_1$ there exists a constant $C$ (depending on all the aforementioned parameters as well as $d$, $\| b^i\|_{L^{q_1}_t L^{p_1}_x}$ and $\| \nabla \sigma^i\|_{L^{q_2}_t L^{p_2}_x}$) such that
\begin{align}\label{eq:main-stab-1}
    \bigg\| \sup_{t\in[0,T]}|X_t^1-X_t^2|\bigg\|_{L^m_\omega}
    \leq C\, \Big[ \| X^1_0-X^2_0\|_{L^m_\omega} + \| b^1-b^2\|_{L^{\tilde q_1}_t L^{{\tilde p_1}}_x}
    + \| \sigma^1-\sigma^2\Vert_{L^{\tilde q_2}_t L^{\tilde p_2}_x} \Big].
\end{align}
If moreover $q_2=\infty$ and $4/q_1+d/p_1<1$, we have
\begin{align}\label{eq:main-stab-2}
    \bigg\| \sup_{t\in[0,T]}|X_t^1-X_t^2|\bigg\|_{L^m_\omega}
    \leq \tilde{C}\, \Big[ \| X^1_0-X^2_0\|_{L^m_\omega} + \| b^1-b^2\|_{L^{q_1}_t W^{-1,p_1}_x}
    + \| \sigma^1-\sigma^2\Vert_{L^{\tilde q_2}_t L^{\tilde p_2}_x}\Big]
\end{align}
where $\tilde{C}$ is another positive constant with similar dependence on the parameters.
\end{theorem}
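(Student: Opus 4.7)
The natural strategy is to reduce both SDEs to a ``regularized'' form via a Zvonkin transformation applied simultaneously to $X^1$ and $X^2$, and then to upgrade the routine Gronwall argument by an additional parabolic PDE trick for the refined estimate \eqref{eq:main-stab-2}. Concretely, for each $i=1,2$ and $\lambda$ sufficiently large I would introduce the Zvonkin corrector $u^i$ solving the backward PDE
\[
\partial_t u^i + \tfrac{1}{2}\operatorname{tr}\bigl(\sigma^i(\sigma^i)^\ast\nabla^2 u^i\bigr) + b^i\cdot\nabla u^i - \lambda u^i + b^i = 0,\qquad u^i(T,\cdot)=0.
\]
Under \cref{main-ass}, Krylov--R\"ockner maximal regularity provides $\|u^i\|_{L^\infty_tL^\infty_x}\le 1/2$ together with bounded $\nabla u^i$, so that $\Phi^i(t,x):=x+u^i(t,x)$ is a bi-Lipschitz near-identity diffeomorphism. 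It\^o's formula on $Y^i_t:=\Phi^i(t,X^i_t)$ then cancels the singular drift and yields
\[
\dd Y^i_t = \lambda u^i(t,X^i_t)\,\dd t + \bigl[(I+\nabla u^i)\sigma^i\bigr](t,X^i_t)\,\dd W_t,\qquad |X^1_t-X^2_t|\simeq |Y^1_t-Y^2_t|.
\]

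To prove \eqref{eq:main-stab-1}, I split each coefficient difference $g^1(t,X^1_t)-g^2(t,X^2_t)$ as $[g^1(t,X^1_t)-g^1(t,X^2_t)]+[(g^1-g^2)(t,X^2_t)]$. The first bracket is bounded by $|X^1_t-X^2_t|$ times an average of $|\nabla g^i|$ along the interpolation between $X^1$ and $X^2$, which is either pointwise bounded (for $g^i=u^i$) or lies in a Krylov-admissible $\mathcal J_1$-class (for $g^i=(I+\nabla u^i)\sigma^i$, thanks to $(\mathcal H^\sigma_3)$). The second bracket is controlled through the linear PDE satisfied by $u^1-u^2$, whose source is a linear combination of $b^1-b^2$ and $\sigma^1-\sigma^2$ tested against the already-regular $\nabla u^2,\nabla^2 u^2$. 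Combining this with the standard Krylov bound
\[
\Big\|\int_0^T|f|(s,X^i_s)\,\dd s\Big\|_{L^m_\omega}\lesssim \|f\|_{L^{\tilde q_1}_tL^{\tilde p_1}_x}\quad\text{for }(\tilde q_1,\tilde p_1)\in\mathcal J_0,
\]
with BDG for the martingale part and with (stochastic) Gronwall, one reproduces the $\|b^1-b^2\|_{L^{\tilde q_1}_tL^{\tilde p_1}_x}$ and $\|\sigma^1-\sigma^2\|_{L^{\tilde q_2}_tL^{\tilde p_2}_x}$ contributions appearing in \eqref{eq:main-stab-1}.

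For \eqref{eq:main-stab-2}, to replace the $L^{\tilde q_1}_tL^{\tilde p_1}_x$-norm of $b^1-b^2$ by its negative-Sobolev counterpart, I would introduce an additional backward PDE
\[
\partial_t v + \tfrac{1}{2}\operatorname{tr}\bigl(\sigma^1(\sigma^1)^\ast\nabla^2 v\bigr) + b^1\cdot\nabla v - \lambda v + (b^1-b^2) = 0,\qquad v(T,\cdot)=0,
\]
where the forcing is only a distribution. Under the subcriticality $4/q_1+d/p_1<1$ and $q_2=\infty$, parabolic $L^q$--$L^p$ theory for negative Sobolev sources yields a solution with $v\in L^\infty_tL^\infty_x$ and $\nabla v\in L^{\tilde q}_tL^{\tilde p}_x$ for some $(\tilde q,\tilde p)\in\mathcal J_0$, with norms linear in $\|b^1-b^2\|_{L^{q_1}_tW^{-1,p_1}_x}$. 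After smoothing and passing to the limit, It\^o's formula applied to $v(t,X^1_t)$ produces the identity
\[
\int_0^T(b^1-b^2)(s,X^1_s)\,\dd s = v(0,X^1_0) + \lambda\int_0^T v(s,X^1_s)\,\dd s + \int_0^T \nabla v(s,X^1_s)\,\sigma^1(s,X^1_s)\,\dd W_s,
\]
whose right-hand side is controlled in $L^m_\omega$ by $\|b^1-b^2\|_{L^{q_1}_tW^{-1,p_1}_x}$, using the sup-norm bound on $v$ together with BDG and the Krylov estimate applied to $|\nabla v|^2$. Inserting this identity into the argument of the previous step yields \eqref{eq:main-stab-2}.

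The hardest step is the parabolic analysis of $v$ in the negative Sobolev regime and the choice of exponents producing simultaneously $v\in L^\infty_tL^\infty_x$ and $\nabla v$ in a class on which the It\^o--Krylov machinery can act on $|\nabla v|^2$. The margin $4/q_1+d/p_1<1$, stricter than the natural $2/q_1+d/p_1<1$ associated to $(p_1,q_1)\in\mathcal J_1$, is precisely the extra subcriticality needed to absorb the loss of one derivative of regularity in passing from $L^{q_1}L^{p_1}$ to $L^{q_1}W^{-1,p_1}$ forcings (so that $v$ still embeds into $C_b$) \emph{while} keeping $\nabla v$ inside a $\mathcal J_0$-admissible scale for $|\nabla v|^2$; the time-independence hypothesis $q_2=\infty$ in turn allows the use of sharp parabolic maximal regularity without distortion coming from a variable-in-time diffusion coefficient.
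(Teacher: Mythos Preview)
Your overall ingredients---Zvonkin transformation, Krylov--Khasminskii bounds, and an auxiliary parabolic PDE to handle distributional forcing---match those of the paper, but the architecture differs in a way that creates real difficulties. The paper performs a \emph{single} Zvonkin transformation, based on $(b^1,\sigma^1)$ only, and applies the same map $\phi_t(x)=x+u_t(x)$ to both $X^1$ and $X^2$. Writing $X^2$ as a perturbation of the $(b^1,\sigma^1)$-SDE with $R^1_t=(b^2-b^1)_t(X^2_t)$ and $R^2_t=(\sigma^2-\sigma^1)_t(X^2_t)$, the difference $\phi_t(X^1_t)-\phi_t(X^2_t)$ produces an explicit error process $V_t$ containing $I^3_t=\int_0^t[(b^2-b^1)\cdot(I+\nabla u)]_r(X^2_r)\,\dd r$. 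This is precisely the object to which the distributional estimate (your auxiliary $v$, the paper's Lemma~2.4) is applied, after a product rule $\|(b^2-b^1)(I+\nabla u)\|_{L^{q_1/2}_tW^{-1,p_1}_x}\lesssim\|b^1-b^2\|_{L^{q_1}_tW^{-1,p_1}_x}\|u\|_{L^{q_1}_tW^{2,p_1}_x}$; the condition $4/q_1+d/p_1<1$ is exactly $(q_1/2,p_1)\in\mathcal J_1$. In your double-Zvonkin setup the transformed drifts are $\lambda u^i$, so the functional $\int_0^T(b^1-b^2)(s,X^1_s)\,\dd s$ never actually appears in the difference equation; it is not clear what ``inserting this identity into the argument of the previous step'' means, since the quantity you represent via $v$ is not the one you need to bound.

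There is a second gap in the treatment of the diffusion increment. You bound $g^1(t,X^1_t)-g^1(t,X^2_t)$ by $|X^1_t-X^2_t|$ times $\int_0^1|\nabla g^1|(t,X^2_t+\theta(X^1_t-X^2_t))\,\dd\theta$ and then invoke a Krylov bound. But Krylov/Khasminskii estimates are available for $X^1$ and $X^2$, not for the interpolated process $X^2+\theta(X^1-X^2)$, which is not a diffusion with controlled coefficients. The paper resolves this via a \emph{one-sided} pointwise inequality $|\tilde\sigma_t(x)-\tilde\sigma_t(y)|\le g_t(x)|x-y|$ with $g\in L^{q_i}_tL^{p_i}_x$ (obtained from the maximal function, cf.\ Lemma~2.5), so that only $g_t(X^1_t)$ enters; combined with a stopping-time iteration on the additive functional $A_t=\int_0^t(\lambda^2T+|g^1_r|^2(X^1_r)+|g^2_r|^2(X^1_r))\,\dd r$, this replaces your stochastic Gronwall step and yields all moments $m\in[1,\infty)$. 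Your sketch would be repaired by switching to a single Zvonkin transform and replacing the interpolation argument by this maximal-function device.
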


To the best of our knowledge, the first paper to discuss the importance of quantitative stability estimates is \cite{zhang2016stochastic}.
Therein, in the case of common diffusion $\sigma^1=\sigma^2$, an estimate for $X^1-X^2$ in function of $\| b^1-b^2\|_{L^{q_1}_t L^{p_1}_x}$ is presented in Theorem 1.1-(E); instead another estimate, in the case $b^1=b^2=0$ and $\sigma^1\neq \sigma^2$ is provided in \cite[Lemma 5.3]{zhang2016stochastic}.
The results is however subject to several restrictions, most importantly the assumption $q=p>d+2$ needed to show well-posedness of the corresponding Kolmogorov equation, and the focus on $m=2$.
Later, again for $\sigma^1=\sigma^2$, the stability result was generalized in \cite[Theorem 3.3]{ling2021wong}, for any values $(p,q)\in \mathcal{J}_1$, but again the resulting estimate is only in function of $\| b^1-b^2\|_{L^q_t L^p_x}$ and holds for $m=2$.
A quite general stability estimate, true for any $m\in [1,\infty)$, is presented in \cite[Theorem 3.10]{XieZhang}, where the SDE is also allowed to have a multiplicative Lévy noise; however, the condition  $\nabla b^1\in L_t^{q'}L_x^{p'}$ for suitable $p',q'\in(1,\infty)$ must be assumed to obtain the stability result.
To the best of our knowledge, the first paper to consider stability estimates in negative Sobolev norms (namely in the style of \eqref{eq:main-stab-2}) is \cite{galeati2022distribution}, which considers the more general framework of SDEs driven by fBm; however in the Brownian setting $H=1/2$, it only allows for additive noise $\sigma=\mathbb{I}$ and drifts $b$ of spatial regularity at least $C^\alpha$ for some $\alpha>0$. The same work also shows how such estimates can be used to solve McKean--Vlasov SDEs.
\cref{thm:main-theorem} can be considered a great generalization of all of the aforementioned results, allowing for any value $m\in [1,\infty)$, different integrability coefficients $(p_1,q_1)\neq (p_2,q_2)$ for $b$ and $\sigma$ in \cref{main-ass} as well as parameters $(\tilde q_1, \tilde p_1)$, $(\tilde q_2, \tilde p_2)$ different from them in the estimates \eqref{eq:main-stab-1} and \eqref{eq:main-stab-2}.
Let us finally mention that this work is hugely inspired by the previous work \cite{le2021taming} by one of the authors, where numerical schemes for singular SDEs are considered; although technically it doesn't contain stability estimates, it is clear that the quantity $\varpi_n$ appearing in \cite [Theorem 1.1]{le2021taming} is closely related to the error committed by comparing two SDEs driven respectively by drift $b^n$ and $b$ respectively, in the case of common diffusion $\sigma$.
We refer to \cref{subsec:euler-maruyama} for a deeper discussion on this point.\\[1ex]
The proof of \cref{thm:main-theorem} will be presented in \cref{subsec:main-proof} and is in fact consequence of a more abstract and much more general stability result, see \cref{stab-general}; the same strategy of proof allows for numerous variants, see \cref{rem:interpolation-stability}, as well as similar results for Stratonovich SDEs, cf. \cref{cor:stratonovich}.

In order to motivate the importance of results in the style of \cref{thm:main-theorem}, we will present in \cref{sec:applications} several applications, ranging from existence and uniqueness results for distribution dependent (or McKean--Vlasov) SDEs, criteria for strong compactness of solutions to singular SDEs and Wong--Zakai type theorems.
We will also shortly discuss in \cref{subsec:other} other applications of such results which have already appeared in the literature, although without it being stated explicitly; indeed, we believe one of nice contributions of this work is also to highlight the importance of stability estimates for SDEs in a wide range of problems, which have so far being treated without recognizing their key role in them.

\begin{remark}
For simplicity in this paper we only considered drifts with global bounds in $L^q_t L^p_x$.
Up to technical details, we expect the same strategy to work in many other cases, including:
\begin{itemize}
    \item[a)] $b, \nabla \sigma \in L_t^q\tilde L_x^p$ where $\tilde L_x^p$ is the so called localized $L_x^p$ considered in  \cite{XXZZ2020}; one of the advantages of this case is that bounded $b$ is covered as well.
    \item[b)] The case of $b$ of the form $b=b^1+\ldots+b^n$ with $b^i\in L^{q_i}_t L^{p_i}_x$ with $(q_i,p_i)\in \mathcal{J}_1$.
    \item[c)]  The presence of an additional Lipschitz term, i.e. $b=b^1+b^2$ with $b^1\in L^q_t L^p_x$ and $b^2\in L^\infty_t C^1_x$ as in \cite{zhang2021zvonkin}.
    \item[d)] Coefficients belonging to mixed-normed spaces, i.e. $b\in L^q_t L^{p_1}_{x_1} \ldots L^{p_d}_{x_d}$ with $2/q+\sum_i 1/p_i<1$, as considered in \cite{ling2021strong}. 
\end{itemize}

\end{remark}

\subsection*{Structure of the paper}
We start by recalling in \cref{subsec:preliminaries} some basic facts related to the SDE \eqref{eq:sde} and deriving some key lemmas needed for later; we then pass to prove several stability results, including \cref{thm:main-theorem}, in \cref{subsec:abstract} and \cref{subsec:main-proof}.
We then present several applications in \cref{sec:applications}: well-posedness for some classes of Mckean-Vlasov SDEs in \cref{subsec:MKV}; strong compactness of solutions to singular SDEs \cref{subsec:compactness}; finally, a Wong--Zakai theorem for singular SDEs in \cref{subsec:wong-zakai}.
We conclude with \cref{sec:Wong-Zakai}, containing more details on the proof of the Wong--Zakai theorem under classical regularity conditions, i.e. $b\in C_x^1$  and $\sigma\in C_x^2$.

\subsection*{Notations and conventions}
We always work on a finite time interval $[0,T]$. We write $a\lesssim b$ to mean that there exists a positive constant $C$ such that $a \leq C b$; we use the index $a\lesssim_\lambda b$ to highlight the dependence $C=C(\lambda)$.
For vectors $x,\, y\in \R^d$, we write $x\cdot y:=\sum_{i=1}^n x_i y_i$ and $|x|=\sqrt{x\cdot x}$; for matrices $A,\, B\in \R^{d\times d}$, we use the convention $A:B:=\sum_{1\leq i,j\leq n}A_{ij}B_{ij}$. Goven a real number $z$, we denote by $\lfloor z\rfloor$ and $\{z\}$ respectively its integer and fractional parts.
%

For any $N\in \mathbb{N}$ and $p\in [1,\infty]$, we denote by $L^p(\R^d;\R^N)$ the standard Lebesgue space; when there is no risk of confusion in the parameter $N$, we will simply write $L^p_x$ for short and denote by $\| \cdot\|_{L^p_x}$ the corresponding norm.
Similarly for the Bessel potential spaces $W^{\beta,p}_x=W^{\beta,p}(\R^d;\R^N)$, which are defined for $\beta\in\R$, with corresponding norm
$$\|f\|_{W_x^{\beta, p}}:=\|(\mI-\Delta)^{\beta/2}f\|_{L_x^{ p}};$$
For $\alpha\in [0,+\infty)$, $C_x^\alpha =C^\alpha (\mathbb{R}^d;\R^N)$ stands for the usual H\"older continuous function space, made of continuous bounded functions with with continuous and bounded derivatives up to order $\lfloor \alpha\rfloor\in\mathbb{N}$ and with globally $\{\alpha\}$-H\"older continuous derivatives of order $\lfloor \alpha\rfloor$.

We denote by $C_T=C([0,T];\R^d)$ the path space of continuous functions on $[0,T]$, endowed with the supremum norm $\| \varphi\|_{C_T}=\sup_{t\in [0,T]} |\varphi_t|$.


Given a Banach space $E$ and a parameter $q\in [1,\infty]$, we denote by $L^q_t E = L^q(0,T;E)$ the space of measurable functions $f:[0,T]\to E$ such that
\[
\| f\|_{L^q_t E}:=\Big( \int_0^T \| f_t\|_E^q \dd t\Big)^{\frac{1}{q}} <\infty
\]
with the usual convention of the essential supremum norm in the case $q=\infty$. Similarly, given a probability space $(\Omega,\mathcal{F},\mathbb{P})$ and $m\in [1,\infty)$, we denote by $L^m_\omega E = L^m(\Omega,\mathcal{F},\mathbb{P};E)$ the space of $E$-valued $\mathcal{F}$-measurable random variables $X$ such that
\[
\| X\|_{L^m_\omega E}:= \mathbb{E} \big[ \| X\|_E^m\big]^{\frac{1}{m}}<\infty
\]
where $\mE$ denotes expectation w.r.t. $\mathbb{P}$. The above definitions can be concatenated by choosing at each step a different $E$, so that one can define $L^m_\omega C_T$, $L^q_t L^p_x$ and so on. Whenever $q=p$, we might write for simplicity $L^p_{t,x}$ in place of $L^p_t L^p_x$.
%

\section{Stability estimates}\label{sec:stability}

\subsection{Preliminaries}\label{subsec:preliminaries}

We shortly recall here several facts related to the SDE \eqref{eq:sde} which will be needed in the sequel, including Girsanov theorem, Zvonkin transform and Khasminskii type estimates.
They are by now very classical tools in the study of singular SDEs, see for instance \cite{XXZZ2020} for a nice self-contained presentation (in a slightly different setting).

We start by recalling that any solution $X$ can be regarded after a change of measure as a solution to a driftless SDE, thanks to Girsanov theorem.

\begin{lemma}\label{lem:girsanov}
Let $(b,\sigma)$ satisfy \cref{main-ass} and let $X$ be the unique solution to \eqref{eq:sde}. Then there exists another measure $\mathbb{Q}$, equivalent to $\mathbb{P}$, and a $\mathbb{Q}$-Brownian motion $\tilde{W}$ such that $X$ solves the SDE $\dd X_t = \sigma_t(X_t)\dd \tilde{W}_t$. Furthermore it holds
\begin{align*}
\mathbb{E}_{\mathbb{P}} \bigg[ \Big( \frac{\dd \mathbb{Q}}{\dd \mathbb{P}}\Big)^n \bigg]<\infty\quad \forall\, n\in \mathbb{Z}.
\end{align*}
\end{lemma}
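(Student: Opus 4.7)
The plan is to construct the measure change explicitly through the Dol\'eans-Dade exponential and to verify all the required moment bounds via a Khasminskii-type argument. Specifically, I would set $\xi_s := \sigma_s^{-1}(X_s)\, b_s(X_s)$, which is well-defined since $(\mathcal{H}^\sigma_1)$ yields $|\sigma^{-1}|\leq K^{1/2}$ and hence $|\xi_s|\leq K^{1/2}|b_s(X_s)|$; then define
\[
D_t := \exp\Big(-\int_0^t \xi_s \cdot \dd W_s - \tfrac{1}{2}\int_0^t |\xi_s|^2 \dd s\Big),
\]
and eventually set $\dd\mathbb{Q}/\dd\mathbb{P} := D_T$. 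Once $D$ is shown to be a genuine $\mathbb{P}$-martingale with finite $\mathbb{P}$-moments of every integer order, Girsanov's theorem will produce a $\mathbb{Q}$-Brownian motion $\tilde W_t = W_t + \int_0^t \xi_s \dd s$, and substituting into \eqref{eq:sde} will give the sought $\dd X_t = \sigma_t(X_t)\dd\tilde W_t$.

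The heart of the argument is the exponential integrability
\[
\mathbb{E}_{\mathbb{P}}\Big[\exp\Big(\lambda \int_0^T |b_s(X_s)|^2 \dd s\Big)\Big] < \infty \qquad \forall\,\lambda > 0.
\]
I would derive this from the standard Krylov-type bound for the strong solution $X$, namely
\[
\mathbb{E}_{\mathbb{P}}\Big[\int_u^v g_s(X_s)\,\dd s\,\Big|\,\mathcal{F}_u\Big] \leq C\|g\|_{L^{\bar q}_t L^{\bar p}_x}(v-u)^{\alpha},
\]
valid for every $(\bar p,\bar q)\in\mathcal{J}_0$ and some $\alpha=\alpha(\bar p,\bar q,d)>0$; such estimates are classical under \cref{main-ass} and can be quoted from \cite{XXZZ2020}. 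The crucial algebraic observation is that $(p_1,q_1)\in\mathcal{J}_1$ (i.e.\ $2/q_1+d/p_1<1$ with $p_1,q_1>2$) is exactly equivalent to $(p_1/2,q_1/2)\in\mathcal{J}_0$; applying the Krylov bound to $g=|b|^2\in L^{q_1/2}_t L^{p_1/2}_x$ yields the smallness on short intervals required by Khasminskii's lemma, which in turn upgrades it to finite exponential moments at any rate $\lambda$.

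From there the moment bounds on $D_T^n$ would follow routinely. For $n\in\mathbb{N}$ I would decompose
\[
D_T^n = \mathcal{E}\Big(-n\!\int_0^{\cdot}\xi_s\cdot\dd W_s\Big)_T \cdot \exp\Big(\tfrac{n^2-n}{2}\!\int_0^T |\xi_s|^2 \dd s\Big)
\]
and apply Cauchy-Schwarz: the first factor has expectation $1$ (Novikov being an immediate consequence of the Khasminskii bound applied with doubled constants) while the second factor has finite expectation by the same Khasminskii bound at rate $n^2-n$. Negative integer powers $n = -m$ with $m>0$ are handled analogously after flipping signs, with both resulting factors remaining integrable because the Khasminskii estimate holds at every rate.

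The step I expect to be the main obstacle is the Krylov-to-Khasminskii passage, specifically the production of a strictly positive time exponent $\alpha>0$ in the Krylov bound. This is exactly where the strict inequality built into $\mathcal{J}_0$ is exploited, and where the choice to impose $(p_1,q_1)\in\mathcal{J}_1$ (rather than the weaker $\mathcal{J}_0$) in $(\mathcal{H}^b)$ pays off. Once this step is in place, the remainder is a mechanical assembly of Girsanov's theorem and the Dol\'eans-Dade calculus, essentially as already carried out in \cite{XXZZ2020}.
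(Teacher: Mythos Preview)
The paper does not actually prove \cref{lem:girsanov}; it is stated in the preliminaries as a classical fact, with an implicit pointer to \cite{XXZZ2020} for details. Your sketch is the standard argument and is essentially correct: the key point is precisely that $(p_1,q_1)\in\mathcal{J}_1$ forces $(p_1/2,q_1/2)\in\mathcal{J}_0$, so $|b|^2$ falls within the range of the Krylov--Khasminskii estimate (this is exactly the content of \cref{lem:khasminskii} applied to $f=|b|^2$), which delivers Novikov's condition and all exponential moments of $\int_0^T|\xi_s|^2\,\dd s$.

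One small point of care: after your decomposition $D_T^n=\mathcal{E}(-n\!\int\xi\,\dd W)_T\cdot\exp\big(\tfrac{n^2-n}{2}\int|\xi|^2\big)$, a direct Cauchy--Schwarz does \emph{not} reduce the first factor to something with expectation $1$, since $\mathcal{E}(-n\!\int\xi\,\dd W)_T^2$ is not itself a Dol\'eans--Dade exponential. The clean way is to write instead
\[
D_T^n=\exp\Big(nM_T-n^2\langle M\rangle_T\Big)\exp\Big(\big(n^2-\tfrac{n}{2}\big)\langle M\rangle_T\Big),\quad M_t:=-\int_0^t\xi_s\cdot\dd W_s,
\]
and then apply Cauchy--Schwarz so that the squared first factor becomes $\mathcal{E}(2nM)_T$, which has expectation $\leq 1$ by Novikov. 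The same rewriting handles the negative powers. This is a cosmetic fix; the substance of your argument is fine and matches what one finds in \cite{XXZZ2020}.
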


The next classical ingredient we need are suitable estimates for parabolic PDEs of the form \eqref{eq:zvonkin-pde}, strictly related to the so called \textit{Zvonkin transform}.
In the next statement, we are adopting the notations $a=\sigma \sigma^\ast$ and $a:D^2 f = \sum_{i,j} a_{ij} \partial^2_{ij} f$, $b\cdot \nabla f=\sum_{i}b_i\partial_if$.

\begin{lemma}\label{lem:zvonkin}
Let $(b,\sigma)$ satisfy \cref{main-ass}. Then there exists $\lambda_0\geq 1$, depending on $T,d,K,h,p_1,q_1$ and $\| b\|_{L^{q_1}_t L^{p_1}_x}$, such that for all $\lambda\geq \lambda_0$ there is a unique solution $u\in L^{q_1}_t W^{2,{p_1}}_x \cap W^{1,{q_1}}_t L^{p_1}_x$ to the equation
\begin{equation}\label{eq:zvonkin-pde}
\partial_t u + \frac{1}{2}a:D^2 u + b\cdot\nabla u -\lambda u= -b, \quad u\vert_{t=T}=0.
\end{equation}
Furthermore there exists $\eps>0$, depending on $d,p_1,q_1$, such that
\begin{equation}\label{eq:zvonkin-transfor-estim}
\lambda^\eps \| u\|_{L^\infty_t C^1_x} + \| \partial_t u\|_{L^{q_1}_t L^{p_1}_x} + \| u\|_{L^{q_1}_t W^{2,{p_1}}_x} \lesssim \| b\|_{L^{q_1}_t L^{p_1}_x}.
\end{equation}
\end{lemma}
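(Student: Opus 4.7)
The plan is to treat \eqref{eq:zvonkin-pde} as a perturbation of the heat-type equation with only the second-order term and the $\lambda$-shift, and run a contraction argument in $L^\infty_t C^1_x$. The condition $(p_1,q_1)\in \mathcal J_1$ (i.e. $2/q_1+d/p_1<1$) is precisely what allows one to pick up a favorable negative power of $\lambda$ from the parabolic smoothing.

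First, I would consider the linear backward problem without drift,
\[
\partial_t v + \tfrac{1}{2}a:D^2 v - \lambda v = -g,\qquad v\vert_{t=T}=0,
\]
for $g\in L^{q_1}_tL^{p_1}_x$. Under \cref{main-ass}, $a=\sigma\sigma^\ast$ is bounded, uniformly elliptic and uniformly continuous in $x$; classical maximal $L^q_tL^p_x$-regularity for non-divergence-form parabolic operators (Krylov/Lorenzi--type, as in \cite{XXZZ2020}) yields a unique solution $v=T_\lambda g\in L^{q_1}_tW^{2,p_1}_x\cap W^{1,q_1}_tL^{p_1}_x$ satisfying
\[
\|\partial_t v\|_{L^{q_1}_tL^{p_1}_x}+\|v\|_{L^{q_1}_tW^{2,p_1}_x}
\lesssim \|g\|_{L^{q_1}_tL^{p_1}_x},
\]
with constant independent of $\lambda\geq 1$. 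The key additional input is a regularization estimate of the form
\[
\lambda^{\eps}\|T_\lambda g\|_{L^\infty_t C^1_x}\lesssim \|g\|_{L^{q_1}_tL^{p_1}_x},
\]
with $\eps=\tfrac{1}{q_1'}-\tfrac12-\tfrac{d}{2p_1}>0$. Freezing coefficients and using the Duhamel representation with the heat semigroup, one controls $\nabla v(t)$ by
$\int_t^T e^{-\lambda(s-t)}(s-t)^{-1/2-d/(2p_1)}\|g(s)\|_{L^{p_1}}\,\dd s$, and Hölder in $s$ plus the Gamma-function identity $\int_0^\infty e^{-\lambda r}r^{-\alpha}\dd r=\lambda^{\alpha-1}\Gamma(1-\alpha)$ produces the power $\lambda^{-\eps}$; a standard perturbation/freezing argument extends this from constant to merely continuous $a$.

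Second, I would solve the full equation \eqref{eq:zvonkin-pde} as a fixed point of the map $\Phi(u):=T_\lambda[\,b+b\cdot\nabla u\,]$ on $L^\infty_tC^1_x$. Combining the two bounds above,
\[
\|\Phi(u_1)-\Phi(u_2)\|_{L^\infty_tC^1_x}
\lesssim \lambda^{-\eps}\|b\|_{L^{q_1}_tL^{p_1}_x}\|\nabla u_1-\nabla u_2\|_{L^\infty_{t,x}},
\]
so choosing $\lambda_0$ large enough (depending on $T,d,K,h,p_1,q_1$ and $\|b\|_{L^{q_1}_tL^{p_1}_x}$) makes $\Phi$ a contraction for every $\lambda\geq \lambda_0$, yielding a unique fixed point $u\in L^\infty_tC^1_x$. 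The $\lambda^{-\eps}$ gain then gives $\lambda^\eps\|u\|_{L^\infty_tC^1_x}\lesssim \|b\|_{L^{q_1}_tL^{p_1}_x}$, and feeding this back into the $L^{q_1}_tL^{p_1}_x$ maximal regularity for $T_\lambda$ (with source $b+b\cdot\nabla u$, noting $\|b\cdot\nabla u\|_{L^{q_1}_tL^{p_1}_x}\le \|b\|_{L^{q_1}_tL^{p_1}_x}\|\nabla u\|_{L^\infty_{t,x}}$) produces the claimed bound on $\|\partial_t u\|_{L^{q_1}_tL^{p_1}_x}+\|u\|_{L^{q_1}_tW^{2,p_1}_x}$.

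The main obstacle, in my view, is the derivation of the sharp $\lambda^\eps$-gain in $L^\infty_tC^1_x$ for variable, non-smooth coefficients $a$: one cannot simply use the heat semigroup, and must either localize via a partition of unity and freeze coefficients (using $(\mathcal{H}^\sigma_2)$ to make the error small) or invoke off-diagonal estimates for the parabolic Green's function. The subcriticality condition $2/q_1+d/p_1<1$ is exactly what keeps $\eps>0$; once it holds, the rest of the scheme (maximal regularity plus Banach fixed point) is standard.
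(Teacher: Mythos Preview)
Your argument is correct and follows the standard route for proving such Zvonkin-type PDE estimates: maximal $L^{q_1}_tL^{p_1}_x$-regularity for the driftless problem, a parabolic smoothing estimate yielding the $\lambda^{-\eps}$ gain in $L^\infty_t C^1_x$, and a Banach fixed point to absorb the first-order term $b\cdot\nabla u$. Your computation $\eps=\tfrac12(1-2/q_1-d/p_1)>0$ is the natural exponent coming from the Duhamel--H\"older argument.

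The paper, by contrast, does not reprove any of this. It simply observes that the statement is a special case of \cite[Theorem 3.2]{XXZZ2020}: after time reversal to a forward equation, that theorem directly gives $\lambda^\eps\|u\|_{L^\infty_t W^{\alpha,p'}_x}\lesssim \|b\|_{L^{q_1}_tL^{p_1}_x}$ for suitable $\alpha>1$ and $p'$ large, and the $C^1_x$ bound then follows from the Sobolev embedding $W^{\alpha,p'}_x\hookrightarrow C^{\alpha-d/p'}_x\hookrightarrow C^1_x$. In other words, the paper outsources precisely the step you flag as the main obstacle (the $\lambda^\eps$ gain for variable, merely continuous $a$) to the cited reference, whereas you sketch how that reference itself is proved. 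Your approach is self-contained and makes the mechanism transparent; the paper's is a two-line citation. Both arrive at the same conclusion, and your sketch is essentially a condensed version of what \cite{XXZZ2020} does.
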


\begin{proof}
The result is a special case of \cite[Theorem 3.2]{XXZZ2020}; indeed, by  time reversal we can reduce ourselves to the case of a forward parabolic equation with $u\vert_{t=0}=0$ as therein.
Let $\eps>0$ be a sufficiently small parameter satisfying $2/q_1+d/p_1>1-4\eps$ and choose $p'\geq p$ large enough so that $d/p'=2\eps$; setting $q'=\infty$ and $\alpha=1+4\eps$, we can apply the result from \cite{XXZZ2020} to get an estimate for $\lambda^\eps \| u\|_{L^\infty_t W^{\alpha,p'}_x}$. The embedding $ W^{\alpha,p'}_x \hookrightarrow C^{\alpha-d/p'}_x \hookrightarrow C^1_x$ finally allows to conclude.
\end{proof}

The next lemma, which allows to control integrals of the form $\int_0^T f_r(X_r)$ for $f$ merely satisfying $L^q_t L^p_x$-integrability, is also classical; its proof comes from a combination of so called \textit{Krylov's estimates} and \textit{Khasminskii's lemma}.

\begin{lemma}\label{lem:khasminskii}
Let $(b,\sigma)$ satisfy \cref{main-ass}, $X$ be a solution to \eqref{eq:sde} and $(\tilde p,\tilde q)\in \mathcal{J}_0$.
Then there exists a constant $C>0$ (depending on $T$, $d$, $K$, $h$, $p_1$, $q_1$, $\| b\|_{L^{q_1}_t L^{p_1}_x}$, $\tilde p$, $\tilde q$) such that
\begin{equation}\label{eq:khasminskii}
\mE\bigg[ \exp\Big(\lambda \int_0^T f_r(X_r)\dd r\Big) \bigg] \leq C \exp\Big(C \lambda^{\tilde q} \| f\|_{L^{\tilde q}_t L^{\tilde p}_x}^{\tilde q}\Big) \quad \forall\, \lambda >0,\ f\in L^{\tilde q}_t L^{\tilde p}_x.
\end{equation}
\end{lemma}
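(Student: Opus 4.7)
The plan is to follow the standard three-step reduction: first remove the drift via Girsanov (\cref{lem:girsanov}), then apply a Krylov-type occupation estimate to the resulting driftless SDE, and finally conclude via a Khasminskii-type iteration over a suitable partition of $[0,T]$.

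\emph{Girsanov reduction.} I would invoke \cref{lem:girsanov} to pass to the equivalent measure $\mQ$ under which $X$ satisfies the driftless SDE $\dd X_t=\sigma_t(X_t)\dd \tilde W_t$, using that the density $\rho=\dd\mQ/\dd\mP$ has finite moments of all integer orders. Since $\exp(\lambda\int f_r(X_r)\dd r)\le\exp(\lambda\int |f_r|(X_r)\dd r)$, one may assume $f\ge 0$. Applying Hölder with conjugate exponents $(n,n')$ to the identity $\mathbb{E}_{\mP}[\,\cdot\,]=\mathbb{E}_{\mQ}[\rho^{-1}\,\cdot\,]$ yields
\[
\mathbb{E}_{\mP}\!\Big[e^{\lambda\int_0^T f_r(X_r)\dd r}\Big]\le \mathbb{E}_{\mQ}[\rho^{-n'}]^{1/n'}\,\mathbb{E}_{\mQ}\!\Big[e^{n\lambda\int_0^T f_r(X_r)\dd r}\Big]^{1/n},
\]
and the prefactor is finite by \cref{lem:girsanov}, so the problem reduces to proving the analogue of \eqref{eq:khasminskii} under $\mQ$ with $\lambda$ replaced by $n\lambda$ (which only rescales the exponential constant).

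\emph{Krylov and Khasminskii under $\mQ$.} For the driftless SDE, uniform boundedness and ellipticity of $\sigma$ from $(\mathcal{H}^\sigma_1)$ suffice to invoke the classical Krylov estimate (see e.g.~\cite{XXZZ2020}): for every $(\tilde p,\tilde q)\in\mathcal{J}_0$, every $0\le s\le t\le T$, and nonnegative $f\in L^{\tilde q}_tL^{\tilde p}_x$,
\[
\mathbb{E}_{\mQ}\!\Big[\int_s^t f_r(X_r)\dd r\,\Big|\,\mathcal{F}_s\Big]\le C_0\,\phi(s,t)^{1/\tilde q},\qquad \phi(s,t):=\int_s^t \|f_r\|^{\tilde q}_{L^{\tilde p}_x}\dd r.
\]
I would then set $A_t:=\lambda\int_0^t f_r(X_r)\dd r$ and $\varepsilon:=(2C_0\lambda)^{-\tilde q}$, and greedily partition $[0,T]$ into consecutive intervals $[t_{k-1},t_k]$, $k=1,\ldots,N$, each satisfying $\phi(t_{k-1},t_k)\le\varepsilon$; additivity of $\phi$ with total mass $\|f\|^{\tilde q}_{L^{\tilde q}_tL^{\tilde p}_x}$ forces $N\le 1+(2C_0)^{\tilde q}\lambda^{\tilde q}\|f\|^{\tilde q}_{L^{\tilde q}_tL^{\tilde p}_x}$. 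On each piece $\sup_{t\in[t_{k-1},t_k]}\mathbb{E}_{\mQ}[A_{t_k}-A_t|\mathcal{F}_t]\le 1/2$, so the classical Khasminskii lemma (proved by inducting $\mathbb{E}[(A_T-A_t)^k|\mathcal{F}_t]\le k!\,2^{-k}$, whence $\mathbb{E}[e^{A_T-A_0}]\le 2$) gives $\mathbb{E}_{\mQ}[e^{A_{t_k}-A_{t_{k-1}}}|\mathcal{F}_{t_{k-1}}]\le 2$. Iterating via the tower property yields $\mathbb{E}_{\mQ}[e^{A_T}]\le 2^N$, which is precisely of the claimed form.

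\emph{Main obstacle.} This is essentially a textbook chain of reductions, and I do not anticipate a serious technical obstacle. The most delicate point is to exploit the fact that the Krylov bound is available in the full subcritical range $\mathcal{J}_0$ (rather than the narrower $\mathcal{J}_1$ used for well-posedness in \cref{lem:zvonkin}); this is exactly what produces the sharp power $\lambda^{\tilde q}$ in the exponent. The Girsanov/Hölder step loses only an innocuous multiplicative constant, since the conjugate pair $(n,n')$ can be fixed once and for all.
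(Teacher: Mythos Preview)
Your proof is correct and reaches the same conclusion, but the paper's argument is shorter and more direct. Rather than first removing the drift via \cref{lem:girsanov}, the paper invokes the Krylov estimate \cite[Theorem 1.1 (A)]{XXZZ2020} directly for the SDE \eqref{eq:sde} with drift present: for any $s<t$,
\[
\mE\Big[\int_s^t f_r(X_r)\dd r\,\Big|\,\mathcal{F}_s\Big]\lesssim \|f\|_{L^{\tilde q}(s,t;L^{\tilde p}_x)}=:w(s,t)^{1/\tilde q},
\]
and then applies the quantitative Khasminskii lemma \cite[Lemma 3.5]{le2021taming} with $\gamma=1/\tilde q$ to conclude. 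This avoids your H\"older--Girsanov step (and the associated blow-up of $\lambda$ by a factor $n$), at the cost of citing a slightly heavier external result. Your route is more self-contained in the Khasminskii iteration, which you spell out explicitly, whereas the paper outsources it; conversely your Girsanov detour is harmless but unnecessary.

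One minor inaccuracy: you write that ``uniform boundedness and ellipticity of $\sigma$ from $(\mathcal{H}^\sigma_1)$ suffice'' for the Krylov estimate in the full range $\mathcal{J}_0$. Strictly speaking, to reach exponents with $2/\tilde q+d/\tilde p<2$ one also needs the uniform continuity of $a=\sigma\sigma^\ast$ coming from $(\mathcal{H}^\sigma_2)$; with merely measurable elliptic $\sigma$ the Krylov bound is only known in the narrower range $\mathcal{J}_1$. Since \cref{main-ass} is in force this does not affect your argument, but the dependence of the constant on the modulus $h$ (which appears in the statement of the lemma) enters precisely here.
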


\begin{proof}
By \cite[Theorem 1.1 (A)]{XXZZ2020}, for any $s<t$ it hold 
\[ \mE\bigg[\int_s^t f(X_r)\dd r \Big\vert \mathcal{F}_s\bigg] \lesssim \| f\|_{L^{\tilde q}(s,t;L^{\tilde p}_x)}=: w(s,t)^{1/{\tilde q}},\]
where $w$ is a continuous control; estimate \eqref{eq:khasminskii} then follows from an application of the quantitative version of Khasminskii's lemma from \cite[Lemma 3.5]{le2021taming}, for the choice $\gamma=1/{\tilde q}$.
\end{proof}

Our next ingredient will be an alternative estimate for additive functional $\int_0^t f_r(X_r) \dd r$ in the case where $f$ only enjoys negative regularity; this result is less classical and improves on the one from \cite[Proposition 6.6]{le2021taming}.

\begin{lemma}\label{lemm-no-drift}
Let $b,\,\sigma$ satisfy \cref{main-ass} with $q_2=\infty$, $X$ be a solution to \eqref{eq:sde} and $(\tilde p, \tilde q)\in \mathcal{J}_1$.
Then there exists a constant $C>0$ (depending on $T$, $d$, $K$, $h$, $p_1$, $q_1$, $p_2$, $\| b\|_{L^{q_1}_t L^{p_1}_x}$, $\| \nabla \sigma\|_{L^\infty_t L^{p_2}_x}$, $\tilde p$, $\tilde q$) such that for all smooth function $f$ it holds
\begin{equation}\label{no-drift-distribution}
    \mE\bigg[ \exp\Big(\lambda  \sup_{t\in [0,T]} \Big|\int_0^t f_r(X_r)\dd r\Big| \Big)\bigg] \leq C \exp\Big(C \lambda^{\tilde q} \| f\|_{L^{\tilde q}_t W^{-1,\tilde p}_x}^{\tilde q}\Big) \quad \forall\, \lambda>0.
\end{equation}
By linearity and density, we can extend the definition of $\int_0^\cdot f_r(X_r)\dd r$ as a continuous process to any $f\in L^{\tilde q}_t W^{-1,\tilde p}_x$, in which case estimate \eqref{no-drift-distribution} still holds.
\end{lemma}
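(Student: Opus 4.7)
The plan is to combine a Zvonkin-type PDE reduction with Itô's formula and then invoke \cref{lem:khasminskii} applied to $|\nabla u|^2$. First assume $f$ is smooth and compactly supported, the extension to general $f\in L^{\tilde q}_t W^{-1,\tilde p}_x$ being postponed. For a sufficiently large $\lambda\ge 1$, let $u$ solve the backward Kolmogorov PDE
\[
\partial_t u+\tfrac12 a:D^2 u+b\cdot\nabla u-\lambda u=-f,\qquad u\vert_{t=T}=0.
\]
Arguing as in \cref{lem:zvonkin} but using the parabolic regularity result of \cite{XXZZ2020} with right-hand side in $L^{\tilde q}_t W^{-1,\tilde p}_x$, one obtains $u\in L^{\tilde q}_t W^{1,\tilde p}_x\cap W^{1,\tilde q}_t W^{-1,\tilde p}_x$; the strict subcriticality $\tfrac{2}{\tilde q}+\tfrac{d}{\tilde p}<1$ (which in particular forces $\tilde p>d$) then upgrades this via the parabolic Sobolev embedding to
\[
\|u\|_{L^\infty_{t,x}}+\|\nabla u\|_{L^{\tilde q}_t L^{\tilde p}_x}\lesssim \|f\|_{L^{\tilde q}_t W^{-1,\tilde p}_x}.
\]
The hypothesis $q_2=\infty$ provides enough time regularity of $a=\sigma\sigma^\ast$ for this PDE theory to apply.

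An application of the generalized Itô formula to $u(t,X_t)$ (valid for smooth $f$ by the usual Zvonkin approximation argument), together with the PDE to eliminate $\partial_t u+\tfrac12 a:D^2 u+b\cdot\nabla u$ in favour of $\lambda u-f$, gives after rearrangement
\[
\int_0^t f(r,X_r)\dd r=u(0,X_0)-u(t,X_t)+\lambda\int_0^t u(r,X_r)\dd r+M_t,\qquad M_t:=\int_0^t \sigma^\ast\nabla u(r,X_r)\cdot \dd W_r.
\]
The first three summands are uniformly bounded by $(2+\lambda T)\|u\|_{L^\infty_{t,x}}$, while the exponential moment of $\sup_t|M_t|$ can be handled by a Cauchy--Schwarz against the exponential supermartingale $\exp(2\mu M_t-2\mu^2\langle M\rangle_t)$ together with Doob's $L^2$-inequality for the submartingale $e^{\mu M_t}$, yielding
\[
\mE\big[\exp(\mu\sup_{t\in[0,T]}|M_t|)\big]\le C\,\mE\big[\exp(C\mu^2\langle M\rangle_T)\big]^{1/2}.
\]
Since $\langle M\rangle_T\le K\int_0^T|\nabla u|^2\dd r$ and since $(\tilde p,\tilde q)\in\mathcal J_1$ readily gives $(\tilde p/2,\tilde q/2)\in\mathcal J_0$, \cref{lem:khasminskii} applied to $|\nabla u|^2\in L^{\tilde q/2}_t L^{\tilde p/2}_x$ bounds the latter by $C\exp(C\mu^{\tilde q}\|\nabla u\|_{L^{\tilde q}_t L^{\tilde p}_x}^{\tilde q})\le C\exp(C\mu^{\tilde q}\|f\|^{\tilde q})$.

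Combining the deterministic and martingale contributions and absorbing the linear-in-$\mu$ term via $\mu\|f\|\le 1+\mu^{\tilde q}\|f\|^{\tilde q}$ (valid since $\tilde q>1$) yields \eqref{no-drift-distribution} for smooth $f$. For the extension to $f\in L^{\tilde q}_t W^{-1,\tilde p}_x$, Markov's inequality applied to \eqref{no-drift-distribution} gives in particular
\[
\Big\|\sup_{t\in[0,T]}\Big|\int_0^t (f^1-f^2)(r,X_r)\dd r\Big|\Big\|_{L^m_\omega}\lesssim_m\|f^1-f^2\|_{L^{\tilde q}_t W^{-1,\tilde p}_x}
\]
for every $m\in[1,\infty)$; applied along a mollifying sequence $f^n\to f$, this makes the additive functionals Cauchy in $L^m_\omega C_T$, and the limit provides a continuous process representing $\int_0^\cdot f(r,X_r)\dd r$ and inheriting the exponential estimate. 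The main obstacle is the PDE $L^\infty_{t,x}$ bound on $u$ with a right-hand side of negative spatial regularity; the remainder is a standard blend of the Zvonkin--Itô trick, exponential martingale inequalities, and \cref{lem:khasminskii}.
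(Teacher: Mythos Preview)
Your overall strategy --- PDE reduction, It\^o's formula, exponential-martingale control of the stochastic integral, then \cref{lem:khasminskii} applied to the square of the gradient --- is exactly the paper's. The one substantive difference is that the paper first invokes Girsanov (\cref{lem:girsanov}) to reduce to the driftless case $b\equiv 0$, and only then solves the \emph{simpler} backward PDE
\[
\partial_t v+\tfrac12\, a:D^2 v=f,\qquad v\vert_{t=T}=0,
\]
with no first-order term and no $\lambda$; the required estimate $\|v\|_{L^\infty_{t,x}}+\|\nabla v\|_{L^{\tilde q}_t L^{\tilde p}_x}\lesssim\|f\|_{L^{\tilde q}_t W^{-1,\tilde p}_x}$ is then quoted from \cite[Theorem~A.4]{le2021taming}, not from \cite{XXZZ2020}.

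This difference is not merely cosmetic. Your PDE retains the transport term $b\cdot\nabla u$, and with $b\in L^{q_1}_t L^{p_1}_x$ and the expected regularity $\nabla u\in L^{\tilde q}_t L^{\tilde p}_x$, the product lands only in $L^{q'}_t L^{p'}_x$ with $1/q'=1/q_1+1/\tilde q$; since $q'<\tilde q$, a fixed-point or perturbative argument in $L^{\tilde q}_t W^{1,\tilde p}_x$ does not close. Moreover the PDE results in \cite{XXZZ2020} treat forcing in $L^q_t L^p_x$ and return solutions in $L^q_t W^{2,p}_x$; they do not supply the negative-Sobolev estimate you invoke. The Girsanov reduction is precisely what sidesteps this: once $b$ is removed, only the principal part remains, and the $W^{-1,\tilde p}$ parabolic theory for that operator (under the hypothesis $q_2=\infty$, whose role you correctly identified) is available off the shelf. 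Your diagnosis that the $L^\infty_{t,x}$ bound on the PDE solution is the crux is right, but the route you sketch to it does not go through as written; either insert the Girsanov step at the outset, or supply a separate argument for the full equation with singular drift and distributional forcing.
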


\begin{proof}
First observe that, thanks to \cref{lem:girsanov}, we can assume without loss of generality $b\equiv 0$; we can also assume $f$ to be $\R$-valued, otherwise we can argue componentwise, and by homogeneity we can take $\lambda=1$.
Given a smooth $f$, denote by $v$ the solution to the parabolic PDE
\begin{align*}
   \partial_t v+\frac{1}{2} a : D^2 v =f, \quad v\vert_{t=T}=0.
\end{align*}
Since $\dd X_t = \sigma_t(X_t)\dd W_t$, 
by It\^o's formula we get
\begin{align*}
    \int_0^t f_r(X_r)\dd r= v_t(X_t)-v_0(x)-\int_0^t(\sigma^\ast \nabla v)_r(X_r)\cdot \dd W_r.
\end{align*}
On the other hand, thanks to our assumptions and \cite[Theorem A.4]{le2021taming} (together with some standard interpolation estimates), it holds
\begin{equation}\label{eq:pde-distributional-forcing}
\| v\|_{L^{\tilde q}_t W^{1,\tilde p}_x} + \| \partial_t v\|_{L^{\tilde q}_t W^{-1,\tilde p}_x} + \| v\|_{L^\infty_{t,x}} \lesssim \| f\|_{L^{\tilde q}_t W^{-1,\tilde p}_x}
\end{equation}
Therefore we have the estimate
\begin{align*}
\mE\bigg[ \exp\Big( \sup_{t\in [0,T]} \Big|\int_0^t f_r(X_r)\dd r\Big| \Big)\bigg]
\leq e^{2 \| v\|_{L_{t,x}^\infty}} \mE\bigg[ \exp\Big(  \sup_{t\in [0,T]} \Big|\int_0^t (\sigma^\ast \nabla v)_r(X_r)\cdot \dd W_r\Big| \Big)\bigg];
\end{align*}
using Doob's inequality for the submartingale $M_t := \exp\big(\lambda \int_0^t (\sigma^\ast \nabla v)_r(X_r)\cdot \dd W_r\big)$, as well as the basic inequality $e^{|x|} \leq e^x + e^{-x}$, we can find a constant $\kappa>0$ such that
\begin{align*}
\mE\Big[ \sup_{t\in [0,T]} |M_t|\Big] \leq I^+ + I^-,\quad I^\pm := \mE\bigg[ \exp\Big(\pm \kappa  \int_0^T (\sigma^\ast \nabla v)_r(X_r)\cdot \dd W_r \Big)\bigg].
\end{align*}

Let us show how to estimate $I^+$, the other term being treated similarly.
By estimate \eqref{eq:pde-distributional-forcing}, $ \nabla v\in L^{\tilde q}_t L^{\tilde p}_x$, thus $|\sigma^\ast \nabla v|^2\in L^{\tilde q/2}_t L^{\tilde p/2}_x$ where $(\tilde q/2, \tilde p/2)\in \mathcal{J}_0$. In particular  \cref{lem:khasminskii} applies to $f=|\sigma^\ast \nabla v|^2$; but then by Girsanov's theorem and Novikov's condition, this implies that
\[
N_t := \exp\Big(2\kappa \int_0^t (\sigma^\ast \nabla v)_r(X_r)\cdot \dd W_r - 2\kappa^2 \int_0^t |(\sigma^\ast\nabla v)_r(X_r)|^2 \dd t\Big)
\]
is a martingale, which together with H\"older's inequality yields
\begin{align*}
I^+ & \leq \mE[N_t]^{1/2}\, \mE\bigg[\exp\Big(2\kappa^2 \int_0^T |(\sigma^\ast \nabla v)_r(X_r)|^2 \dd r\Big)\bigg]^{1/2}
\lesssim C \exp\Big (C\, \| |\sigma^\ast \nabla v|^2 \|_{L^{\tilde q/2}_t L^{\tilde p/2}_x}^{\tilde q/2}\Big).
\end{align*}
Combining all of the above with the estimate
\[ \| |\sigma^\ast \nabla v|^2 \|_{L^{\tilde q/2}_t L^{\tilde p/2}_x}
\lesssim \| \nabla v\|_{L^{\tilde q}_t L^{\tilde p}_x}
\lesssim \| f\|_{L^{\tilde q}_t W^{-1,\tilde p}_x},\]
where the second passage comes \eqref{eq:pde-distributional-forcing}, and relabelling the constant $C$ to include $\kappa$, we finally obtain the desired \eqref{no-drift-distribution}.
\end{proof}

\begin{remark}
The realisation that suitable diffusion processes can be integrated along distributions goes back at least to the works of Bass and Chen \cite{bass2003brownian}, where the case of measures in a suitable Kato class is considered. Among more recent developments let us mention \cite{flandoli2017multidimensional} and \cite[Theorem 5.1]{zhang2017heat}, where autonomous $f\in W^{-\alpha,p}_x$ are allowed for $\alpha \leq 1/2$, $p>d/(1-\alpha)$. As already mentioned, \cref{lemm-no-drift} is an improvement over \cite{le2021taming}, as it allows $\nu=1$ without further assumptions of $f$ and reaches exponential integrability.

Among other applications of \cref{lemm-no-drift} of independent interest, let us mention that for the choice $f=\div b\in L^{q_1}_t W^{-1,p_1}_x$ it yields uniform $L^m_\omega$ estimates for the Jacobian $J\phi_t(x)=\det D_x\phi_t(x)$ of the flow $\phi$ associated to the SDE \eqref{eq:sde}, which is formally given by the formula
\[
J\phi_t (x) = \exp\Big(\int_0^t \div b_r (X^x_r) \dd r\Big).
\]
Compare this fact with \cite[Section 3]{flandoli2010well}, the additional assumption $\div b\in L^p$, $p>2$ is imposed in order to analyse such term and derive regularity estimates.
\end{remark}

\begin{remark}\label{rem:interpolation-khasminskii}
Interpolating between $L^{\tilde q_0}_t L^{\tilde p_0}_x$ and $L^{\tilde q_1}_t W^{-1,\tilde p_1}_x$ for $(\tilde q_0,\tilde p_0)\in\mathcal{J}_0$, $(\tilde q_1,\tilde p_1)\in\mathcal{J}_1$ and applying Lemmas \ref{lem:khasminskii}-\ref{lemm-no-drift}, one can in fact prove a more general statement: for any $\beta\in [0,1]$, $(\tilde q_\beta,\tilde p_\beta)\in \mathcal{J}_\beta$ and any $f\in L^{\tilde q_\beta}_t W^{-\beta,\tilde p_\beta}_x$, the process $\int_0^\cdot f_r(X_r)\dd r$ is well defined and
\begin{equation*}
\mE\bigg[ \exp\Big(\lambda  \sup_{t\in [0,T]} \Big|\int_0^t f_r(X_r)\dd r\Big| \Big)\bigg] \leq C \exp\Big(C \lambda^{\tilde q_\beta} \| f\|_{L^{\tilde q_\beta}_t W^{-\beta,\tilde p_\beta}_x}^{\tilde q_\beta}\Big) \quad \forall\, \lambda>0.
\end{equation*}
\end{remark}

\begin{remark}
Estimates \eqref{eq:khasminskii}-\eqref{no-drift-distribution} immediately imply moment bounds of the form
\begin{align}\label{est:khasminskii}
  \mE\bigg[ \Big(\int_0^T f_r(X_r)\dd r\Big)^m \bigg] \lesssim  \| f\|_{L^{\tilde q}_t L^{\tilde p}_x}^m, \quad
  \mE\bigg[  \sup_{t\in [0,T]} \bigg|\int_0^t f_r(X_r)\dd r\Big| ^m\bigg] \lesssim  \| f\|_{L^{\tilde q}_t W^{-1,\tilde p}_x}^m
\end{align}
for all $m\in [1,\infty)$. For instance, to deduce the first estimate in \eqref{est:khasminskii}, it suffices to  apply \eqref{eq:khasminskii} for the choice $\lambda =1$, $\tilde{f} = f/ \| f\|_{L^{\tilde q}_t L^{\tilde p}_x}$, so that
\begin{align*}
\| f\|_{L^{\tilde q}_t L^{\tilde p}_x}^{-m}\, \mE\bigg[ \Big(\int_0^T f_r(X_r)\dd r\Big)^m \bigg]
\lesssim_m \mE\bigg[ \exp\Big( \int_0^T \tilde f_r(X_r)\dd r \Big)\bigg] \lesssim 1.
\end{align*}
\end{remark}

As a final ingredient, we need to recall an inequality involving maximal functions; the standard version, sometimes referred to as Hajlasz inequality, is classical \cite{stein1970singular}, but the one-sided one presented here seems to be of much recent derivation, cf. \cite{caravenna2021}.

\begin{lemma}\label{lem:maximal-one-sided}
Let $f$ be a measurable function such that $\nabla f=\psi^1+\psi^2$, where $\psi^1\in L^{\tilde q_i}_t L^{\tilde p_i}_x$ for some parameters $(\tilde q_1,\tilde p_1),\, (\tilde q_2,\tilde p_2)$  satisfying $\tilde q_i\in [1,\infty)$ and $\tilde p_i\in (d,\infty)$; then there exists a negligible Lebesgue set $N\subset [0,T]\times \R^d$ and another measurable function $g$ such that
\begin{equation}\label{eq:maximal-one-sided}
    |f_t(x)-f_t(y)| \leq [g^1_t(x) + g^2_t(x)] |x-y| \quad \forall\,y\in \R^d,\ \forall\, (t,x)\in [0,T]\times\R^d\setminus N;
\end{equation}
moreover there exists a constant $C=C(d,\tilde p_1,\tilde p_2)$ such that $g$ can be chosen of the form $g=g^1+g^2$ satisfying $\| g^i\|_{L^{\tilde q^i}_t L^{\tilde p^i}_x} \leq C \| \psi^i\|_{L^{\tilde q^i}_t L^{\tilde p^i}_x}$ for $i=1,2$.
\end{lemma}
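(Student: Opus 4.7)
I build $g^1$ and $g^2$ from suitably adjusted Hardy--Littlewood maximal functions of $\psi^1$ and $\psi^2$, and establish the one-sided pointwise inequality via a Riesz-potential form of Poincar\'e's inequality.

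Since $\tilde p_i > d \ge 1$, fix auxiliary exponents $s_i \in (d, \tilde p_i)$ and define
\[
g^i_t(x) := C_0 \, \big(M_x |\psi^i_t|^{s_i}(x)\big)^{1/s_i},
\]
where $M_x$ is the spatial Hardy--Littlewood maximal operator and $C_0 = C_0(d, \tilde p_1, \tilde p_2)$ will be chosen below. Since $\tilde p_i / s_i > 1$, the maximal theorem applies on $L^{\tilde p_i / s_i}_x$ and yields $\| g^i_t \|_{L^{\tilde p_i}_x} \lesssim \| \psi^i_t \|_{L^{\tilde p_i}_x}$; integrating in $t$ gives the desired norm bound.

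For the pointwise inequality, note that by Fubini, for a.e. $t$ one has $\psi^i_t \in L^{\tilde p_i}_x \subset L^{s_i}_{\mathrm{loc}}$; since $s_i > d$, Morrey's embedding lets us replace $f_t$ by its continuous representative. Fix such a $t$, a Lebesgue point $x$ of $f_t$, and any $y \neq x$; set $r := |x - y|$ and $B := B(x, 2r)$, so that $\{x, y\} \subset B$. The Riesz-potential form of Poincar\'e's inequality gives, for each $z \in \{x, y\}$ which is a Lebesgue point of $f_t$,
\[
|f_t(z) - f_{t, B}| \le C_d \int_B \frac{|\nabla f_t(w)|}{|z - w|^{d-1}} \dd w \le C_d \sum_{i=1,2} \int_B \frac{|\psi^i_t(w)|}{|z - w|^{d-1}} \dd w.
\]
Applying H\"older's inequality with exponents $s_i$ and $s_i' = s_i / (s_i - 1)$, the condition $(d-1) s_i' < d$ (equivalent to $s_i > d$) guarantees $\| |z - \cdot|^{-(d-1)} \|_{L^{s_i'}(B)} \lesssim r^{1 - d/s_i}$, while trivially $\| \psi^i_t \|_{L^{s_i}(B)} \lesssim r^{d/s_i} (M_x |\psi^i_t|^{s_i}(x))^{1/s_i}$. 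Combining and picking $C_0$ large enough yields $|f_t(z) - f_{t, B}| \le \tfrac{1}{2} r (g^1_t(x) + g^2_t(x))$; the triangle inequality closes the estimate for Lebesgue points $y$, and the general $y \in \R^d$ follows by density plus continuity of $f_t$. The exceptional set $N$ can be taken as the $(t,x)$ for which $x$ fails to be a Lebesgue point of $f_t$, unioned with a null set of $t$, and is Lebesgue-null in $[0,T] \times \R^d$ by Fubini.

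The key subtlety is producing a \emph{one-sided} bound, with $g$ evaluated only at $x$, in contrast with the classical two-sided Haj\l{}asz inequality. Choosing $s_i$ strictly in $(d, \tilde p_i)$ is essential: the upper restriction makes the maximal theorem applicable in $L^{\tilde p_i / s_i}$ (which would fail at $s_i = \tilde p_i$, since Hardy--Littlewood is unbounded on $L^1$), while the lower restriction $s_i > d$ makes the Riesz-potential integral $\int_B |z - w|^{-(d-1)} \dd w$ finite and lets one absorb all $y$-dependence into the scale $r = |x - y|$. The nonemptiness of $(d, \tilde p_i)$ is precisely the assumption $\tilde p_i \in (d, \infty)$.
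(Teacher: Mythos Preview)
Your proof is correct and follows essentially the same approach as the paper: both take $g^i = C\,[\mathcal{M}(|\psi^i|^{s})]^{1/s}$ for an exponent $s$ strictly between $d$ and $\tilde p_i$, so that the maximal theorem applies in $L^{\tilde p_i/s}$ while $s>d$ yields the one-sided pointwise bound. The only cosmetic differences are that the paper uses a single common exponent $p'\in(d,\tilde p_1\wedge\tilde p_2)$ and cites \cite{caravenna2021} for the one-sided inequality, whereas you choose a separate $s_i$ for each component and supply a self-contained derivation via the Riesz-potential Poincar\'e inequality.
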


\begin{proof}
In the case of autonomous functions $f$, the statement is a simple consequence of \cite[Lemma 5.1]{caravenna2021}; more precisely, by fixing another $p'\in (d,\tilde p_1 \wedge \tilde p_2)$ and applying the result therein, inequality \eqref{eq:maximal-one-sided} holds for $g(x)\sim [ \mathcal{M}(|\nabla f|^{p'})(x)]^{1/p'}$, where $\mathcal{M}$ denotes the Hardy--Littlewood maximal function operator.
By standard properties of $\mathcal{M}$, $g\lesssim g^1+g^2$ for $g^i(x):=\mathcal{M}(|\psi^i|^{p'})(x)]^{1/p'}$; by construction, $|\psi^i|^{p'} \in L^{\tilde p_i/p'}_x$ with $\tilde p_i /p'>1$, therefore again by properties of $\mathcal{M}$ it holds $\|\mathcal{M}(|\psi^i|^{p'})\|_{L^{p/p'}} \lesssim  \| \psi^i\|_{L^{\tilde p_i}_x}^{p'}$, from which we can deduce that $\| g^i\|_{L^{\tilde p_i}_x} \lesssim \| \psi^i\|_{L^{\tilde p_i}_x}$.

In the case of time-dependent functions $f$, one can either: i) observe that for a.e. $t\in [0,T]$, $\nabla f_t\in L^{\tilde p_1}_x + L^{\tilde p_2}_x$ and apply the result at such fixed $t$; or ii) consider a sequence $f^n$ of smooth functions such that $f^n\to f$ and they admits decompositions $\nabla f^n=\psi^{n,1}+\psi^{n,2}$ with $\psi^{n,i}\to \psi^i$ in $L^{\tilde q_i}_t L^{\tilde p_i}_x$; then to any $f^n$ there is an associated $g^n=g^{n,1}+g^{n,2}= \mathcal{M}(|\psi^{n,1}|^{p'})]^{1/p'} + \mathcal{M}(|\psi^{n,2}|^{p'})]^{1/p'}$ such that \eqref{eq:maximal-one-sided} holds and clearly $g^{n,i} \to g^i$ in $L^{\tilde q_i}_t L^{\tilde p^i}_x$, so that inequality \eqref{eq:maximal-one-sided} still holds after the limit (up to a Lebesgue negligible set). 
\end{proof}

\subsection{A general abstract estimate}\label{subsec:abstract}

We present here a fundamental estimate which is at the heart of the proof of Theorem \ref{thm:main-theorem}, as well as of several applications.
It focuses on comparing the solution $X$ to the SDE \eqref{eq:sde} associated to $(X_0, b,\sigma)$ w.r.t. to any semimartingale perturbation of it; more precisely, we will consider a process $Y$ solving
\begin{equation}\label{eq:perturbed-sde}
Y_t = Y_0 + \int_0^t [b_s(Y_s) + R^1_s] \dd s + \int_0^t [\sigma_s(Y_s) + R^2_s] \dd W_s
\end{equation}
where $R^1$, $R^2$ are predictable processes, taking values respectively in $\R^d$ and $\R^{d\times d}$.
In the following, we will enforce the following conditions on $Y$.

\begin{assumption}\label{ass:perturbation}
The following hold for any $T>0$:
\begin{itemize}
\item[i)] $\int_0^T [|b_s(Y_s)| + |R^1_s|+|\sigma_s(Y_s)|^2+|R^2_s|^2]\dd s <\infty$ $\mathbb{P}$-a.s.
\item[ii)] For any $f\in L^{\tilde q}_t L^{\tilde p}_x$ with $(\tilde{q},\tilde{p})\in \mathcal{J}_0$, it holds $\int_0^T |f_r(Y_r)| \dd r<\infty$ $\mathbb{P}$-a.s. Moreover for any sequence of smooth, bounded functions $f^\eps$ such that $f^\eps\to f$ in $L^{\tilde q}_t L^{\tilde p}_x$ as $\eps\to 0$, it holds
\begin{equation*}
\lim_{\eps\to 0} \int_0^T |f^\eps(Y_s)-f(Y_s)| \dd s = 0 \quad \mathbb{P}\text{-a.s.}
\end{equation*}
\item[iii)] The processes $Y$, $R^1$, $R^2$ are adapted to the filtration generated by $(Y_0,W,\tilde W)$, where $\tilde W$ is another $\R^m$-valued process, for some $m\in\mathbb{N}$, and  $Y_0$, $W$, $\tilde W$ are independent.
\end{itemize}
\end{assumption}

\begin{remark}
While Condition i) is natural in order to give meaning to \eqref{eq:perturbed-sde}, the others are less classical.
Point ii) will be needed in order to make sense of an It\^o-type formula for functions with only integrable second order derivatives.
Instead iii) simplifies the proof of \cref{stab-general} below by allowing to disintegrate w.r.t. the initial conditions $(X_0,Y_0)$; depending on the specific structure of the processes $R^1$, $R^2$ it might not be even needed. In the simplest cases, one can just take $\tilde W\equiv 0$, but there are also applications of interest where a nontrivial choice of $\tilde{W}$ must be considered,  as we will see in \cref{sec:applications}.
\end{remark}

Before presenting the statement, we need a few preparations. Given $(b,\sigma)$ satisfying \cref{main-ass}, by \cref{lem:zvonkin} we can choose $\lambda$ large enough so that the solution $u$ to \eqref{eq:zvonkin-pde} has Lipschitz constant smaller than $1/2$ for all $t\in [0,T]$; from now on, we will always work with such a fixed $\lambda$ and denote by $u$ the associated solution, so that the map $\phi_t(x):=x+u_t(x)$ is $2$-Lipschitz with $2$-Lipschitz inverse for all $t\in [0,T]$.
For such $u$, we define an associated process
\begin{equation}\label{eq:process-V}
V_t := \int_0^t \Big(\frac{1}{2}\big[ (R^2+\sigma)(R^2+\sigma)^\ast - \sigma \sigma^\ast\big] :D^2 u +R^1\cdot(I+\nabla u) \Big)_r(Y_r) \dd r + \int_0^t [R^2(I + \nabla u )]_r(Y_r)\dd W_r.
\end{equation}
which is well defined thanks to \cref{ass:perturbation}.

The proof of the next result shares some clear analogies with those of stochastic Gr\"onwall lemmas, cf. \cite[Lemma 3.8]{le2021taming}, as well as \cite{geiss2022concave} for a general overview; however, in order not to impose any restriction on the $m$-th moment we want to estimate, we will exploit crucially the structure of the SDE which allows us to ``recenter'' it at each step, a property which doesn't hold in the more general setting of those results.

\begin{proposition}\label{stab-general}
Let $X$ be a solution to \eqref{eq:sde} associated to $(X_0, b, \sigma)$ satisfying \cref{main-ass} and let $Y$ be a solution to \eqref{eq:perturbed-sde} satisfying \cref{ass:perturbation}. Then for any $m\in [1,\infty)$, any $\gamma>1$ and any $T>0$ there exists a constant $C$ (depending on $m,\,\gamma,\,T$, $d$, $(q_i,p_i)$, $K$, $h$, $\| b\|_{L^{q_1}_t L^{p_1}_x}$, $\| \nabla \sigma\|_{L^{q_2}_t L^{p_2}_x}$), such that
\begin{equation}\label{eq:stab-general}
\Big\| \sup_{t\in [0,T]} |X_t-Y_t| \Big\|_{L_\omega^m} \leq C \Big(\| X_0-Y_0\|_{L_\omega^m} + \Big\| \sup_{t\in [0,T]} | V_t| \Big\|_{L_\omega^{m\gamma}} \Big) 
\end{equation}
where the process $V$ is defined as in \eqref{eq:process-V}.
\end{proposition}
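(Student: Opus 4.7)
The plan is to combine the Zvonkin transform (to cancel the singular drift) with a refined stochastic Gr\"onwall argument that exploits the Markovian structure of the SDE in order to handle the martingale term without restricting the moment $m$.

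\emph{Zvonkin reduction.} First we choose $\lambda$ large enough, via \cref{lem:zvonkin}, that the associated solution $u$ has spatial Lipschitz constant at most $1/2$; then $\phi_t(x):=x+u_t(x)$ is a bi-Lipschitz homeomorphism of $\R^d$ (both $\phi_t$ and $\phi_t^{-1}$ are $2$-Lipschitz), so the error $E_t:=\phi_t(Y_t)-\phi_t(X_t)$ is pathwise equivalent to $X_t-Y_t$ up to a factor $2$. Applying the It\^o--Krylov formula to $\phi_t(X_t)$ and to $\phi_t(Y_t)$ (the latter legitimate thanks to \cref{ass:perturbation}(i)-(ii), which guarantee integrability of $\nabla^2 u$ and $\partial_t u$ along $Y$ and permit passage to the limit from smooth approximations of $u$) and using the Zvonkin PDE \eqref{eq:zvonkin-pde} to cancel the singular drift of both processes, subtracting yields
\begin{equation*}
E_t = E_0 + V_t + \lambda \int_0^t [u_s(Y_s)-u_s(X_s)]\,\dd s + M_t,
\end{equation*}
with $V$ as in \eqref{eq:process-V} and $M_t := \int_0^t \{[(I+\nabla u_s)\sigma_s](Y_s)-[(I+\nabla u_s)\sigma_s](X_s)\}\,\dd W_s$.

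\emph{Lipschitz bound via maximal function and Khasminskii.} Write $\nabla[(I+\nabla u)\sigma]=\psi^1+\psi^2$ with $\psi^1:=D^2u\cdot\sigma\in L^{q_1}_t L^{p_1}_x$ (using $\|\sigma\|_\infty<\infty$ and \eqref{eq:zvonkin-transfor-estim}) and $\psi^2:=\nabla\sigma+\nabla u\cdot\nabla\sigma\in L^{q_2}_t L^{p_2}_x$ (using $\|\nabla u\|_\infty\leq 1/2$); both exponents lie in $\mathcal{J}_1\subset(2,\infty)^2$ with $p_i>d$, so \cref{lem:maximal-one-sided} furnishes $g^i\in L^{q_i}_t L^{p_i}_x$ with
\begin{equation*}
|[(I+\nabla u_s)\sigma_s](Y_s)-[(I+\nabla u_s)\sigma_s](X_s)|\leq[g^1_s(X_s)+g^2_s(X_s)]\,|Y_s-X_s|.
\end{equation*}
Set $A_t:=\int_0^t [g^1_s+g^2_s]^2(X_s)\,\dd s$; since $(q_i/2,p_i/2)\in\mathcal{J}_0$, \cref{lem:khasminskii} gives $\mE[\exp(\mu A_T)]<\infty$ for every $\mu>0$, with quantitative bounds depending only on the parameters in the statement. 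Combined with $|u_s(Y_s)-u_s(X_s)|\leq|E_s|$ (from the $1/2$-Lipschitz bound on $u$) we obtain the pathwise system
\begin{equation*}
|E_t|\leq |E_0|+\sup_{s\leq t}|V_s|+\lambda\int_0^t|E_s|\,\dd s+|M_t|,\qquad \langle M\rangle_t\leq C\int_0^t|E_s|^2\,\dd A_s.
\end{equation*}

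\emph{Stochastic Gr\"onwall with recentering.} The main obstacle is to convert the display above into \eqref{eq:stab-general} without restricting $m$: a direct BDG bound on $M$ reintroduces $\|\sup|E|\|_{L^m_\omega}$ on both sides, and a Scheutzow-type stochastic Gr\"onwall \cite[Lemma 3.8]{le2021taming} or a concave-majorant approach \cite{geiss2022concave} imposes a constraint on $m$. The fix is to partition $[0,T]$ into a finite number of subintervals $[t_i,t_{i+1}]$ with $\lambda(t_{i+1}-t_i)\leq 1/4$; a deterministic Gr\"onwall on each piece yields
\begin{equation*}
\sup_{s\in[t_i,t_{i+1}]}|E_s|\leq 2|E_{t_i}|+2\sup_{s\in[t_i,t_{i+1}]}|V_s-V_{t_i}|+2\sup_{s\in[t_i,t_{i+1}]}|M_s-M_{t_i}|.
\end{equation*}
Conditioning on $\mathcal{F}_{t_i}$ freezes $E_{t_i}$ and $X_{t_i}$; thanks to \cref{ass:perturbation}(iii), the increment $W_{t_i+\cdot}-W_{t_i}$ remains independent of $\mathcal{F}_{t_i}$, so a conditional version of \cref{lem:khasminskii} applied to the $X$-process restarted at $X_{t_i}$ bounds $\mE[(A_{t_{i+1}}-A_{t_i})^r\mid\mathcal{F}_{t_i}]$ by the $L^{q_1/2}_t([t_i,t_{i+1}];L^{p_1/2}_x)$-type norm of $(g^1+g^2)^2$, which can be made arbitrarily small by shrinking the mesh $\delta:=t_{i+1}-t_i$. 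BDG followed by conditional H\"older $(\gamma,\gamma')$ then gives, for any fixed $\gamma>1$,
\begin{equation*}
\mE\Big[\sup_{s\in[t_i,t_{i+1}]}|M_s-M_{t_i}|^m\,\Big|\,\mathcal{F}_{t_i}\Big]\lesssim \varepsilon(\delta)^{m/2}\,\mE\Big[\sup_{s\in[t_i,t_{i+1}]}|E_s|^{m\gamma}\,\Big|\,\mathcal{F}_{t_i}\Big]^{1/\gamma},
\end{equation*}
with $\varepsilon(\delta)\to 0$ as $\delta\to 0$. Choosing the mesh fine enough absorbs the sup term at the cost of a single H\"older $\gamma$-loss; iterating over the finitely many intervals and telescoping propagates $|E_{t_i}|^m$ correctly from one interval to the next and produces \eqref{eq:stab-general}. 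The $L^{m\gamma}_\omega$ norm of $V^*$ on the right-hand side of \eqref{eq:stab-general} is precisely the price of this one H\"older step.
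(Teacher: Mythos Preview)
Your Zvonkin reduction and the maximal-function/Khasminskii setup are correct and match the paper exactly. The gap is in the ``stochastic Gr\"onwall with recentering'' paragraph: the absorption you claim does not close. After BDG and conditional H\"older you obtain
\[
\mE\big[\sup_{[t_i,t_{i+1}]}|E_s|^m\,\big|\,\mathcal{F}_{t_i}\big]
\lesssim |E_{t_i}|^m + \mE\big[\sup|V|^m\,\big|\,\mathcal{F}_{t_i}\big]
+ \varepsilon(\delta)^{m/2}\,\mE\big[\sup_{[t_i,t_{i+1}]}|E_s|^{m\gamma}\,\big|\,\mathcal{F}_{t_i}\big]^{1/\gamma}.
\]
The last term is an $L^{m\gamma}_\omega$ quantity, not an $L^m_\omega$ one; making $\varepsilon(\delta)$ small does not let you absorb it into the left-hand side. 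You would need an a priori bound on $\|\sup|E|\|_{L^{m\gamma}_\omega}$ that is already of the form ``initial data plus $V$'', which is exactly what you are trying to prove (at an even higher moment). Iterating over the finitely many deterministic intervals does not fix this: the moment mismatch persists at every step.

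The paper avoids this by making the absorption \emph{pathwise}. It defines random stopping times $\tau^n$ by the condition $A_{\tau^{n+1}}-A_{\tau^n}=(4\tilde K)^{-2/m}$, so that on each random interval the increment of $A$ is bounded almost surely, not merely in conditional expectation. Combined with the pathwise BDG inequality of Siorpaes, this gives
\[
\sup_{[\tau^n,\tau^{n+1}]}|Z|^m \leq C\big[|Z_{\tau^n}|^m+\tilde M^n+|\bar V|^m\big] + 2C\,(A_{\tau^{n+1}}-A_{\tau^n})^{m/2}\sup_{[\tau^n,\tau^{n+1}]}|Z|^m,
\]
and the last factor is $\leq 1/2$ \emph{pointwise}, so the sup can be moved to the left with no H\"older and no moment loss. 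After taking expectation (killing the martingale $\tilde M^n$) and iterating, one controls $\mE[e^{-\kappa n}\sup_{[\tau^{n-1},\tau^n]}|Z|^m]$ uniformly, and then a single H\"older against the exponential weight $e^{-CA_T}$ (which has all moments by \cref{lem:khasminskii}) produces the $\gamma$-loss exactly once, on $\bar V$, yielding \eqref{eq:stab-general}. In short: the $\gamma$ in the statement comes from removing an exponential weight at the very end, not from a H\"older splitting of the quadratic variation; your deterministic partition cannot reproduce this because $A_{t_{i+1}}-A_{t_i}$ is not pathwise small.
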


\begin{proof}
It suffices to consider the case where the initial data $X_0$ and $Y_0$ are deterministic; indeed in the general case, as they are random variables independent of the driving $(W,\tilde W)$, we can first condition on the knowledge of $(X_0,Y_0)$, obtain the estimate, and then take expectation  again to conclude. From now on we will fix $m\in [1,\infty)$. 

We start by applying Zvonkin transform to both $X$ and $Y$; by classical computations, setting $\tilde{\sigma}:= \sigma(I+\nabla u)$ it holds
\[
\dd \big( \phi_t(X_t)\big) = \lambda u_t(X_t) \dd t + \tilde{\sigma}_t(X_t) \dd W_t;
\]
the computation for $Y$ is slightly more involved, but it is not difficult to check that, thanks to our definition of $V$, in the end one arrives at
\begin{align*}
\dd \big( \phi_t(Y_t)\big) = \lambda u_t(Y_t) \dd t + \tilde{\sigma}_t(Y_t) \dd W_t + \dd V_t.
\end{align*}
Indeed, the computation is  rigorously justified thanks to \cref{ass:perturbation}, as we can first apply It\^o formula to $\phi^\eps$, associated to some smooth approximation $b^\eps$ of $b$, and then pass to the limit as $\eps\to 0$.
As a consequence, if we integrate both equations over any interval $[s,t]$ and take the modulus, we find the estimate
\begin{align*}
|\phi_t(X_t)-\phi_t(Y_t)|\leq &
|\phi_s(X_s)-\phi_s(Y_s)| 
+ \lambda \int_s^t |u_r(X_r)-u_r(Y_r)| \dd r \\&
+ \bigg|\int_s^t [\tilde{\sigma}^1_r(X_r)-\tilde{\sigma}^1_r(Y_r)] \dd W_r\bigg| + \bar{V}
\end{align*}
where we set $\bar{V}:=2 \sup_{t\in [0,T]} V_t$; using the properties of $\phi,\,u$, we arrive at
\begin{equation}\label{eq:estim-1}
\sup_{t\in [s,s+h]} |X_t-Y_t| \lesssim |X_s-Y_s| + \lambda\, h \sup_{t\in [s,s+h]} |X_t-Y_t| + \sup_{t\in [s,s+h]} \bigg|\int_s^t [\tilde{\sigma}^1_r(X_r)-\tilde{\sigma}^1_r(Y_r)] \dd W_r\bigg| + \bar{V}.
\end{equation}
Observe that this is a pathwise inequality that holds for any choice of $s,h$, which we may and will take to be random as well.

Moreover, since $\nabla\sigma \in L^{q_2}_t L^{p_2}_x$ and $u\in L^\infty_t C^1_x \cap L^{q_1}_t W^{2,{p_1}}_x$, the new diffusion term satisfies $\nabla\tilde{\sigma}\in L^{q_1}_t L^{p_1}_x + L^{q_2}_t L^{p_2}_x $ as well; therefore we can apply \cref{lem:maximal-one-sided} to find functions $g^i\in L^{q_i}_t L^{p_i}_x$, $i=1,2$, for which \eqref{eq:maximal-one-sided} holds for $f=\tilde{\sigma}$.
Let us define the process
\begin{equation*}
A_t:= \int_0^t [\lambda^2 T + |g^1_r|^2(X_r) + |g^2_r|^2(X_r)] \dd r;
\end{equation*}
for a constant $\tilde K>0$ to be chosen later; we also set an increasing sequence of stopping times by $\tau^0=0$ and
\begin{equation*}
\tau^{n+1}=\inf\big\{t\geq \tau^n: t\leq T,\ A_t-A_{\tau^n} \geq  (4\tilde K)^{-\frac{2}{m}}\big\},
\end{equation*}
%
with the convention that $\tau_{n+1}=T$ if there is no $t\in [\tau_n,T]$ such that $A_t-A_{\tau^n}=(4\tilde K)^{-2/m}$.
Now let us take $s=\tau^n$ in \eqref{eq:estim-1} and elevate both sides to the $m$-th power, so to obtain
\begin{align*}
\sup_{t\in [\tau^n,\tau^n+h]} |Z_t|^m \lesssim_m \big[ |Z_{\tau^n}|^m + \lambda^m \, h^m \sup_{t\in [\tau^n,\tau^n+h]} |Z_t|^m + \sup_{t\in [0,h]} |M^n_t|^m + \bar{V}^m\big]
\end{align*}
where we set $Z:=X-Y$ and
\begin{equation*}
M^n_t:=\int_{\tau^n}^{\tau^n+t} [\tilde{\sigma}_r(X_r)-\tilde{\sigma}_r(Y_r)] \dd W_r.
\end{equation*}
By Doob's optimal stopping theorem $M^n_t$ is a continuous martingale w.r.t. to the filtration $\mathcal{G}_t=\mathcal{F}_{\tau^n+t}$; by the pathwise BDG inequality from \cite[Theorem 3]{siorpaes2018}, there exists another continuous $\mathcal{G}_t$-martingale $\tilde{M}^n$ such that
\[ \sup_{t\in [0,h]} |M^n_t|^m\leq [M^n]_h^{m/2} + \tilde{M}^n_h \]
%
where $[M^n]$ denotes the quadratic variation of $M^n$; by \cref{lem:maximal-one-sided} it holds
\[
[M^n]_t = \int_{\tau^n}^{\tau^n+t} |\tilde{\sigma}_r(X_r)-\tilde{\sigma}_r(Y_r)|^2 \dd r
\leq \int_{\tau^n}^{\tau^n+t} \big[ |g^1_r|^2(X_r) + |g^2_r|^2(X_r)\big] |Z_r|^2 \dd r.
\]
Combining everything and taking $h=\tau_{n+1}-\tau_n$, we arrive at an inequality of the form
\begin{align*}
\sup_{t\in [\tau^n,\tau^{n+1}]} |Z_t|^m
& \leq C \Big[ |Z_{\tau^n}|^m + \tilde M^n_{\tau^{n+1}-\tau^n} + |\bar{V}|^m\Big]\\
&\quad + \sup_{t\in [\tau^n,\tau^{n+1}]} |Z_t|^m \Big[\lambda^m \, T^{m/2} (\tau^{n+1}-\tau^n)^{m/2}
+\Big( \int_{\tau^n}^{\tau^{n+1}} (|g_1|^2+|g_2|^2)(X_r) \dd r\Big)^{m/2} \Big]\\
& \leq C \Big[ |Z_{\tau^n}|^m + \tilde M^n_{\tau^{n+1}-\tau^n} + |\bar{V}|^m\Big] + 2 C\, (A_{\tau^{n+1}}-A_{\tau^n})^{m/2} \sup_{t\in [\tau^n,\tau^{n+1}]} |Z_t|^m
\end{align*}
%
Taking $C=\tilde K$ as the desired constant in the definition of $\tau^n$, we find
\begin{equation*}
\sup_{t\in [\tau^n,\tau^{n+1}]} |Z_t|^m \leq 2\tilde K \Big[ |Z_{\tau^n}|^m + \tilde M^n_{\tau^{n+1}-\tau^n} + |\bar{V}|^m\Big];
\end{equation*}
using the fact that $\tilde{M}^n$ is a $\mathcal{G}_t$-martingale, taking expectation we are left with
\begin{equation*}
\mE\Big[\sup_{t\in [\tau^n,\tau^{n+1}]} |Z_t|^m \bigg ]
\leq 2\tilde K\, \mE\Big[ \sup_{t\in [\tau^{n-1},\tau^{n}]} |Z_t|^m \Big] + 2 \tilde K\, \mE[|\bar{V}|^m],
\end{equation*}
which iteratively yields (for an appropriately chosen constant $\kappa>0$ in function of $\tilde K$)
\begin{equation}\label{eq:estim-2}
\mE\bigg[ e^{-\kappa n} \sup_{t\in [\tau^{n-1},\tau^n]} |Z_t|^m \bigg]\leq |x_0-y_0|^m+\mE[|\bar{V}|^m].
\end{equation}
Now set $\Delta:= (4 \tilde K)^{-2/m}$, $C:= 2\kappa/ \Delta$, then it holds
\begin{align*}
\mE\Big[ e^{-C A_T} \sup_{t\in [0,T]} |Z_t|^m \Big]
& = \sum_{n\in \mathbb{N}} \mE\Big[ e^{-C A_T} \sup_{t\in [0,T]} |Z_t|^m \mathbbm{1}_{A_T\in [\Delta n, \Delta (n+1)]} \Big]\\
& \leq \sum_{n\in \mathbb{N}} \mE \Big[ e^{-2\kappa n} \sup_{j=1,\ldots,n+1} \sup_{t\in [\tau_{j-1},\tau_j]} |Z_t|^m \mathbbm{1}_{A_T\in [\Delta n, \Delta (n+1)]}\Big]\\
& \leq \sum_{n\in\mathbb{N}} e^{-\kappa n} \sum_{j=1}^{n+1} \mE\Big[ e^{-\kappa n} \sup_{t\in [\tau_{j-1},\tau_j]} |Z_t|^m\Big]\\
& \lesssim  |x_0-y_0|^m+\mE[|\bar{V}|^m]
\end{align*}
where in the last passage we applied \eqref{eq:estim-2}. Observe that since $g^i\in L^{q_i}_t L^{p_i}_x$, $|g^i|^2\in L^{{q_i}/2}_t L^{{p_i}/2}_x$ with $({q_i}/2,{p_i}/2)\in \mathcal{J}_0$, thus by \cref{lem:khasminskii} the process $A$ satisfies $\mE[\exp(\lambda A)]<\infty$ for all $\lambda\in \R$; therefore for any $\delta\in (0,1)$ it holds
\begin{align*}
\mE\Big[\sup_{t\in [0,T]} |Z_t|^{\delta m} \Big]
& \leq \mE\Big[e^{-C A_T} \sup_{t\in [0,T]} |Z_t|^{m} \Big]^{\delta}\, \mE\Big[e^{C \frac{\delta}{1-\delta} A_T}\Big]^{1-\delta}\\
& \lesssim_\delta |x_0-y_0|^{\delta m} +\mE[|\bar{V}|^m]^\delta.
\end{align*}
Taking the $(\delta m)^{-1}$-power on both sides and relabelling $\tilde{m}=m\delta$, $\gamma \tilde{m} =m$, overall yields the desired result.
\end{proof}

\subsection{Comparison of SDEs with different coefficients}\label{subsec:main-proof}
With \cref{stab-general} at hand, we are now ready to give the 

\begin{proof}[Proof of \cref{thm:main-theorem}]
Let us set $X^1=X$, $X^2=Y$, $b^1=b$, $\sigma^1=\sigma$, $R^1_t=(b^2-b^1)_t(X^2_t)$ and $R^2_t=(\sigma^2-\sigma^1)_t(X^2_t)$ in the setting of \cref{subsec:abstract}; it is then clear by \cref{subsec:preliminaries} that \cref{ass:perturbation} holds and so by \cref{stab-general} our task is reduced to provide estimates for $\| \bar{V}\|_{L^{m\gamma}}$;
The process $V$, as defined in \eqref{eq:process-V}, in this case can be decomposed as $V=I^1+I^2+I^3$ for
\begin{align*}
    &I^1_t:= \frac{1}{2} \int_0^t [(a^2-a^1):D^2 u ]_r(X^2_r) \dd r, \quad
    I^2_t:= \int_0^t [(\sigma^2-\sigma^1)(I+\nabla u)]_r(X^2_r)\dd W_r,\\
    & I^3_t:=\int_0^t[(b^2-b^1)\cdot(I+\nabla u)]_r(X^2_r) \dd r
\end{align*}
where $a^i=\sigma^i (\sigma^i)^\ast$ for $i=1,2$; we treat each term separately. From now on for simplicity we will write $m$ in place of $m\gamma$, as it doesn't affect the computation.

Recall that, under \cref{main-ass}, by Lemma \cref{lem:zvonkin} it holds $u\in L^{q_1}_t W^{2,{p_1}}_x \cap L^\infty_t C^1_x$; moreover $(p_1,q_1),\, (\tilde p_2,\tilde q_2)\in \mathcal{J}_1$, thus if we define $1/p'=1/p_1+1/\tilde p_2$ and $1/q'=1/q_1+1/\tilde q_2$, it holds $(p',q')\in \mathcal{J}_0$. By H\"older's inequality and estimate \eqref{est:khasminskii}, we can then control $I^1$ by
\begin{equation}\label{est-I1}\begin{split}
    \Big\| \sup_{t\in [0,T]} | I^1_t | \Big\|_{L_\omega^{m}}
    &\lesssim \|  (a^2-a^1):D^2 u \Vert_{L^{ q'}_t L^{ p'}_x}\\
    & \lesssim \|  a^2-a^1\Vert_{L^{ \tilde q_2}_t L^{\tilde p_2}_x} \Vert D^2 u \Vert_{L^{q_1}_t L^{p_1}_x}
    \lesssim \|  \sigma^2-\sigma^1\Vert_{L^{ \tilde q_2}_t L^{\tilde p_2}_x}
\end{split}\end{equation}
where in the final passage we also exploited the bound $\|\sigma^i\|_{L^\infty_{t,x}} \leq K$.
A similar idea applies to $I^2$, combined with BDG's inequality:
\begin{equation}\label{est-I2}\begin{split}
    \Big\| \sup_{t\in [0,T]} | I^2_t | \Big\|_{L_\omega^{m}}
    & \lesssim \Big\Vert  \int_0^T |(\sigma^2_r-\sigma^1_r)(I+\nabla u_r)|^2(X^2_r)\dd r\Big\Vert_{L_\omega^{m/2}}^{1/2}\\
    &\lesssim \big\Vert |\sigma^2-\sigma^1|^2 \big\Vert_{L^{ \tilde q_2/2}_t L^{\tilde p_2/2}_x}^{1/2}\Vert I+\nabla u\Vert_{L^{\infty}_t L^{\infty}_x}
    \lesssim \Vert \sigma^1-\sigma^2\Vert_{L^{ \tilde q_2}_t L^{\tilde p_2}_x}.
\end{split}\end{equation}
where this time we applied \eqref{est:khasminskii} for $(\tilde p_2/2, \tilde q_2/2)\in\mathcal{J}_0$.
Similarly, since $(\tilde{p}_1,\tilde{q}_1)\in\mathcal{J}_0$, it holds
\begin{equation}\label{est-I3-1}\begin{split}
     \Big\| \sup_{t\in [0,T]} | I^3_t | \Big\|_{L_\omega^{m}}
     &\lesssim \|  (b^2-b^1)\cdot(I+\nabla u) \Vert_{L^{ \tilde p_1}_t L^{ \tilde q_1}_x}\\
     & \lesssim \|  b^2-b^1 \Vert_{L^{ \tilde p_1}_t L^{ \tilde q_1}_x} \Vert (I+\nabla u) \Vert_{L^{ \infty}_t L^{ \infty}_x}
    \lesssim \Vert  b^2-b^1\Vert_{L^{\tilde p_1}_t L^{ \tilde q_1}_x}.
\end{split}\end{equation}
In the case $q_2=\infty$ and $4/q_1+d/p_1<1$, we can give an alternative estimate for $I^3$; to this end, let us recall that by the product rules for distributions (see e.g. \cite[Lemma A.2 (ii)]{le2021taming}), for $p\geq d \vee 2 $ it holds
\begin{equation}\label{eq:product-distributions}
\| f\, g\|_{W^{-1,p}_x} \lesssim \| f\|_{W^{-1,p}_x} \| g\|_{W^{1,p}_x}\quad \forall\, f\in W^{-1,p}_x,\, g\in W^{1,p}_x.
\end{equation}
By H\"older's inequality and the second estimate in \eqref{est:khasminskii} (for $\tilde{q}=q_1/2,\, \tilde{p}=p_1$, which under our assumptions satisfy $(\tilde q,\tilde p)\in\mathcal{J}_1$), we then find
\begin{equation}\label{est-I3-2}
\Big\| \sup_{t\in [0,T]} | I^3_t | \Big\|_{L_\omega^{m}}
\lesssim \|  (b^2-b^1)\cdot(I+\nabla u) \Vert_{L^{ q_1/2}_t W^{-1,p_1}_x}
\lesssim \|  b^2-b^1\Vert_{L^{q_1}_t W^{-1,p_1}_x} \Vert u \Vert_{L^{q_1}_t W^{2,p_1}_x}.
\end{equation}
Overall, estimates \eqref{est-I1}, \eqref{est-I2}, combined with \eqref{est-I3-1} (respectively \eqref{est-I3-2}) and \cref{stab-general} yield \eqref{eq:main-stab-1} (respectively \eqref{eq:main-stab-2}).
\end{proof}

We can also obtain stability estimates for Stratonovich SDEs.

\begin{corollary}\label{cor:stratonovich}
Let \cref{main-ass} hold and consider $X^i$ solutions to the Stratonovich SDEs
\[
\dd X^i_t = b^i_t(X^i_t)\dd t + \sigma^i_t(X^i_t)\circ\! \dd  W_t, \quad X^i\big\vert_{t=0}=X^i_0.
\]
 Then for any $m\in [1,\infty)$ and any $(\tilde p_1,\tilde q_1), (\tilde p_2,\tilde q_2)\in \mathcal{J}_0$ and $(\tilde p_3,\tilde q_3)\in \mathcal{J}_{1}$, it holds
\begin{equation}\label{eq:stratonovich-stab-1}\begin{split}
    \bigg\| \sup_{t\in[0,T]}|X_t^1-X_t^2|\bigg\|_{L^m_\omega}
    \lesssim &\| X^1_0-X^2_0\|_{L^m_\omega} + \| b^1-b^2\|_{L^{\tilde q_1}_t L^{{\tilde p_1}}_x}\\
    & + \| \nabla(\sigma^1-\sigma^2)\|_{L^{\tilde q_2}_t L^{\tilde p_2}_x}
    + \| \sigma^1-\sigma^2\Vert_{L^{\tilde q_3}_t L^{\tilde p_3}_x}.
\end{split}\end{equation}
If additionally $q_2=\infty$ and $4/q_1+d/p_1<1$, we have
\begin{align}\label{eq:stratonovich-stab-2}
    \bigg\| \sup_{t\in[0,T]}|X_t^1-X_t^2|\bigg\|_{L^m_\omega}
    \lesssim \| X^1_0-X^2_0\|_{L^m_\omega} + \| b^1-b^2\|_{L^{q_1}_t W^{-1,p_1}_x}
    + \| \sigma^1-\sigma^2\Vert_{L^{\tilde q_3}_t L^{\tilde p_3}_x}.
\end{align}
\end{corollary}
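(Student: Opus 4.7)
The plan is to reduce the Stratonovich case to \cref{thm:main-theorem} via the standard Stratonovich--It\^o conversion. Each SDE is rewritten as
\[
dX^i_t = \tilde b^i_t(X^i_t)\,dt + \sigma^i_t(X^i_t)\,dW_t, \qquad \tilde b^i = b^i + \tfrac12 c^i,
\]
with $c^i$ the usual correction, schematically $c^i = (\sigma^i\cdot\nabla)\sigma^i$. Boundedness of $\sigma^i$ and $\nabla\sigma^i\in L^{q_2}_tL^{p_2}_x$ give $c^i\in L^{q_2}_tL^{p_2}_x$, so $\tilde b^i\in L^{q_1}_tL^{p_1}_x+L^{q_2}_tL^{p_2}_x$, still within the scope of the machinery of \cref{sec:stability} (cf.\ Remark~(b) after \cref{thm:main-theorem}). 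With the schematic identity
\[
c^1-c^2 = (\sigma^1-\sigma^2)\nabla\sigma^1 + \sigma^2\,\nabla(\sigma^1-\sigma^2),
\]
the task reduces to reapplying (a variant of) \cref{thm:main-theorem} to the It\^o-converted pair, controlling the drift difference $\tilde b^1-\tilde b^2$ through these two extra products.

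For \eqref{eq:stratonovich-stab-1}, I would revisit the proof of \cref{thm:main-theorem} and split the $I^3$-type term into three contributions. The first, $b^1-b^2$, yields $\|b^1-b^2\|_{L^{\tilde q_1}_t L^{\tilde p_1}_x}$ exactly as before. The second, $\sigma^2\nabla(\sigma^1-\sigma^2)$, produces $\|\nabla(\sigma^1-\sigma^2)\|_{L^{\tilde q_2}_t L^{\tilde p_2}_x}$ by boundedness of $\sigma^2$. The third, $(\sigma^1-\sigma^2)\nabla\sigma^1$, is handled by H\"older in space-time against the fixed $\|\nabla\sigma^1\|_{L^{q_2}_t L^{p_2}_x}$, giving $\|\sigma^1-\sigma^2\|_{L^{\tilde q_3}_t L^{\tilde p_3}_x}$; since $(p_2,q_2), (\tilde p_3,\tilde q_3)\in\mathcal J_1$, the resulting pair lies in $\mathcal J_0$ and \cref{lem:khasminskii} applies. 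The $I^1, I^2$ contributions from the diffusion difference are unchanged from the proof of \cref{thm:main-theorem} and add at worst another $\|\sigma^1-\sigma^2\|_{L^{\tilde q_3}_tL^{\tilde p_3}_x}$ term.

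For \eqref{eq:stratonovich-stab-2}, the extra hypotheses $q_2=\infty$ and $4/q_1+d/p_1<1$ allow invoking \eqref{eq:main-stab-2}. The algebraic trick is to place $\nabla(\sigma^1-\sigma^2)$ inside a total spatial divergence by rewriting (with indices contracted appropriately)
\[
c^1-c^2 = \nabla\cdot\bigl[\sigma^1(\sigma^1-\sigma^2)\bigr] - (\nabla\sigma^1)(\sigma^1-\sigma^2) + (\sigma^1-\sigma^2)\nabla\sigma^2.
\]
The divergence term has $W^{-1,p_1}_x$-norm bounded by $\|\sigma^1(\sigma^1-\sigma^2)\|_{L^{p_1}_x}\lesssim K\|\sigma^1-\sigma^2\|_{L^{p_1}_x}$ via \eqref{eq:product-distributions}; the remaining pointwise products are estimated by H\"older using $\|\nabla\sigma^i\|_{L^\infty_t L^{p_2}_x}$ (from $q_2=\infty$) against $\sigma^1-\sigma^2$ in some $L^{\tilde q_3}_tL^{\tilde p_3}_x$ with $(\tilde q_3,\tilde p_3)\in\mathcal J_1$, the necessary room being granted by $4/q_1+d/p_1<1$. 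The main obstacle I expect is precisely this bookkeeping of exponents: after each H\"older split one must verify that the resulting pair lands in the correct $\mathcal J_\beta$ class, so that either \cref{lem:khasminskii} or \cref{lemm-no-drift} applies, and in particular that the stronger hypothesis of the second estimate yields exactly the margin needed to absorb the Stratonovich correction into the negative-Sobolev framework.
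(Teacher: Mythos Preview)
Your overall strategy---convert Stratonovich to It\^o, then feed the modified drifts into \cref{thm:main-theorem}---is exactly what the paper does, and your treatment of \eqref{eq:stratonovich-stab-1} matches the paper's almost verbatim (with the immaterial interchange of $\sigma^1,\sigma^2$ in the bilinear decomposition of $c^1-c^2$).

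For \eqref{eq:stratonovich-stab-2} the paper takes a slightly different route on the term $\sigma^1\cdot\nabla(\sigma^1-\sigma^2)$: rather than your explicit divergence-form rewriting, it invokes directly the product estimate \eqref{eq:product-distributions-2} (from \cite[Lemma A.2(iii)]{le2021taming}), namely $\|fg\|_{W^{-1,\bar p}_x}\lesssim(\|g\|_{L^\infty_x}+\|\nabla g\|_{L^p_x})\|f\|_{W^{-1,\bar p}_x}$, applied with $f=\nabla(\sigma^1-\sigma^2)$ and $g=\sigma^1$. Since $\|\nabla(\sigma^1-\sigma^2)\|_{W^{-1,\tilde p_3}_x}\lesssim\|\sigma^1-\sigma^2\|_{L^{\tilde p_3}_x}$, this gives the desired bound in one stroke and sidesteps the exponent bookkeeping you flag as the main obstacle. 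Your divergence trick is morally the same manipulation unwound by hand (indeed, \eqref{eq:product-distributions-2} is itself proved via such a Leibniz argument), so both approaches are valid; the paper's is just more economical. One small correction: the bound $\|\nabla\cdot F\|_{W^{-1,p}_x}\lesssim\|F\|_{L^p_x}$ you need for the divergence term is not \eqref{eq:product-distributions} but simply the definition of $W^{-1,p}_x$.
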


\begin{proof}
The proof is very similar to that of \cref{thm:main-theorem}, so we mostly sketch it. We can rewrite the Stratonovich SDEs into the corresponding It\^o ones, with corresponding new drift terms $\tilde{b}^i$ in place of $b^i$ given by
\[
\tilde{b}^i_t:= b^i_t + \frac{1}{2} (\sigma^i\cdot\nabla)\sigma^i, \quad
\text{where }\ [(\sigma\cdot\nabla)\sigma]_j = \sum_{k,l} \partial_l \sigma_{jk} \sigma_{lk}.
\]
Estimates \eqref{eq:stratonovich-stab-1}-\eqref{eq:stratonovich-stab-2} then follow respectively from \eqref{eq:main-stab-1}-\eqref{eq:main-stab-2}, once we estimate the difference $\tilde{b}^1-\tilde{b}^2$ in appropriate norms.
In order to obtain \eqref{eq:stratonovich-stab-1}, we can use
\[
(\sigma^1\cdot \nabla) \sigma^1-(\sigma^2\cdot \nabla) \sigma^2 = (\sigma^1\cdot \nabla)(\sigma^1-\sigma^2) + [(\sigma^1-\sigma^2)\cdot\nabla] \sigma^2
\]
and then
\begin{align*}
& \| (\sigma^1\cdot \nabla)(\sigma^1-\sigma^2)\|_{L^{\tilde q_2}_t L^{\tilde p_2}_x}
\leq \| \sigma^1\|_{L^\infty_{t,x}} \| \nabla \sigma^1-\nabla\sigma^2\|_{L^{\tilde q_2}_t L^{\tilde p_2}_x}\\
&\| [(\sigma^1-\sigma^2)\cdot\nabla] \sigma^2\|_{L^{q'}_t L^{p'}_t}
\leq \|\sigma^1-\sigma^2\|_{L^{\tilde q_3}_t L^{\tilde p_3}_x} \, \| \nabla\sigma^2\|_{L^{q_2}_t L^{p_2}_x}
\end{align*}
where we set $1/p'=1/\tilde{p}_3+1/p_2$, $1/q'=1/\tilde{q}_3+1/q_2$ so that $(p',q')\in\mathcal{J}_0$.
Instead for obtaining \eqref{eq:stratonovich-stab-2}, in the case $q_2=\infty$ we can estimate the first term differently. To this end, let us recall that by \cite[Lemma A.2 (iii)]{le2021taming}), whenever $p>d$ and $1/\bar{p}+1/p<1$, it holds
\begin{equation}\label{eq:product-distributions-2}
\| f\, g\|_{W^{-1,\bar{p}}_x} \lesssim (\| g\|_{L^\infty_x}+ \| \nabla g\|_{L^p_x}) \| f\|_{W^{-1,\bar{p}}_x}\quad \forall\, f\in W^{-1,p}_x,\, g\in W^{1,p}_x.
\end{equation}
Applying estimate \eqref{eq:product-distributions-2} for the choice $p=p_2$ and $\bar{p}=\tilde{p}_3$, we obtain
\begin{align*}
\| \sigma^1\cdot\nabla (\sigma^1-\sigma^2)\|_{L^{\tilde q_3}_t W^{-1,\tilde p_3}_x}
& \lesssim \big[ \| \sigma^1\|_{L^\infty_{t,x}} + \| \nabla \sigma^1\|_{L^\infty_t L^p_x}\big] \| \nabla (\sigma^1-\sigma^2)\|_{L^{\tilde q_3}_t W^{-1,\tilde p_3}_x}
\\&\lesssim \| \sigma^1-\sigma^2\|_{L^{\tilde q_3}_t L^{\tilde p_3}_x}.
\end{align*}
Combined with \cref{stab-general} and estimates \eqref{est-I1}, \eqref{est-I2}, \eqref{est-I3-1} and \eqref{est-I3-2}, this yields the conclusion.
\end{proof}

\begin{remark}\label{rem:interpolation-stability}
\cref{thm:main-theorem} and \cref{cor:stratonovich} admit several other variants, in the sense of employing other quantities of the form $\| b^1-b^2\|_{L^{\tilde q_1}_t W^{-\beta,\tilde p_1}_x}$ in  the r.h.s. of \eqref{eq:main-stab-2} and \eqref{eq:stratonovich-stab-2}. Indeed going through the proof, the only relevant difference comes from estimating the term $I^3$ related to $\int_0^\cdot [(b^2-b^2)\cdot (I+\nabla u)]_r(X^2_r) \dd r$, which can be accomplished as in \cref{rem:interpolation-khasminskii} under the assumption $q_2=\infty$. There are two different regimes, related to the parameters $(\beta,q_1,p_1)$:
\begin{enumerate}
\item $\beta < 1-2/q_1$. In this case, since $(p_1,q_1)\in\mathcal{J}_1$, we can assume $\beta$ to satisfy $\beta>d/p_1$ as well (this proves a stronger estimate, as for $\tilde \beta\leq \beta$ one can use $W^{-\tilde\beta,p}_x\hookrightarrow W^{-\beta,p}_x$). By \cref{lem:zvonkin} and standard interpolation techniques, it is easy to show that $\nabla u\in L^\infty_t W^{\beta,p_1}_x$ and one can then apply \cite[Lemma A.2 (ii)]{le2021taming} to obtain
\begin{align*}
\| (b^2-b^1)\cdot(I+\nabla u)\|_{L^{\tilde q_1}_t W^{-\beta,\tilde p_1}_x}
\lesssim \| b^1-b^2\|_{L^{\tilde q_1}_t W^{-\beta,\tilde p_1}_x} (1+ \| \nabla u\|_{L^\infty_t W^{\beta,p_1}_x})
\end{align*}
eventually leading to
\begin{equation}\label{eq:stability-variant}
\bigg\| \sup_{t\in[0,T]}|X_t^1-X_t^2|\bigg\|_{L^m_\omega}
    \lesssim \| X^1_0-X^2_0\|_{L^m_\omega} + \| b^1-b^2\|_{L^{\tilde q_1}_t W^{-\beta, \tilde p_1}_x}
    + \| \sigma^1-\sigma^2\Vert_{L^{\tilde q_2}_t L^{\tilde p_2}_x}
\end{equation}
for any choice of parameters $(\tilde p_1,\tilde q_1)\in\mathcal{J}_\beta$. This is in agreement with \eqref{eq:main-stab-1}, which is recovered for $\beta=0$.

\item $\beta> 1-2/q_1$. In this case we only know that $\nabla u\in L^{\bar q}_t W^{\beta, p_1}$ for $1/\bar{q}=1/q-(1-\beta)/2$; combined again with \cite[Lemma A.2 (ii)]{le2021taming}, this implies $(b^2-b^1)\cdot(I+\nabla u)\in L^{q'}_t W^{-\nu,\tilde p_1}_x$ for $1/q'=1/\bar{q}+1/\tilde q_1$ and the relevant condition becomes $(\tilde p_1,q')\in \mathcal{J}_\beta$. Overall we still obtain a stability estimate where $\| b^1-b^2\|_{L^{\tilde q_1}_t W^{-\beta,\tilde p_1}_x}$ appears, but now only under the more restrictive condition
\begin{equation}\label{eq:restrictive-parameter-stab}
\frac{2}{q_1}+\frac{2}{\tilde q_1}+\frac{d}{\tilde p_1}< 3-2\beta, \quad \frac{d}{p_1}<\beta.
\end{equation}
Condition \eqref{eq:restrictive-parameter-stab} agrees with $4/q_1+d/p_1<1$ needed to derive \eqref{eq:main-stab-2}, which corresponds to $\beta=1$, $\tilde p_1=p_1$ and $\tilde q_1= q_1$.
\end{enumerate}
\end{remark}

\section{Applications}\label{sec:applications}

\subsection{McKean--Vlasov equations}\label{subsec:MKV}

Throughout this section, for any $t\in [0,T]$ and any $X$, we will denote by $X^t_\cdot$ the stopped process $X^t_s:=X_{t\wedge s}$, which we ask the reader not to confuse with the pointwise evaluatation $X_t$.
We consider McKean--Vlasov (or distribution dependent) SDEs of the form
\begin{equation}\label{eq:MKV}
\dd X_t = B_t(X_t,\mu_t)\dd t + \Sigma_t(X_t,\mu_t)\dd W_t, \quad \mu_t=\cL(X^t_\cdot)
\end{equation}
for suitable coefficients $B,\,\Sigma$, see \cref{ass:MKV} below.

In the case of regular (e.g. Lipschitz) continuous functions $B,\,\Sigma$, equation \eqref{eq:MKV} can be solved classically; however in the presence of a singular drift $B$, several important results in the literature guaranteeing existence and uniqueness for \eqref{eq:MKV} require the diffusion term $\Sigma$  not depend on $\mu$, see e.g. \cite[Section 3.2]{mishura2020existence}, or anyway more restrictive assumption on $\Sigma$, cf. \cite[Section 4]{rockner2021well}; the reason lies in the use of transport-cost inequalities and Girsanov transform, see also \cite{lacker2018strong}. On the other hand there is another class of results, including \cite{huang2019distribution,huang2020mckean}, possibly involving the use of mixed Wasserstein-total variation distances, which naturally allows $\Sigma$ to depend on $\mu$; the shortcoming is a more restrictive assumption on $B$ and $\Sigma$, e.g. they must be Lipschitz in $\mu$ uniformly in $x$, which rules out basic cases of interest like $B(x,\mu)=b\ast \mu$ for unbounded $b$.
Let us stress that the literature on the topic is quite vast and there are many other works not properly fitting in this dichotomy, see for instance \cite{huang2021distribution,ren2022linearization, issoglio2021mckean};
finally, let us mention the remarkable work \cite{zhao2020distribution} for existence and uniqueness results, in the presence of $\mu$-dependent $\Sigma$, under yet another set of assumptions.

Here we do not follow any of the aforementioned approaches, instead we derive existence and uniqueness results for \eqref{eq:MKV} by readapting the strategy from \cite{galeati2022distribution}, based on the use of stability estimates in negative Sobolev spaces and Wasserstein distances.
Our assumptions cover the basic convolutional case $B(x,\mu)=b\ast \mu$ (cf. \cref{cor:MKV-convolutional}), but also allow for $\Sigma$ to depend on $\mu$, as well as a more general abstract class of coefficients $B$ and $\Sigma$ with nonlinear dependence on $\mu$ (cf. \cref{ass:MKV}).\\

In the following, given a separable Banach space $E$, we denote by $\cP(E)$ the set of probability measures on $E$ and by $\mathbb{W}_m$ the associated $m$-Wasserstein distance, namely
\[
\mathbb{W}_m(\mu,\nu)=\inf \bigg\{ \Big( \int \| x-y\|_E^m\, \pi({\rm d} x,{\rm d} y) \Big)^{1/m} : \pi\in \Pi(\mu,\nu)\bigg\}
\]
where $\Pi(\mu,\nu)\subset \cP(E\times E)$ denotes the set of couplings of $(\mu,\nu)$.
We refer to \cite{villani2009} for a complete survey on the properties of $\mathbb{W}_m$
; let us only recall that $\mathbb{W}_m(\mu,\nu)\leq \mathbb{W}_{\tilde m}(\mu,\nu)$ whenever $m\leq \tilde m$ and that for any $m\in [1,\infty)$, $\mu,\nu\in\cP(E)$ there exists an optimal coupling for $\mathbb{W}_m(\mu,\nu)$, which can be realised as a pair of $E$-valued random variables $(Y,Z)$ such that $\cL(Y)=\mu$, $\cL(Z)=\nu$ and $\mathbb{W}_m(\mu,\nu)=\mE[\| Y-Z\|_E^m]^{1/m}$.
We will mainly consider $E$ as the path space $C_T:=C([0,T];\R^d)$ or $E=\R^d$.

We are now ready to introduce our assumptions on the coefficients $B$, $\Sigma$.

\begin{assumption}\label{ass:MKV}
There exist $p_1,\,p_2 \in (d,\infty)$, $m\in [1,\infty)$ and $C>0$ such that the maps $B:[0,T]\times \R^d\times \cP(C_T)\to \R^d$, $\Sigma:[0,T]\times \R^d\times \cP(C_T)\to \R^{d\times d}$ satisfy the following:
\begin{itemize}
\item[i)] $\| B_t(\cdot,\mu)\|_{L^{p_1}_x} \leq C$, $\| B_t(\cdot,\mu)-B_t(\cdot,\nu)\|_{W^{-1,{p_1}}_x} \leq C\, \mathbb{W}_m(\mu,\nu)$ for all $t\in [0,T]$, $\mu,\nu\in \cP(C_T)$.
\item[ii)] $\| \nabla \Sigma_t(\cdot,\mu)\|_{L^{p_2}_x} \leq C$, $\| \Sigma_t(\cdot,\mu)-\Sigma_t(\cdot,\nu)\|_{L^{p_2}_x} \leq C\, \mathbb{W}_m(\mu,\nu)$ for all $t\in [0,T]$, $\mu,\nu\in \cP(C_T)$.
\item[iii)] $C^{-1} |\xi|^2 \leq | \Sigma(t,x,\mu)^\ast \xi|^2 \leq C |\xi|^2$ for all $\xi\in\R^d$, $t\in [0,T]$, $x\in \R^d$ and $\mu\in\cP(C_T)$.

\end{itemize}
\end{assumption}

Under \cref{ass:MKV}, one can give meaning to equation \eqref{eq:MKV} as follows: given a continuous adapted process $Y$, we can define $\mu_t=\cL(Y^t_\cdot)$ and then
\begin{align}
    \label{coe:b-sigma} b^Y_t(x):= B_t(x,\mu_t),\quad \sigma^Y_t(x):=\Sigma_t(x,\mu_t);
\end{align}
it can be readily checked that $(b^Y,\sigma^Y)$ satisfy \cref{main-ass}
\footnote{Since $\nabla \sigma^Y\in L^p_x$ for some $p>d$, Morrey's inequality implies that it is $\alpha$-H\"older in space for $\alpha=1-d/p$, uniformly in time; thus conditions concerning the modulus of continuity of $\sigma^X$ are automatically satisfied.}
, thus the associated SDE is wellposed.
We then define $X$ to be a solution to \eqref{eq:MKV} if and only if it is a solution to the SDE associated to $(b^X,\sigma^X)$; the concepts of strong existence, pathwise uniqueness and uniqueness in law for \eqref{eq:MKV} can then be readapted similarly.
Let us finally recall that, as usual for McKean--Vlasov SDEs, we will consider the initial datum $X_0$ to be random and independent of $W$; depending on how one wants to formulate it, the data of the problem can then be considered equivalently $(X_0,B,\Sigma)$ or $(\mu_0,B,\Sigma)$, where $\mu_0=\cL(X_0)$.

\begin{theorem}\label{thm:MKV}
Let $B$, $\Sigma$ satisfy \cref{ass:MKV} for some $m\in [1,\infty)$.
Then for any $\mu_0\in \cP_m(\R^d)$ strong existence, pathwise uniqueness and uniqueness in law holds for the SDE \eqref{eq:MKV}.
A similar statement holds for Stratonovich version of \eqref{eq:MKV}, i.e. with $\Sigma_t(X_t,\mu_t) \dd W_t$ replaced by $\Sigma_t(X_t,\mu_t) \circ \dd W_t$.
\end{theorem}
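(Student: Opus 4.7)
The plan is to mimic the fixed-point strategy of \cite{galeati2022distribution}, leveraging our stability estimates in negative Sobolev norms. Fix $X_0$ independent of $W$ with $\cL(X_0)=\mu_0$ and consider the space $\mathcal{S}_T:=\{Y\in L^m(\Omega;C_T): Y\text{ adapted},\, Y_0=X_0 \text{ a.s.}\}$. For $Y\in\mathcal{S}_T$, set $\mu^Y_t:=\cL(Y^t_\cdot)$ and define the frozen coefficients $b^Y_t(x):=B_t(x,\mu^Y_t)$, $\sigma^Y_t(x):=\Sigma_t(x,\mu^Y_t)$. By \cref{ass:MKV}, $(b^Y,\sigma^Y)$ satisfy \cref{main-ass} uniformly in $Y$: the relevant $L^{p_i}_x$-norms are controlled in $L^\infty_t$, and since $p_i>d$ we may take $q_1,q_2$ as large as needed. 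The classical SDE with coefficients $(b^Y,\sigma^Y)$ and initial datum $X_0$ thus admits a unique strong solution $X^Y$, whose $L^m_\omega C_T$-norm is bounded uniformly in $Y$. Setting $\Phi(Y):=X^Y$, fixed points of $\Phi$ are exactly the solutions of \eqref{eq:MKV}.

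The next step is to extract a contraction on a short interval $[0,T_0]$. Choosing $q_1$ large so that $4/q_1+d/p_1<1$, and $\tilde p_2=p_2$ with $\tilde q_2\in (2/(1-d/p_2),\infty)$ so that $(\tilde p_2,\tilde q_2)\in \mathcal{J}_1$, estimate \eqref{eq:main-stab-2} of \cref{thm:main-theorem} yields
\[
\| X^{Y^1} - X^{Y^2}\|_{L^m_\omega C_{T_0}} \lesssim \| b^{Y^1} - b^{Y^2}\|_{L^{q_1}(0,T_0; W^{-1,p_1}_x)} + \| \sigma^{Y^1} - \sigma^{Y^2}\|_{L^{\tilde q_2}(0,T_0; L^{p_2}_x)}.
\]
By \cref{ass:MKV}(i)--(ii), each integrand is pointwise bounded in $t$ by $C\,\mathbb{W}_m(\mu^{Y^1}_t,\mu^{Y^2}_t)$, and the trivial coupling of $(Y^1,Y^2)$ gives $\mathbb{W}_m(\mu^{Y^1}_t,\mu^{Y^2}_t)\leq \| Y^1-Y^2\|_{L^m_\omega C_t}$. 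Integrating in $t$ produces
\[
\| X^{Y^1} - X^{Y^2}\|_{L^m_\omega C_{T_0}} \leq C\bigl(T_0^{1/q_1} + T_0^{1/\tilde q_2}\bigr)\,\| Y^1 - Y^2\|_{L^m_\omega C_{T_0}}.
\]
For $T_0$ sufficiently small, $\Phi$ is a strict contraction on $\mathcal{S}_{T_0}$; Banach's theorem yields strong existence and pathwise uniqueness on $[0,T_0]$. Since $T_0$ depends only on the structural constants in \cref{ass:MKV}, a finite iteration extends the solution to $[0,T]$: at each step the coefficients carry an already-solved \emph{history} piece, but this preserves both the uniform bounds of \cref{main-ass} and the Wasserstein-Lipschitz dependence needed for the next contraction. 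Uniqueness in law then follows from pathwise uniqueness via Yamada--Watanabe.

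For the Stratonovich version, reduce to It\^o by absorbing the correction $\frac{1}{2}[(\Sigma\cdot\nabla)\Sigma](x,\mu)$ into an effective drift $\tilde B$; the product rule \eqref{eq:product-distributions-2} together with the bounds on $\Sigma,\nabla\Sigma$ from \cref{ass:MKV}(ii)--(iii) guarantees that $\tilde B$ still satisfies \cref{ass:MKV}(i), so the It\^o argument applies verbatim. The main analytical obstacle is the $\sigma^Y$-contribution to the contraction: since $\Sigma$ is only Lipschitz in $\mu$ at \emph{positive} regularity, no negative-Sobolev smoothing is available (in contrast to the drift), and the smallness in $T_0$ must be extracted entirely from the factor $T_0^{1/\tilde q_2}$ coming from time integrability. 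This forces us to insist on $\tilde q_2<\infty$ and to choose $(\tilde p_2,\tilde q_2)\in\mathcal{J}_1$ judiciously; no additional analytic input beyond \cref{thm:main-theorem} is required.
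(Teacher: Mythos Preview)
Your argument is correct and follows essentially the same fixed-point strategy as the paper: freeze the measure, solve the resulting classical SDE, and use the negative-Sobolev stability estimate \eqref{eq:main-stab-2} to obtain a contraction. The paper achieves contraction globally on $[0,T]$ via an exponentially weighted metric $d_E(Y,Z)=\sup_t e^{-\lambda t}\|Y^t-Z^t\|_{L^m_\omega C_T}$ (choosing $\lambda$ large), whereas you obtain it on a short interval $[0,T_0]$ and iterate; both are standard and equivalent here since, as you note, $T_0$ depends only on the structural constants of \cref{ass:MKV}.

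Two minor points. First, your Stratonovich reduction is morally right but not literally: the correction $\tfrac12(\Sigma\cdot\nabla)\Sigma$ lands in $L^{p_2}_x$, not $L^{p_1}_x$, and its $\mu$-Lipschitz control is in $W^{-1,p_2}_x+L^{p_2/2}_x$ rather than $W^{-1,p_1}_x$, so $\tilde B$ does not satisfy \cref{ass:MKV}(i) verbatim. The paper sidesteps this by invoking \cref{cor:stratonovich} directly, which already packages the needed product estimates; your route still works once you track the correct exponents. Second, ``Yamada--Watanabe'' for McKean--Vlasov SDEs is not the classical statement and requires some care (the coefficients depend on the law of the unknown); the paper instead cites the structural results \cite[Propositions~3.5--3.7]{galeati2022continuous} to pass from pathwise uniqueness to uniqueness in law.
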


\begin{proof}
We give the proof in the It\^o case, the Stratonovich one being identical (up to applying Corollary \ref{cor:stratonovich} in place of Theorem \ref{thm:main-theorem}).
Standard arguments involving the structure of eq. \eqref{eq:MKV} (see for instance \cite[Propositions 3.5-3.7]{galeati2022continuous}) allow to show that any weak solution is a strong one and that pathwise uniqueness implies uniqueness in law; thus our task is reduced to show existence and uniqueness among strong solutions.
Consider the filtration $\mathcal{F}_t$ generated by $(X_0,W)$ and let $E$ be the space of continuous, $\mathcal{F}_t$-adapted paths $Y\in L^m_\omega C_T$, endowed with the metric
\[
d_E(Y,Z):=
\sup_{t\in [0,T]} e^{-\lambda t}\, \mE\Big[\sup_{s\leq t} |Y_s - Z_s|^m\Big]^{1/m}
= \sup_{t\in [0,T]} e^{-\lambda t}\, \| Y^t_\cdot - Z^t_\cdot\|_{L^m_\omega C_T}
\]
for a suitable $\lambda>0$ to be chosen later; define a map $\mathcal{I}$ from $E$ to itself, which to a given process $Y$ associates the unique strong solution $\mathcal{I}(Y)$ to the SDE associated to $(X_0,b^Y,\sigma^Y)$.
It is then clear that $X$ is a solution to \eqref{eq:MKV} if and only if $X=\mathcal{I}(X)$ and that our task is reduced to showing that $\mathcal{I}$ is a contraction on $(E,d)$, for suitable choice of $\lambda$.
To this end, let us fix a parameter $q\in (2,\infty)$ sufficiently large so that $4/q+d/p_1<1$, $2/q+d/p_2<1$; then by estimate \eqref{eq:main-stab-2} for the choice $(\tilde p_2,\tilde q_2)=(p_2,q)$ and \cref{ass:MKV}, for any $t\in [0,T]$ it holds
\begin{align*}
\| \mathcal{I}(Y)^t_\cdot -  \mathcal{I}(Z)^t_\cdot \|_{L^m_\omega C_T}^q
& \lesssim \| b^Y-b^Z\|_{L^q_t W^{-1,p_1}_x}^q + \| \sigma^Y-\sigma^Z\|_{L^q_t L^{p_2}_x}^q \\
& \lesssim \int_0^t \mathbb{W}_m(\mathcal{L}(Y^s_\cdot),\mathcal{L}(Z^s_\cdot))^q \dd s\\
& \lesssim \int_0^t e^{\lambda q s} d_E(Y,Z)^q \dd s \lesssim \lambda^{-1}\, e^{\lambda q t} d_E(Y,Z)^q;
\end{align*}
overall, this implies the existence of another constant $\tilde{C}>0$ such that
\[
d_E(\mathcal{I}(Y),\mathcal{I}(Z))\leq \tilde{C} \lambda^{-1/q} d_E(Y,Z)
\]
which implies contractivity of $\mathcal{I}$ once we choose $\lambda$ large enough.
\end{proof}

We now pass to provide sufficient conditions for \cref{ass:MKV} in a practical case of interest, given by \textit{convolutional drifts}; in particular here we take $\mu_t=\cL(X_t)\in \cP(\R^d)$ and consider maps $B,\Sigma$ with linear dependence on $\mu$ by means of a convolution.

\begin{lemma}\label{lem:MKV-1}
For any $p\in [1,\infty]$, $f\in L^p_x$ and $g$ such that $\nabla g\in L^p_x$, it holds
\begin{equation*}
\| f \ast (\mu-\nu)\|_{W^{-1,p}_x}\lesssim \| f\|_{L^p_x} \mathbb{W}_1 (\mu,\nu),
\quad \| g \ast (\mu-\nu)\|_{L^p_x}\lesssim \| \nabla g\|_{L^p_x} \mathbb{W}_1 (\mu,\nu)
\quad \forall\, \mu,\nu \in \mathcal{P}(\R^d).
\end{equation*}
\end{lemma}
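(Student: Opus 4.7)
My plan is to reduce both bounds to deterministic translation estimates via a coupling argument. Fix an optimal $\mathbb{W}_1$-coupling $(Y,Z)$ with $\cL(Y)=\mu$, $\cL(Z)=\nu$ and $\mathbb{W}_1(\mu,\nu)=\mE[|Y-Z|]$. Writing $\tau_y h := h(\cdot - y)$, one has $h \ast (\mu-\nu)(x) = \mE[\tau_Y h(x) - \tau_Z h(x)]$ for any test $h$. The generalised Minkowski inequality $\|\mE[\cdot]\|_E \leq \mE[\|\cdot\|_E]$, applied with $E=L^p_x$ and $E=W^{-1,p}_x$ respectively, reduces the two claims to showing the pathwise bounds
\[
\|\tau_Y g - \tau_Z g\|_{L^p_x} \leq |Y-Z|\, \|\nabla g\|_{L^p_x}, \qquad \|\tau_Y f - \tau_Z f\|_{W^{-1,p}_x} \lesssim |Y-Z|\, \|f\|_{L^p_x};
\]
by translation invariance of both norms, these further reduce to deterministic translation estimates $\|\tau_h \eta - \eta\|_E \lesssim |h|\,(\cdot)$ with $h=Y-Z$.

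For the $L^p$ bound I would approximate $g$ by smooth $g^\eps$ with $\nabla g^\eps \to \nabla g$ in $L^p_x$ and use the fundamental theorem of calculus
\[
\tau_h g^\eps - g^\eps = -\int_0^1 h\cdot \nabla g^\eps(\cdot - \theta h)\dd \theta,
\]
followed by Minkowski in $\theta$ together with translation invariance of $L^p_x$, obtaining $\|\tau_h g^\eps - g^\eps\|_{L^p_x} \leq |h|\,\|\nabla g^\eps\|_{L^p_x}$; passing to the limit yields the claim for general $g$.

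For the $W^{-1,p}$ bound the natural route is duality: for any $\varphi \in W^{1,p'}_x$ with $1/p+1/p'=1$,
\[
\langle \tau_h f - f, \varphi\rangle = \langle f, \tau_{-h}\varphi - \varphi\rangle, \qquad \|\tau_{-h}\varphi - \varphi\|_{L^{p'}_x} \leq |h|\,\|\nabla\varphi\|_{L^{p'}_x},
\]
where the second inequality is proved exactly as in the previous paragraph. Taking the supremum over $\|\varphi\|_{W^{1,p'}_x} \leq 1$ gives $\|\tau_h f - f\|_{W^{-1,p}_x} \leq |h|\,\|f\|_{L^p_x}$, which completes the argument. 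The only mild obstacle lies at the endpoints $p\in\{1,\infty\}$, where the $W^{\pm 1,p}$ duality is subtler; since in the applications of this lemma only $p\in (d,\infty)$ matters, one may freely assume $p\in (1,\infty)$ and the argument is textbook.
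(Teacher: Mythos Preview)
Your proposal is correct and follows essentially the same approach as the paper: optimal coupling plus the translation estimate $\|\tau_h g - g\|_{L^p_x}\leq |h|\,\|\nabla g\|_{L^p_x}$ for the second bound, and duality against $W^{1,p'}_x$ for the first. The only cosmetic difference is that the paper applies duality at the level of the full convolution (writing $\langle f\ast(\mu-\nu),\varphi\rangle=\langle f,\varphi\ast(\tilde\mu-\tilde\nu)\rangle$ with $\tilde\mu$ the reflection of $\mu$, then invoking the already-proved $L^{p'}$ bound), whereas you dualise at the level of a single translation and then use Minkowski in $W^{-1,p}_x$; the two are equivalent and your version arguably avoids the reflection bookkeeping.
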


\begin{proof}
Let us first prove the claim for $g$.
Recall the basic fact that, for any $h_i\in \R^d$, it holds
\[ \| g(\cdot + h_1)-g(\cdot+h_2)\|_{L^p_x} \leq \| \nabla g\|_{L^p_x} |h_1-h_2|.\]
Let $(Y,Z)$ be an optimal coupling for $(\mu,\nu)$, then by definition of convolution it holds
\begin{align*}
\| g \ast (\mu-\nu)\|_{L^p_x}
& = \big\| \mE[g(\cdot - Y)-g(\cdot -Z)] \big\|_{L^p_x}
\leq \mE\big[ \| g(\cdot - Y)-g(\cdot -Z)\|_{L^p_x} \big]\\
& \leq \|\nabla g\|_{L^p_x}\, \mE[ |Y-Z|] =  \|\nabla g\|_{L^p_x}\, \mathbb{W}_1(\mu,\nu) .
\end{align*}
The case of $f$ then follows from a duality argument.
Observe that $\langle f\ast (\mu-\nu), g\rangle=\langle f, g \ast (\tilde \mu-\tilde \nu)\rangle$, where $\tilde{\mu}(A)=\mu(-A)$ is the reflection of $\mu$; it is clear that $\mathbb{W}_1(\tilde\mu,\tilde{\nu})=\mathbb{W}_1(\mu,\nu)$, therefore
\begin{align*}
|\langle f\ast (\mu-\nu), g\rangle|
\leq \| f\|_{L^p_x} \| g\ast (\tilde{\mu}-\tilde{\nu})\|_{L^{p'}} 
\leq \| f\|_{L^p_x} \| g\|_{W^{1,p'}_x} \, \mathbb{W}_1(\mu,\nu)
\end{align*}
which yields the conclusion by taking the supremum over all $g\in W^{1,p'}_x$ with $\| g\|_{W^{1,p'}_x}=1$.
\end{proof}

\begin{lemma}\label{lem:MKV-2}
Let $\sigma$ satisfy conditions ($\mathcal{H}^\sigma_1$)-($\mathcal{H}^\sigma_3$) from \cref{main-ass} with $q_2=\infty$, $p_2>d$ and constant $K$ and let $\pi:[0,T]\times \R^d\to \R^{d\times d}$ satisfy
\[\nabla \pi\in L^\infty_t L^{p_2}_x,\quad \sup_{t,x} |\pi_t(x)| \leq (1-\delta) K^{-1/2}\]
for some $\delta\in(0,1)$. Then $\Sigma_t(x,\mu):= \sigma_t(x)- (\pi_t\ast \mu)(x)$ satisfies conditions {\rm ii)}-{\rm iii)} from \cref{ass:MKV}.
\end{lemma}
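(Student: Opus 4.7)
The proof is a direct verification of conditions ii) and iii) of \cref{ass:MKV}. Before starting, I would pin down one interpretation issue: while $\Sigma$ is formally a function of path-space measures $\mu\in\cP(C_T)$, the convolution $(\pi_t\ast\mu)(x)$ is to be understood against the time-$t$ marginal $\mu_t\in\cP(\R^d)$, and the path-space Wasserstein distance controls the marginal one via $\mathbb{W}_1(\mu_t,\nu_t)\leq\mathbb{W}_1(\mu,\nu)\leq\mathbb{W}_m(\mu,\nu)$.

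For the first half of ii), the plan is to split $\nabla\Sigma_t(\cdot,\mu)=\nabla\sigma_t-\nabla\pi_t\ast\mu_t$: the assumption $q_2=\infty$ gives $\nabla\sigma\in L^\infty_t L^{p_2}_x$, and Young's convolution inequality with the probability measure $\mu_t$ (equivalently, Jensen plus Fubini) yields $\|\nabla\pi_t\ast\mu_t\|_{L^{p_2}_x}\leq\|\nabla\pi_t\|_{L^{p_2}_x}$, giving a uniform-in-$\mu$ bound. For the Wasserstein-Lipschitz half, observe that $\Sigma_t(\cdot,\mu)-\Sigma_t(\cdot,\nu)=-\pi_t\ast(\mu_t-\nu_t)$ and apply the second estimate of \cref{lem:MKV-1} to $g=\pi_t$ (whose gradient lies in $L^{p_2}_x$ uniformly in time), obtaining $\|\pi_t\ast(\mu_t-\nu_t)\|_{L^{p_2}_x}\lesssim\|\nabla\pi_t\|_{L^{p_2}_x}\,\mathbb{W}_1(\mu_t,\nu_t)\leq C\,\mathbb{W}_m(\mu,\nu)$.

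Condition iii) reduces to a triangle-inequality argument. The pointwise bound $\sup_{t,x}|\pi_t(x)|\leq(1-\delta)K^{-1/2}$ together with $\mu_t$ being a probability measure yields $|(\pi_t\ast\mu_t)(x)|\leq(1-\delta)K^{-1/2}$ pointwise in $(t,x)$; combining this with the ellipticity of $\sigma$ gives
\[
\delta K^{-1/2}|\xi|\leq|\Sigma_t(x,\mu)^\ast\xi|\leq\bigl(K^{1/2}+(1-\delta)K^{-1/2}\bigr)|\xi|,
\]
uniformly in $(t,x,\mu)$, which is exactly iii). No genuine obstacle appears: the statement is essentially a book-keeping check using \cref{lem:MKV-1} together with the smallness assumption on $\pi$, the only minor subtlety being the implicit reduction $\mu\mapsto\mu_t$ from $\cP(C_T)$ to $\cP(\R^d)$, which is precisely what makes the pointwise bound $\sup_{t,x}|\pi_t|\leq(1-\delta)K^{-1/2}$ strong enough to propagate through the convolution.
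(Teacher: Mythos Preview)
Your proposal is correct and follows essentially the same approach as the paper: both use Young's convolution inequality for the gradient bound and a triangle-inequality argument for ellipticity. You are in fact slightly more thorough than the paper, since you explicitly verify the Wasserstein-Lipschitz half of condition ii) via \cref{lem:MKV-1} and address the marginal interpretation $\mu\mapsto\mu_t$, both of which the paper's proof leaves implicit.
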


\begin{proof}
By standard properties of convolution, for any $\mu\in\cP(\R^d)$ it holds
\[ \| \nabla (\pi_t\ast \mu)\|_{L^{p_2}_x} \leq \| \nabla \pi_t\|_{L^{p_2}_x} \leq \| \nabla \pi\|_{L^\infty_t L^{p_2}_x} \]
%
%
which immediately implies a uniform bound on $\| \nabla \Sigma_t(\cdot,\mu)\|_{L^{p_2}_x}$; a similar argument shows uniform boundedness of $\Sigma_t(x,\mu)$.
By our assumptions and triangular inequality, it holds
\[
|\Sigma_t(x,\mu) \xi|
\geq |\sigma_t(x) \xi| - \| \pi\ast \mu\|_{L^\infty_{t,x}} |\xi|
\geq |\sigma_t(x) \xi| - \| \pi\|_{L^{\infty}_{t,x}}|\xi| \geq \delta K^{-1/2} |\xi|
\]
which shows uniform ellipticity.
\end{proof}

Combining \cref{lem:MKV-1} and \cref{lem:MKV-2} immediately implies the following

\begin{corollary}\label{cor:MKV-convolutional}
Let $b\in L^\infty_t L^{p_1}_x$ for some $p_1\in (d,\infty)$ and let $(\sigma,\pi)$ satisfy the hypothesis of \cref{lem:MKV-2}; consider
\[
B_t(X_t,\cL(X_t)):=(b_t\ast \cL(X_t))(X_t) \quad
\Sigma_t(X_t,\cL(X_t)):= \sigma_t(X_t)-(\pi_t\ast\cL(X_t))(X_t).
\]
Then strong wellposedness holds for the associated McKean--Vlasov SDE \eqref{eq:MKV}.
\end{corollary}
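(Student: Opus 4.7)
The plan is to reduce the statement to a direct verification of \cref{ass:MKV} with $m=1$ and then invoke \cref{thm:MKV}. The only conceptual subtlety is that \cref{ass:MKV} is phrased for coefficients acting on the path-space measure $\mu\in\mathcal{P}(C_T)$, while the convolutional drift and diffusion in the Corollary only see the time-$t$ marginal $\mu_t\in\mathcal{P}(\R^d)$. I would therefore first set, for $\mu\in\mathcal{P}(C_T)$,
\[
\tilde B_t(x,\mu):=(b_t\ast \mu_t)(x),\quad \tilde\Sigma_t(x,\mu):=\sigma_t(x)-(\pi_t\ast\mu_t)(x),
\]
where $\mu_t=(\mathrm{ev}_t)_\#\mu$ denotes the pushforward under $\mathrm{ev}_t:C_T\to\R^d$, $\omega\mapsto\omega_t$. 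Since $\mathrm{ev}_t$ is $1$-Lipschitz from $(C_T,\|\cdot\|_\infty)$ to $\R^d$, pushing forward an optimal coupling immediately gives $\mathbb{W}_1(\mu_t,\nu_t)\leq \mathbb{W}_1(\mu,\nu)$ for all $\mu,\nu\in\mathcal{P}(C_T)$, and this is the only transfer estimate we need.

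Next I would verify the three conditions of \cref{ass:MKV}. For condition i), the bound $\|\tilde B_t(\cdot,\mu)\|_{L^{p_1}_x}=\|b_t\ast\mu_t\|_{L^{p_1}_x}\leq\|b_t\|_{L^{p_1}_x}\leq\|b\|_{L^\infty_tL^{p_1}_x}$ follows by Young's convolution inequality (using that $\mu_t$ is a probability measure), while the Lipschitz estimate
\[
\|\tilde B_t(\cdot,\mu)-\tilde B_t(\cdot,\nu)\|_{W^{-1,p_1}_x}=\|b_t\ast(\mu_t-\nu_t)\|_{W^{-1,p_1}_x}\lesssim\|b_t\|_{L^{p_1}_x}\,\mathbb{W}_1(\mu_t,\nu_t)\lesssim\|b\|_{L^\infty_tL^{p_1}_x}\,\mathbb{W}_1(\mu,\nu)
\]
is a direct application of the first bound in \cref{lem:MKV-1} combined with the Lipschitz marginal estimate above. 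Conditions ii) and iii) are handled by \cref{lem:MKV-2}: the only modification is that the Lipschitz inequality
\[
\|\tilde\Sigma_t(\cdot,\mu)-\tilde\Sigma_t(\cdot,\nu)\|_{L^{p_2}_x}=\|\pi_t\ast(\mu_t-\nu_t)\|_{L^{p_2}_x}\lesssim\|\nabla\pi\|_{L^\infty_tL^{p_2}_x}\,\mathbb{W}_1(\mu,\nu)
\]
now follows from the second bound in \cref{lem:MKV-1} together with $\mathbb{W}_1(\mu_t,\nu_t)\leq\mathbb{W}_1(\mu,\nu)$, while the uniform ellipticity and the bounds on $\nabla\tilde\Sigma$ and $\tilde\Sigma$ are exactly what \cref{lem:MKV-2} produces (they hold pointwise in $t$ and do not involve the $\mu$-Lipschitz character).

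Having verified \cref{ass:MKV} with parameters $(p_1,p_2,m)=(p_1,p_2,1)$, I would then apply \cref{thm:MKV} to obtain strong existence, pathwise uniqueness and uniqueness in law for the SDE with coefficients $(\tilde B,\tilde\Sigma)$ and any initial law $\mu_0\in\mathcal{P}_1(\R^d)$. Since by construction $\tilde B_t(x,\mathcal{L}(X^t_\cdot))=(b_t\ast\mathcal{L}(X_t))(x)=B_t(x,\mathcal{L}(X_t))$ and similarly for $\tilde\Sigma$, solutions to the two formulations coincide, which yields the desired strong wellposedness. I do not anticipate any serious obstacle: the only point requiring a bit of care is the path-space-to-marginal reduction, but the $1$-Lipschitz property of the evaluation map makes it essentially free.
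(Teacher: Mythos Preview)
Your proposal is correct and follows exactly the route the paper intends: the paper's own proof is just the one-line remark that the Corollary follows by combining \cref{lem:MKV-1} and \cref{lem:MKV-2}, and you have spelled out precisely those verifications. In fact you are more careful than the paper in making explicit the path-space-to-marginal transfer via the $1$-Lipschitz evaluation map, which the paper leaves implicit.
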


\begin{remark}\label{rem:MKV-1}
It is clear that other variants of \cref{thm:MKV} are allowed; for instance in \cref{ass:MKV}-{\rm i)} can be modified by requiring the existence of parameters $(q_1,p_1)$ satisfying $4/q_1+d/p_1<1$ and a function $\ell\in L^{q_1}_t$ such that
\[
\| B_t(\cdot,\mu)\|_{L^{p_1}_x} \ell_t \leq C,\quad \| B_t(\cdot,\mu)-B_t(\cdot,\nu)\|_{W^{-1,{p_1}}_x} \leq \ell_t \, \mathbb{W}_m(\mu,\nu).
\]
In the style of \cref{rem:interpolation-stability}, one could also formulate a condition on $B$ involving $W^{-\nu,p_1}_x$-norms for other values $\nu\in [0,1]$, although the choice $\nu=1$ is natural in view of \cref{lem:MKV-1}.
\end{remark}

\begin{remark}\label{rem:MKV-2}
Unfortunately, our result in general does not cover the choice $\Sigma_t(x,\mu):=(\sigma_t\ast\mu)(x)$; the problem is that, even if $\sigma$ is uniformly elliptic, the same doesn't need to hold for $\sigma\ast\mu$, as cancellations might happen inside the convolution.
This difficulty can be overcome under more information on $\sigma$; for instance if $\sigma_t(x)=\gamma_t(x) A$, where $\gamma:[0,T]\times \R^d\to \R$ and $A$ is a fixed, uniformly elliptic $\R^{d\times d}$ matrix, then the continuity and uniform ellipticity requirements on $\sigma$ imply that $\gamma_t(x)\geq \delta>0$ (or $\gamma_t(x)\leq -\delta<0$, which doesn't change the analysis).
Due to the positivity of $\mu$, we then find
\begin{align*}
|(\sigma_t\ast \mu)^\ast (x) \xi| = |\gamma_t\ast \mu(x)| |A^\ast \xi| \geq \delta |A^\ast \xi|
\end{align*}
showing again uniform ellipticity of $\Sigma_t(x,\mu)=(\sigma_t\ast \mu)(x)$.
\end{remark}

\begin{remark}\label{rem:MKV-3}
Other examples of $B$ and $\Sigma$, satisfying conditions similar to \cref{ass:MKV}, are discussed in \cite[Section 2.1]{galeati2022distribution}. Among them let us mention the case of dependence on statistics of $\mu$, e.g. $B_t(x,\mu)=b_t(x-\langle \phi,\mu\rangle)$ where $\langle \phi,\mu\rangle:=\int_{C_T} \phi(\omega) \mu(\d \omega)$ and $\phi:C_T\to \R^d$ is a uniformly Lipschitz map.
Indeed, by properties of translations in Sobolev spaces and Kantorovich-Rubinstein duality, it holds
\[
\| b_t(\cdot-\langle \phi,\mu\rangle)-b_t(\cdot-\langle \phi,\nu\rangle)\|_{W^{-1,p}_x}
\lesssim \| b_t\|_{L^p_x} |\langle \phi,\mu-\nu\rangle|
\lesssim \| b_t\|_{L^p_x} \| \phi\|_{Lip} \mathbb{W}_1(\mu,\nu).
\]
\end{remark}

\subsection{Strong compactness of solutions to singular SDEs}\label{subsec:compactness}

We show here that, under quite minimal assumptions, the solutions to SDEs of the form \eqref{eq:sde}, driven by the same reference Brownian motion $W$, form a compact set in the \textit{strong} topology of $L^m_\omega C_T$. This is much stronger than standard tightness results for $\mathcal{L}(X)$. 

Throughout this section we will assume for simplicity $X_0=x\in \R^d$ to be deterministic and fixed, so the dependence of the solution on it will not appear in the sequel. Given a pair $(b,\sigma)$ satisfying \cref{main-ass}, we denote by $X^{b,\sigma}$ the associated solution to the It\^o SDE \eqref{eq:sde} with initial condition $x$; similarly, $\tilde X^{b,\sigma}$ denotes the solution to the correposnding Stratonovich SDE.

We start by dealing with autonomous drift and diffusion; thus we fix some parameters $p_1,p_2\in (d,+\infty)$ corresponding to $b\in L^{p_1}_x$, $\nabla \sigma\in L^{p_2}_x$.
\footnote{As in \cref{subsec:MKV}, $\nabla \sigma\in L^{p_2}_x$ imples $\sigma$ being globally H\"older, so that we can drop assumption ($\mathcal{H}^\sigma_2$) and the dependence on a modulus of continuity $h$.} 

\begin{theorem}\label{thm:compactness}
Consider a family of coefficients $\mathcal{A}\subset\{(b,\sigma): b\in L^{p_1}_x,\, \sigma\in L^\infty_x,\, \nabla\sigma\in L^{p_2}_x\}$ satisfying \cref{main-ass} in a uniform way;
namely, there exist constants $M,\, K>0$ such that for all $(b,\sigma)\in\mathcal{A}$ it holds $\|b \|_{L^{p_1}_x} + \|\sigma\|_{L^\infty_x}+\| \nabla \sigma\|_{L^{p_2}_x}\leq M$ and $|\sigma^\ast \xi|\geq K^{-1} |\xi|^2$ for all $\xi\in\R^d$.
Then the families $\{X^{b,\sigma}:(b,\sigma)\in\mathcal{A}\}$ and $\{\tilde X^{b,\sigma}:(b,\sigma)\in\mathcal{A}\}$ are precompact compact in $L^m_\omega C_T$, for all $m\in [1,\infty)$.
\end{theorem}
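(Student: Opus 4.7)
The plan is to establish sequential precompactness: for any sequence $(b^n,\sigma^n)\in\mathcal{A}$, extract a subsequence along which $X^n:=X^{b^n,\sigma^n}$ converges in $L^m_\omega C_T$. The argument combines (i) extraction of a limiting pair $(b^*,\sigma^*)$ by standard compactness, (ii) a localization step bringing us into the regime of the stability estimate \eqref{eq:main-stab-2}, and (iii) uniform Krylov--Khasminskii tail bounds to control the truncation error.

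\textbf{Extraction and stability after truncation.} Since $L^{p_1}_x$ is reflexive and $\|b^n\|_{L^{p_1}_x}\le M$, along a subsequence $b^n\rightharpoonup b^*$ weakly in $L^{p_1}_x$. The uniform bound on $\nabla\sigma^n\in L^{p_2}_x$ with $p_2>d$, combined with Morrey's embedding, gives equicontinuity of $\sigma^n$ with a uniform Hölder modulus $|x-y|^{1-d/p_2}$; together with $\|\sigma^n\|_{L^\infty_x}\le M$, Arzelà--Ascoli on each compact set and a diagonal extraction produce $\sigma^n\to\sigma^*$ locally uniformly. Weak lower semicontinuity of norms and pointwise passage in the ellipticity condition show $(b^*,\sigma^*)\in\mathcal{A}$, so $X^*:=X^{b^*,\sigma^*}$ is well-defined. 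One cannot apply \eqref{eq:main-stab-2} directly, since $L^{p_1}_x\hookrightarrow W^{-1,p_1}_x$ is not compact on $\R^d$ and $\sigma^n-\sigma^*$ need not decay at infinity. We circumvent this with a cutoff: fix $R>0$ and a smooth $\chi_R$ with $\chi_R\equiv 1$ on $B_R$, supported in $B_{2R}$, and define
\[
b^{n,R}:=b^*+\chi_R(b^n-b^*), \qquad \sigma^{n,R}:=\sigma^*+\chi_R(\sigma^n-\sigma^*).
\]
The differences are supported in $B_{2R}$. The Rellich--Kondrachov dual embedding $L^{p_1}(B_{2R})\hookrightarrow W^{-1,p_1}(B_{2R})$ is compact, and weak convergence of $b^n$ thus yields $\|b^{n,R}-b^*\|_{W^{-1,p_1}_x}\to 0$; locally uniform convergence and dominated convergence give $\|\sigma^{n,R}-\sigma^*\|_{L^{\tilde p_2}_x}\to 0$ for any $(\tilde p_2,\tilde q_2)\in\mathcal{J}_1$. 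For $n$ large (depending on $R$), $\sigma^{n,R}$ remains uniformly elliptic with slightly degraded constants. In the autonomous setting one may take $q_1=q_2=\infty$, so $p_1>d$ ensures $4/q_1+d/p_1<1$; hence \eqref{eq:main-stab-2} applies to $X^{n,R}$ versus $X^*$, yielding $\|X^{n,R}-X^*\|_{L^m_\omega C_T}\to 0$ for each fixed $R$.

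\textbf{Tail control and conclusion.} The uniform bounds on $\mathcal{A}$ together with \cref{lem:girsanov}--\cref{lem:khasminskii} and the BDG inequality produce moment bounds $\mathbb{E}[\sup_{t\le T}|X^n_t|^k]\le C_k$ uniformly in $n$, and analogously for $X^{n,R}$ uniformly in $(n,R)$. On the event $\{\tau^n_R\ge T\}$, where $\tau^n_R:=\inf\{t:|X^n_t|\ge R\}$, $X^n$ and $X^{n,R}$ coincide pathwise; Hölder and Markov inequalities then give
\[
\bigl\|X^n-X^{n,R}\bigr\|_{L^m_\omega C_T}\le C\,\mathbb{P}(\tau^n_R<T)^{1/(2m)}\le C\,R^{-k/(2m)},
\]
uniformly in $n$, and the same estimate with $X^*$ in place of $X^n$. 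Given $\varepsilon>0$, first choose $R$ so large that both of these bounds are $<\varepsilon/3$, then $n$ so large that $\|X^{n,R}-X^*\|_{L^m_\omega C_T}<\varepsilon/3$; the triangle inequality yields $X^n\to X^*$ in $L^m_\omega C_T$, establishing precompactness of the It\^o family. The Stratonovich case is identical, invoking \cref{cor:stratonovich} in place of \cref{thm:main-theorem}. The \emph{principal obstacle} is precisely the absence of a global compact embedding $L^{p_1}_x\hookrightarrow W^{-1,p_1}_x$ on $\R^d$, which is resolved by this localization-plus-tail-estimate scheme; everything else reduces to routine applications of the machinery developed in \cref{subsec:preliminaries,subsec:abstract}.
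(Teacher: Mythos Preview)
Your argument is correct and follows the same three-step scheme as the paper: extract a weak (resp.\ locally uniform) limit $(b^*,\sigma^*)$, localize by a cutoff so that the compact embedding $L^{p_1}_x\hookrightarrow W^{-1,p_1}_{x,\mathrm{loc}}$ upgrades weak to strong convergence and the stability estimate \eqref{eq:main-stab-2} applies, then remove the cutoff via uniform moment/tail bounds. Your truncation $\sigma^{n,R}=\sigma^*+\chi_R(\sigma^n-\sigma^*)$ is in fact a slight refinement of the paper's choice (which multiplies $\sigma^n$ directly by the cutoff $\psi$): yours keeps the diffusion uniformly elliptic as a small perturbation of $\sigma^*$, whereas $\psi\sigma^n$ is degenerate outside $\mathrm{supp}\,\psi$, a point the paper glosses over.
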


\begin{proof}
We focus on the It\^o case, the Stratonovich one being almost identical up to applying \cref{cor:stratonovich} instead of \cref{thm:main-theorem}.
By well-known results concerning the SDE \eqref{eq:sde} and our assumptions, the solutions $X^{b,\sigma}$ satisfy uniform moment estimates, namely for all $m\in [1,\infty)$ it holds $\sup_{(b,\sigma)\in \mathcal{A}} \| X^{b,\sigma}\|_{L^m_\Omega C_T}<\infty$; thus it suffices to show precompactness w.r.t. convergence in probability.

Given any sequence $(b^n,\sigma^n)_n\subset\mathcal{A}$, by the assumptions and weak compactness we can extract a (not relabelled) subsequence such that $b^n$ converge weakly in $L^{p_1}_x$ to some $b$ and $\sigma^n$ converge weakly-$\ast$ in $L^\infty_x$ to some $\sigma$, with the additional property that $\nabla \sigma^n$ converge weakly to $\nabla \sigma$ in $L^{p_2}_x$. We claim that $X^{b^n,\sigma^n}$ converge in probability to $X^{b,\sigma}$, which yields the conclusion in the It\^o case.

From now on we will adopt the shorter notation $X^n=X^{b^n,\sigma^n}$, $X=X^{b,\sigma}$.
To show the claim, let us fix some $\delta>0$ and choose $R>0$ large enough so that $\mathbb{P}(\| X^n\|_{C_T} >R)<\delta$, $\mathbb{P}(\| X\|_{C_T} >R)<\delta$, which is possible by the uniform moment estimates;
next, let us choose $\psi\in C^\infty_c$ such that $\psi\equiv 1$ on $B_R$ and $\psi\equiv 0$ on $B_{R+1}$, where $B_R:=\{x\in \R^d: |x|\leq R\}$. Denote by $\bar{X}^n$ the solutions associated to $(\psi b^n, \psi \sigma^n)$, similarly $\bar{X}$.
Clearly $\psi b^n$ converge weakly to $\psi b$, which together with the compact embedding $L^{p_1}_x \hookrightarrow W^{-1,p_1}_{x,loc}$ and the presence of the cutoff $\psi$ implies $\| \psi b^n-\psi b\|_{W^{-1,p_1}_x} \to 0$ as $n\to\infty$; similarly $\| \psi \sigma^n -\psi \sigma\|_{L^{p_2}_x}\to 0$ and so by \cref{thm:main-theorem} we deduce that $\bar{X}^n\to \bar{X}$ in $L^m_\omega C_T$.
On the other hand, by construction $\bar{X}^n$ and $X^n$ coincide as long as $X^n$ do not get outside $B_R$, similarly for $X$, thus we obtain
\begin{align*}
\lim_{n\to\infty}\mathbb{P}(\| X^n-X\|>\eps)
& \leq \lim_{n\to\infty} \mathbb{P}(\| X^n\|>R)+ \mathbb{P}(\| X\|>R)+\mathbb{P}(\| \bar{X}^n-\bar{X}\|>\eps)\\
& \leq 2\delta + \lim_{n\to\infty} \mathbb{P}(\| \bar{X}^n-\bar{X}\|>\eps) = 2\delta.
\end{align*}
By the arbitrariness of $\delta>0$, the conclusion follows.
\end{proof}

In the case of time-dependent coefficients, we cannot go through the same proof, since local compactness in weak topologies is not obvious anymore; however, under mild additional time regularity assumptions, we can invoke the partial compactness results from \cite[Section 9]{simon1986compact}. In the next statement, we fix $(p_1,q_1)\in \mathcal{J}_1$ and consider $q_2=\infty$, $p_2\in (d,+\infty)$.

\begin{corollary}\label{cor:strong-compactness}
Consider a family $\mathcal{A}\subset\{(b,\sigma): b\in L^{q_1}_t L^{p_1}_x,\, \sigma\in L^\infty_{t,x},\, \nabla\sigma\in L^{\infty}_t L^{p_2}_x\}$ with the following property:
there exist some parameters $s>0$ and $N\in \mathbb{N}$, as well as constants $M,\, K>0$ such that for all $(b,\sigma)\in\mathcal{A}$ it holds
\footnote{Here we follow the notation from \cite{simon1986compact}; for a Banach space $E$, the space $W^{s,1}_t E= W^{s,1}(0,T;E)$ is defined as the set of integrable functions $f:[0,T]\to E$ such that
\[ \| f\|_{W^{s,1}_t E} := \| f\|_{L^1_t E} + \llbracket f\rrbracket_{W^{s,1}_t E}<\infty, \quad \text{where} \quad  \llbracket f\rrbracket_{W^{s,1}_t E} := \int_{[0,T]^2} \frac{\| f_r-f_u\|_E}{|r-u|^{1+s}} \dd r\dd u.\]}
\begin{equation*}
\| b\|_{L^{q_1}_t L^{p_1}_x} + \| b\|_{W^{s,1}_t W^{-N,p_1}_x} + \|\sigma\|_{L^\infty_{t,x}}
+ \| \nabla \sigma\|_{L^\infty_t L^{p_2}_x} + \| b\|_{W^{s,1}_t W^{-N,p_2}_x} \leq M
\end{equation*}
and $|\sigma^\ast \xi|\geq K^{-1} |\xi|^2$ for all $\xi\in\R^d$.
Then the families $\{X^{b,\sigma}:(b,\sigma)\in\mathcal{A}\}$ and $\{\tilde X^{b,\sigma}:(b,\sigma)\in\mathcal{A}\}$ are precompact compact in $L^m_\omega C_T$, for all $m\in [1,\infty)$.
\end{corollary}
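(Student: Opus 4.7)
The plan is to mimic the argument of \cref{thm:compactness}, replacing the direct weak-compactness step with an application of Simon's compactness criterion \cite[Theorem 6]{simon1986compact} in order to exploit the additional time-regularity hypothesis. First, the uniform bounds defining $\mathcal{A}$, combined with Girsanov (\cref{lem:girsanov}) and the Khasminskii-type estimate of \cref{lem:khasminskii}, propagate to uniform moment bounds $\sup_{(b,\sigma)\in\mathcal{A}} \| X^{b,\sigma}\|_{L^{m'}_\omega C_T}<\infty$ for every $m'\in[1,\infty)$; by uniform integrability the claim reduces to showing that every sequence $(b^n,\sigma^n)\subset\mathcal{A}$ admits a subsequence along which $X^n := X^{b^n,\sigma^n}$ (resp.\ $\tilde X^n$) converges in probability in $C_T$.

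For the key compactness step, I would extract a locally convergent subsequence of coefficients via Simon's theorem. For every $R>0$, the compact embedding $L^{p_1}(B_R)\hookrightarrow\hookrightarrow W^{-\beta,p_1}(B_R)$ (valid for any $\beta>0$), combined with the uniform $L^{q_1}_t L^{p_1}(B_R)\cap W^{s,1}_t W^{-N,p_1}(B_R)$-bound, yields relative compactness of $\{b^n\}$ in $L^{q_1}_t W^{-\beta,p_1}(B_R)$; analogously, $W^{1,p_2}(B_R)\hookrightarrow\hookrightarrow L^{p_2}(B_R)$ together with the time-regularity hypothesis delivers relative compactness of $\{\sigma^n\}$ in $L^{\tilde q_2}_t L^{p_2}(B_R)$ for every finite $\tilde q_2$. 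A standard diagonal extraction then produces limits $(b,\sigma)$ satisfying \cref{main-ass} and a subsequence (not relabelled) such that, for every $R>0$, $b^n\to b$ in $L^{q_1}_t W^{-\beta,p_1}(B_R)$ and $\sigma^n\to\sigma$ in $L^{\tilde q_2}_t L^{p_2}(B_R)$. Since $(q_1,p_1)\in\mathcal{J}_1$ gives $d/p_1<1-2/q_1$, I would fix $\beta\in(d/p_1,\,1-2/q_1)$ and a finite $\tilde q_2$ large enough so that the first regime of \cref{rem:interpolation-stability}, i.e.\ estimate \eqref{eq:stability-variant}, becomes applicable for the chosen parameters.

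The last step is the cutoff-and-stability argument of \cref{thm:compactness}: given $\delta>0$, choose $R>0$ so that $\sup_n \mathbb{P}(\|X^n\|_{C_T}>R)<\delta$ and $\mathbb{P}(\|X\|_{C_T}>R)<\delta$, together with a cutoff $\psi\in C^\infty_c(\R^d)$ equal to $1$ on $B_R$ and supported in $B_{R+1}$; denote by $\bar X^n$, $\bar X$ the solutions driven by $(\psi b^n,\psi\sigma^n)$ and $(\psi b,\psi\sigma)$, modified harmlessly outside $B_{R+1}$ to preserve uniform ellipticity of the diffusion if needed. By construction the truncated coefficients satisfy \cref{main-ass} with uniform parameters and now converge globally on $\R^d$ in the relevant norms, whence \eqref{eq:stability-variant} implies $\|\bar X^n-\bar X\|_{L^m_\omega C_T}\to 0$. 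Since $\bar X^n\equiv X^n$ on $\{\|X^n\|_{C_T}\leq R\}$ (and similarly for $\bar X$), one deduces $\limsup_n \mathbb{P}(\|X^n-X\|_{C_T}>\varepsilon)\leq 2\delta$, and sending $\delta\downarrow 0$ concludes; the Stratonovich version is handled identically using \cref{cor:stratonovich}. The main delicate point is the simultaneous choice of the Simon triple $(B_0,B,B_1)$ and of the exponent $\beta$: the negative-regularity exponent must sit in the window $(d/p_1,1-2/q_1)$ where \cref{rem:interpolation-stability} applies while still giving compactness, and the ellipticity-preserving localisation of $\sigma$ is a minor but nontrivial technical point.
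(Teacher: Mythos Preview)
Your approach for the It\^o family is essentially the paper's: localise by a cutoff, invoke Simon's compactness to extract a subsequence with $b^n\to b$ in $L^{\tilde q_1}_t W^{-\beta,p_1}_x$ and $\sigma^n\to\sigma$ in $L^{\tilde q_2}_t L^{p_2}_x$ for some $\beta\in(d/p_1,1-2/q_1)$, and then apply the stability estimate \eqref{eq:stability-variant} from \cref{rem:interpolation-stability}. One cosmetic point: Simon's result, as the paper invokes it (Corollary~7 rather than Theorem~6), only delivers convergence in $L^{\tilde q_1}_t$ for $\tilde q_1<q_1$, not in $L^{q_1}_t$ itself; this is harmless since $(\tilde q_1,p_1)\in\mathcal{J}_\beta$ for $\tilde q_1$ close to $q_1$.

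The Stratonovich case, however, is not ``handled identically using \cref{cor:stratonovich}''. Neither estimate there applies with the convergence you have: \eqref{eq:stratonovich-stab-1} needs control of $\|\nabla(\sigma^n-\sigma)\|_{L^{\tilde q_2}_t L^{\tilde p_2}_x}$, which Simon does not provide (you only know $\nabla\sigma^n$ is bounded), while \eqref{eq:stratonovich-stab-2} requires $4/q_1+d/p_1<1$ and $\|b^n-b\|_{L^{q_1}_t W^{-1,p_1}_x}\to 0$, neither of which is assumed. The paper instead passes to the It\^o form and shows directly that the correction drifts converge: from Simon one gets $\sigma^n\to\sigma$ in $L^{\tilde q_2}_t W^{\beta,p_2}_x$ for any $\beta<1$, hence $\nabla\sigma^n\to\nabla\sigma$ in $L^{\tilde q_2}_t W^{\beta-1,p_2}_x$, and a product estimate (as in \eqref{eq:product-distributions}) yields $(\sigma^n\cdot\nabla)\sigma^n\to(\sigma\cdot\nabla)\sigma$ in $L^{\tilde q_2/2}_t W^{\beta-1,p_2}_x$; one then concludes via the fractional variant \eqref{eq:stability-variant} applied to the It\^o SDE with the corrected drift. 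You should add this step.
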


\begin{proof}
Introducing the cutoff $\psi$ as in the proof of \cref{thm:compactness}, we can always assume $(b,\sigma)$ to be compactly supported on $B_R$; thus the only relevant issue is to show the existence of a subsequence $(b^n,\sigma^n)$ converging to $(b,\sigma)$ in a suitable weak topology, which still has to be strong enough in order for our stability estimates to apply.
For any $\beta>0$, we can apply \cite[Corollary 7]{simon1986compact} for the choice $X=L^{p_1}_x$, $B=W_x^{-\beta,p_2}$ and $Y=W_x^{-\beta-N,p_2}$ to deduce the existence of a subsequence such that $b^n\to b$ in $L^{\tilde q_1}_t W^{-\beta,p_1}_x$ for any $\tilde{q}_1<q_1$, where $b\in L^{q_1}_t L^{p_1}_x$; similarly $\sigma^n\to \sigma$ in $L^{\tilde q_2}_t L^{p_2}_x$ for any $\tilde{q}_2<+\infty$, where $\sigma$ still satisfies the same bounds as the elements from $\mathcal{A}$. In particular, the SDE associated to $(b,\sigma)$ is well defined, with solution $X^{b,\sigma}$.
Since $(p_1,q_1)\in\mathcal{J}_1$ and $p_2>d$, we can always find $\tilde q_1,\, \tilde q_2$ and $\beta$ satisfying the above and such that additionally $\beta<1-2/q_1$, $(\tilde q_1,p_1)\in\mathcal{J}_\beta$ and $(\tilde q_2,p_2)\in\mathcal{J}_1$, which by the stability estimate \eqref{eq:stability-variant} implies $X^{b_n,\sigma_n}\to X^{b,\sigma}$ and thus the conclusion in the It\^o case.

The reasoning for Stratonovich SDEs is almost identical, but we give some additional details since we haven't precisely stated a version of \eqref{eq:stability-variant} in this case. By rewriting the SDE in the corresponding It\^o form, it suffices to verify that $\sigma^n\cdot\nabla \sigma^n$ converge strongly to $\sigma\cdot \nabla \sigma$ in suitable topologies; by \cite[Corollary 7]{simon1986compact}, we can show $\sigma^n\to \sigma$ in $L^{\tilde q_2}_t W^{\beta, p_2}_x$ for any $\beta<1$, thus $\sigma^n\to \sigma$ in $L^{\tilde q_2}_t W^{\beta-1, p_2}_x$ and by \eqref{eq:product-distributions} we can deduce that $\sigma^n\cdot\nabla \sigma^n \to \sigma\cdot\nabla \sigma$ in $L^{\tilde q_2/2}_t W^{\beta-1, p_2}_x$. As the argument holds for any $\tilde q_2<\infty$ and any $\beta<1$ and by assumption $p_2>d$, we can always choose such parameters in a way that $(\tilde q_2/2, p_2)\in\mathcal{J}_{1-\beta}$, yielding the conclusion again by an application of \eqref{eq:stability-variant}.
\end{proof}

\begin{remark}\label{rem:compactness-conjecture}
We conjecture that the time regularity condition, in the form of a uniform bound in $W^{s,1}_t W^{-N,p}_x$, is not needed in order to derive the strong compactness result of \cref{cor:strong-compactness}.
In order to avoid it, one would need a stability estimate where the difference of differnt coefficients is measured in a negative norm of both time and space, e.g. $\| b^1-b^2\|_{W^{-s,q_1}_t W^{-\beta,p_1}}$.
In this direction, let us mention that it is possible to obtain estimates for integrals of the form $\int_t f_r(X_r) \dd r$, by means of \textit{nonlinear Young integrals} (cf. \cite{catellier2016averaging, hu2017nonlinear}), for instance when $f\in C^{-\gamma}_t C^1_x$ for $\gamma<1/2$;
combining this result with \cref{lemm-no-drift}, giving instead estimates when $f$ has positive regulartiy in time but negative in space, and interpolation techniques, there is some hope to achieve estimates for $f$ enjoying (possibly very small) negative regularity in both variables.
We leave this problem for future investigations.
\end{remark}

\subsection{Improved Wong-Zakai Theorem}\label{subsec:wong-zakai}
In this section we apply our stability results in the context of Wong--Zakai approximation of singular SDEs.
The main difficulty lies in the fact that, due to the singularity of the drift $b$, the SDE associated to a smooth approximation $W^n$ in place of $W$ does not need to be wellposed, making even the correct formulation of the result unclear.
To overcome this difficulty, we follow the strategy introduced in  \cite{ling2021wong}, based on first replacing the coefficients $(b,\sigma)$ by some smooth approximations of them and then applying the Wong--Zakai result therein; one thus needs to control the overall error committed in this two-step approximation.
Compared to \cite{ling2021wong}, we improve the result from by not only allowing $b$ to be singular, but also considering $\sigma$ only Sobolev differentiable and not $C^2_x$.


We start by introduction some basic notations and conventions for this section.\\
For simplicity, here we will only consider autonomous coefficients $(b,\sigma)$ satisfying \cref{main-ass}, as well as deterministic initial data $x_0\in\R^d$.
We always work on the Wiener space $(\Omega,\mP,\mathcal{F},(\mathcal{F}_t)_{t\geq0})$, that is to say, $\Omega:=\{\omega\in\mathcal{C}([0,\infty),\mathbb{R}^d):\omega(0)=0\}$, $\mP$ is the Wiener measure defined on the Borel $\sigma$-algebra $\mathcal{F}$ of $\Omega$, $W_t(\omega):=\omega(t),\,t \geq 0$ is a Wiener process and $(\mathcal{F}_t)_{t\geq0}$ is the filtration generated by  $(W_t)_{t \geq 0}$.

We shall consider the following class of approximations:

\begin{definition}\cite[VI. Definition 7.1]{IkedaWatanabe}\label{apw}
By an approximation of the Wiener process $W_t$, we mean a family $\{W^n\}_{n\geq0}$ of $d$-dimensional continuous processes defined on the Wiener space $(\Omega,\mP,\mathcal{F},(\mathcal{F}_t)_{t\geq0})$ such that
\begin{itemize}
    \item[(1)] for every $\omega\in\Omega, t\mapsto W^n_t(\omega)$ is continuous and piecewise continuously differentiable;
    \item[(2)] $W^n_0(\omega)$ is $\mathcal{F}_{1/n}$-measurable and $\mE W^{n}_0=0$;
    \item[(3)] $W^n_{t+k/n}(\omega)=W^n_{t}(\theta_{k/n}\omega)+\omega(k/n)$, for every $k\in\mathbb{N}$, $t\geq 0$ and $\omega\in\Omega$, where $\theta_t$ is the shift operator defined by $(\theta_t\omega)(s)=\omega(t+s)-\omega(t)$;
    \item[(4)] there exists a positive constant $C$ such that
    $$\mE\big[|W^n_{0}|^6\big]\leq C n^{-3},\quad \mE\bigg( \int_0^{1/n}|\dot W^n_s\footnote{$\dot W^n_s:=\frac{\dd W^n_s}{\dd s}$}|\dd s\bigg)^6\leq C n^{-3}.$$
\end{itemize}
\end{definition}

\noindent From \cite[VI. Section 7]{IkedaWatanabe} we know that for such a approximation sequence $W^n$ we have 
$$\lim_{n\rightarrow\infty}\mE\bigg[ \sup_{0\leq t\leq T}|W_t-W_t^n|^2 \bigg]=0$$
for every $T>0$.
We further introduce the following notations for $t>0$ and $i,j\in \{1,\ldots, d\}$:
\begin{align}\label{sijn}
s_{ij}^n(t):=s_{ij}(t,n)
:= \frac{1}{2t} \,\mE\Big[ \int_0^t \big( W^{n}_{s,i} \dot W^{n}_{s,j}-W^{n}_{s,i} \dot W^{n}_{s,i} \big)\dd s \Big]
\end{align}
and
\begin{align}\label{cijn}
c_{ij}^n(t):=c_{ij}(t,n):=\frac{1}{t}\, \mE\Big[\int_0^t \dot W^{n}_{s,i} (W^{n}_{t,j}-W^{n}_{s,j}) \dd s\Big]
.
\end{align}
%
Notice that $s(t,n)$ is a skew-symmetric $d\times d$-matrix for each $t$ and $n$, i.e. $s_{ij}(t,n)=-s_{ji}(t,n)$. In order to control the convergence rate of the solutions $X^n$ driven by $W^n$ to $X$ driven by $W$, we will impose the following:

\begin{assumption}\label{asssc}
There exists a skew-symmetric $d\times d$-matrix $s$ and a rate function $f_n$ such that
\begin{align*}
\big|s_{ij}^n( n^{-1})-s_{ij}\big|\leq f_n,\quad \lim_{n\rightarrow\infty}f_n=0,\quad i,j=1,\ldots,d.
\end{align*}
\end{assumption}

\noindent Under \cref{asssc}, we define
\begin{align}\label{cij}
c_{ij}:=s_{ij}+\frac{1}{2}\delta_{ij},\quad i,j=1,\cdots,d.
\end{align}
Correspondingly, for any pair of functions $\varphi,\psi:\R^d\to \R^{d\times d}$ sufficiently regular, we define a vector field $c:\varphi\cdot\nabla \psi:\R^d\to \R^d$ componentwise by
\[
\big(c:\varphi \cdot\nabla \psi\big)_k(x) := \sum_{i,j,l=1}^d c_{ij}\, \varphi_{il}(x)\partial_l \psi_{jk} (x)\quad \forall\, k=1,\ldots,d.
\]
%
%

The next result gives us the dependence between the convergence rate of $X^n$ to $X$ and the coefficients $b$, $\sigma$; it is just a useful rewriting of \cite[IV. Theorem 7.2]{IkedaWatanabe}, making explicitly the dependence on the coefficients $(b,\sigma)$ and the rate of convergence. For more details on its derivation, we refer the interested reader to \cref{sec:Wong-Zakai}.

\begin{theorem}[Wong--Zakai]\label{smoothdrift}
Let $b\in C_x^1$,  $\sigma\in C_x^2$ and $(W^n_t)_{n\geq 1}$ be an approximation of $W_t$, in the sense of \cref{apw}, satisfying \cref{asssc}. Consider the SDEs
\begin{align*}
X_t=x_0+\int_0^tb(X_s) \dd s + \int_0^t\sigma(X_s) \dd W_s +  \int_0^t (c:\sigma\cdot\nabla \sigma)(X_s) \dd s,  
\end{align*}
and
\begin{align*}
X_t^{n} = x_0 + \int_0^tb(X_s^{n}) \dd s + \int_0^t\sigma(X_s^{n}) \dd W_{s}^n.
\end{align*}
Then for each $T>0$ there exists a constant $C$ such that
\begin{align}\label{errorest}
\mE\Big[ \sup_{t\in[0,T]}|X_t-X_t^{n}|^2\Big]\leq \exp\Big( C \big(1 + \Vert b\Vert_{C_x^1}^2 + \| \sigma\|_{C^1_x}^4 + \| \sigma\|_{C^0_x}^2\, \Vert\sigma\Vert_{C_x^2}^2 \big) \Big) (f^2_n+ n^{-\frac{1}{5}}),
\end{align}
where $f_n$ is from \cref{asssc} and $C$ depends on $T$ and $\max_{i,j} |c_{ij}|$, but not on $b$, $\sigma$ and $n$.
\footnote{It is claimed in \cite[Theorem 3.9]{ling2021wong} that the resulting error in \eqref{errorest} is $f_n^2+ n^{-1+\eps}$, where $\eps$ can be chosen arbitrarily small; unfortunately this is not the case, as there are other error terms of the form $n^{-\eps}$ which become too slow if $\eps$ is chosen too small. For more details, see \cref{sec:Wong-Zakai}.}
\end{theorem}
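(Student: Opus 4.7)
The plan is to quantify the classical Wong--Zakai argument of Ikeda--Watanabe, tracking explicitly how the constants depend on $\|b\|_{C^1_x}$, $\|\sigma\|_{C^1_x}$ and $\|\sigma\|_{C^2_x}$, and how the sixth-moment bound in \cref{apw}(4) propagates into the final rate. The key observation is that $W^n$ is piecewise $C^1$ between break points $k/n$, so that on each subinterval $I_k := [k/n,(k+1)/n]$ the process $X^n$ is $C^1$ and solves the ODE $\dot X^n_t = b(X^n_t) + \sigma(X^n_t) \dot W^n_t$.

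First I would Taylor-expand $b$ and $\sigma$ to first order around $X^n_{k/n}$ and use the bootstrap bound $|X^n_t - X^n_{k/n}| \lesssim 1/n + \sup_{s \in I_k}|W^n_s - W^n_{k/n}|$ to derive, for $t \in I_k$,
\[
X^n_t = X^n_{k/n} + (t-k/n)b(X^n_{k/n}) + \sigma(X^n_{k/n})(W^n_t - W^n_{k/n}) + B^n_k(t) + R^n_k(t),
\]
where $B^n_k(t)$ collects ``area-type'' bracket terms of the schematic form $\partial_l \sigma_{j\cdot}(X^n_{k/n}) \sigma_{li}(X^n_{k/n})\int_{k/n}^t (W^n_{s,i}-W^n_{k/n,i})\dot W^n_{s,j}\dd s$, and $R^n_k(t)$ is a higher-order remainder coming from second-order Taylor terms and the drift.

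Next, by taking conditional expectations with respect to $\mathcal{F}_{k/n}$ and using the definitions \eqref{sijn}--\eqref{cijn} together with \cref{asssc}, the sum $\sum_k \mE[B^n_k((k+1)/n)\mid \mathcal{F}_{k/n}]$ is identified, up to an error of order $f_n$, with a Riemann sum approximating $\int_0^\cdot (c:\sigma\cdot\nabla\sigma)(X^n_s)\dd s$. The centered part $\sum_k (B^n_k - \mE[B^n_k\mid \mathcal{F}_{k/n}])$ is a discrete martingale, controlled by discrete BDG together with the moment bounds on $\int_{I_k}|\dot W^n_s|\dd s$. Combining this expansion with the target equation for $X$ and the Lipschitz bounds $\|\nabla b\|_\infty \leq \|b\|_{C^1_x}$, $\|\nabla\sigma\|_\infty \leq \|\sigma\|_{C^1_x}$ and $\|\nabla(c:\sigma\cdot\nabla\sigma)\|_\infty \lesssim \|\sigma\|_{C^1_x}^2 + \|\sigma\|_{C^0_x}\|\sigma\|_{C^2_x}$, I would set up a Gr\"onwall-type inequality
\[
\mE\Big[\sup_{s\leq t}|X_s-X^n_s|^2\Big] \leq C\,(f_n^2 + n^{-1/5}) + C_* \int_0^t \mE\Big[\sup_{r\leq s}|X_r-X^n_r|^2\Big] \dd s,
\]
with $C_*$ polynomial in the norms of $(b,\sigma)$ appearing in \eqref{errorest}; Gr\"onwall then yields the claim.

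The main obstacle is the error bookkeeping that fixes the rate at $n^{-1/5}$. The remainder $R^n_k$ contains a term of size $\|\sigma\|_{C^2_x}\sup_{s\in I_k}|W^n_s-W^n_{k/n}|^2 \int_{I_k}|\dot W^n_s|\dd s$, whose squared sum over $k\leq nT$ is, via H\"older and \cref{apw}(4) alone, only of order $n^{-1/2}$. A sharper bound is obtained by splitting each subinterval into a ``good'' event $\{\int_{I_k}|\dot W^n_s|\dd s \leq n^{-\alpha}\}$ and its complement, estimating the atypical regime by Markov's inequality and the sixth-moment bound, and optimizing the threshold $\alpha$ against the resulting contributions; this balance saturates at the exponent $1/5$. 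This is precisely the step where the sketch of \cite[Theorem 3.9]{ling2021wong} is incomplete, and its full execution is deferred to \cref{sec:Wong-Zakai}.
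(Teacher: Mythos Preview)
Your high-level plan --- Taylor expand $b$ and $\sigma$ around $X^n$ at grid points, identify the area-type bracket as the source of the $c:\sigma\cdot\nabla\sigma$ correction, and close via a Gr\"onwall inequality whose constant is polynomial in $\|b\|_{C^1_x}$, $\|\sigma\|_{C^1_x}$, $\|\sigma\|_{C^2_x}$ --- is exactly what the paper does, since the appendix is nothing but a careful re-reading of Ikeda--Watanabe with constants tracked.

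Where your sketch diverges, and where there is a genuine gap, is in the origin of the exponent $1/5$. The paper (following Ikeda--Watanabe) does \emph{not} work on the fine grid $I_k=[k/n,(k+1)/n]$ alone; it introduces a second, coarser scale $\tilde\delta = N(\delta)\,\delta$ with $N(\delta)=\lfloor n^{\eps/4}\rfloor$, and the decomposition $X^n-X=H_1+H_2+H_3+H_4$, $H_2=I_1+\cdots+I_5$, $I_5=I_{51}+\cdots+I_{55}$ is carried out over these blocks. The rate $n^{-1/5}$ then arises from balancing the collection of error terms $N(\delta)^{3/2}\delta^{1/2}+N(\delta)^4\delta+N(\delta)^{-1}$, the last of which comes from the terms $I_2,I_3,I_4,I_{53}$ (differences $W^\delta-W$ at block endpoints and the centered area fluctuation over a block); optimizing $\eps$ yields $\eps=4/5$ and $N(\delta)^{-1}\sim n^{-1/5}$. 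No good/bad-event truncation appears anywhere.

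By contrast, the term you single out as the bottleneck --- the second-order Taylor remainder $T_k\lesssim\|\sigma\|_{C^2_x}\bigl(\int_{I_k}|\dot W^n|\bigr)^3$ --- is not limiting at all: a direct Minkowski bound and the sixth-moment assumption in \cref{apw}(4) already give $\bigl\|\sum_{k\leq nT}T_k\bigr\|_{L^2_\omega}\leq nT\cdot(Cn^{-3})^{1/2}=O(n^{-1/2})$, so $\mE[(\sum_k T_k)^2]=O(n^{-1})$, and your proposed splitting into $\{\int_{I_k}|\dot W^n|\leq n^{-\alpha}\}$ and its complement does not improve this, nor does any choice of $\alpha$ produce the exponent $1/5$. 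In other words, you have identified the right structural step (the delicate error bookkeeping) but the wrong term and the wrong mechanism; the correct route, as carried out in \cref{sec:Wong-Zakai}, is the two-scale blocking and the optimization in $N(\delta)$.
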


In order to control the error we commit by applying \cref{smoothdrift} at the level of smooth approximations of our coefficients, we introduce the following classes for $b$ and $\sigma$. 

\begin{definition}\label{bph}
Let $b\in L^{p_1}_x$ for some $p_1\in (d,+\infty)$, $p_1\geq 2$, and $r\in[0,\infty)\rightarrow[0,\infty)$ be an increasing function.
For a sequence $(b^n)_{n \geq 1}$ of continuous functions $b^n \colon \mathbb{R}^d \to \mathbb{R}^d$,
we write $(b^n)_{n\geq 1}\in \mathcal{A}(b,p_1,r)$ if:
\begin{itemize}
\item[(i)] $b^n\in C_x^1$, 
\item[(ii)]  $\Vert b-b^n\Vert_{L_x^{p_1}}\rightarrow0$ as $n\rightarrow\infty$,
\item[(iii)] $\Vert b^n\Vert_{C_x^1}\leq r(n)\Vert b\Vert_{L_x^{p_1}}$.
\end{itemize}
\end{definition}

\begin{definition}\label{sigma}
Let $p_2\in (d,+\infty)$, $p_2\geq 2$, and let $\sigma$ satisfy condition $(\mathcal{H}^\sigma_1)$ from \cref{main-ass} with constant $K$, $\nabla \sigma\in L^{p_2}_x$; let $r\in[0,\infty)\rightarrow[0,\infty)$ be an increasing function.
For a sequence $(\sigma^n)_{n \geq 1}$  of regular functions $\sigma^n \colon \mathbb{R}^{d\times d} \to \mathbb{R}^d$, we write $(\sigma^n)_{n\geq 1}\in \mathcal{A}(\sigma,p_2,r)$ if:
\begin{itemize}
\item[(i)] $\sigma^n$ satisfy condition $(\mathcal{H}^\sigma_1)$ with the same constant $K$, uniformly in $n$,
\item[(ii)] $\sigma^n\in C_x^2$,
\item[(iii)]  $\Vert \sigma-\sigma^n\Vert_{L_x^{p_2}}\rightarrow0$ as $n\rightarrow\infty$,
\item[(iv)] $\sup_{n\in\mathbb{N}} \| \nabla \sigma^n\|_{L^{p_2}_x}<\infty$ and $\| \sigma^n\|_{C^1_x}^2 + K \Vert \sigma^n\Vert_{C_x^2}\leq r(n)\Vert \sigma\Vert_{L_x^{p_2}}$.
\end{itemize}
\end{definition}

\noindent We are now ready to state our result.

\begin{theorem}\label{improved-wong-zakai}
Let $(p_1,p_2)\in (d,+\infty)^2$, $p_i\geq 2$, and let
$b \in L_x^{p_1}$, $\sigma$ satisfy condition $(\mathcal{H}^\sigma_1)$ in \cref{main-ass} and $\nabla \sigma\in L^{p_2}_x$.
Assume further that $(W^n)_{n \geq 1}$ is an approximation of $W$, in the sense of  \cref{apw}, satisfying \cref{asssc}. 
Let $X$ be the solution to
\begin{align}\label{wzksds}
X_t=x_0+\int_0^tb(X_s) \dd s+\int_0^t\sigma(X_s) \dd W_s + \int_0^t (c:\sigma\cdot\nabla \sigma)(X_s) \dd s
\end{align}
for $c$ as defined in \eqref{cij}.
Let $(b^n)_{n\geq 1}\in \mathcal{A}(b,p_1,r)$, $(\sigma^n)_{n\geq 1}\in \mathcal{A}(\sigma,p_2,r)$ be approximation sequences for $b$ and $\sigma$ such that
\begin{align}\label{hfn}
 \lim_{n\rightarrow\infty} \exp\Big(C r(n)^2 (\Vert b\Vert_{L_x^{p}}^2+\Vert \sigma\Vert_{L_x^{p}}^2)\Big) (f^2_n + n^{-1/5})=0
\end{align}
where $C$ is the constant appearing in \eqref{errorest} and $f_n$ is given in \cref{asssc}. Then we have
\begin{align}\label{error}
\lim_{n\rightarrow\infty} \mE\Big[\sup_{0\leq t\leq T}|X_t-X_t^{n}|^2\Big]=0,
\end{align}
where $(X_t^{n})_{t\geq0}$ is the solution to the equation
\begin{align*}
X_t^{n}=x_0+\int_0^t b^n(X_s^{n}) \dd s + \int_0^t\sigma^n(X_s^{n}) \dot W_{s}^n \dd s.
\end{align*}
\end{theorem}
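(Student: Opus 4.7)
The strategy is a triangle inequality via an auxiliary process $\tilde X^n$, defined as the strong It\^o solution, driven by the \emph{same} reference Brownian motion $W$, of the regularized version of \eqref{wzksds} with the Wong--Zakai correction already built in:
\[
\tilde X^n_t = x_0 + \int_0^t [b^n + c:\sigma^n\cdot\nabla\sigma^n](\tilde X^n_s)\dd s + \int_0^t \sigma^n(\tilde X^n_s) \dd W_s.
\]
By construction this is both the It\^o SDE associated, via the correction matrix $c$ of \eqref{cij}, to the regular ODE satisfied by $X^n$, and the smoothed analogue of \eqref{wzksds}. Writing $\sup_{[0,T]}|X-X^n|\le \sup_{[0,T]}|X-\tilde X^n|+\sup_{[0,T]}|\tilde X^n-X^n|$, the two summands are treated respectively by the stability theory of \cref{sec:stability} and by the classical Wong--Zakai result \cref{smoothdrift}.

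For the second summand, since $b^n\in C^1_x$, $\sigma^n\in C^2_x$ and the additional drift $c:\sigma^n\cdot\nabla\sigma^n$ is smooth, \cref{smoothdrift} applies to $(b^n,\sigma^n)$ and yields
\[
\mE\Big[\sup_{t\in[0,T]}|\tilde X^n_t - X^n_t|^2\Big] \leq \exp\Big(C\big(1 + \|b^n\|_{C^1_x}^2 + \|\sigma^n\|_{C^1_x}^4 + \|\sigma^n\|_{C^0_x}^2\|\sigma^n\|_{C^2_x}^2\big)\Big)(f_n^2 + n^{-1/5}).
\]
The growth bounds in Definitions \ref{bph}--\ref{sigma} allow one to bound the prefactor by $\exp\big(Cr(n)^2(\|b\|_{L^{p_1}_x}^2+\|\sigma\|_{L^{p_2}_x}^2)\big)$; multiplied by $f_n^2+n^{-1/5}$, this vanishes precisely by the standing assumption \eqref{hfn}.

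For the first summand, both $X$ and $\tilde X^n$ solve It\^o SDEs of the form \eqref{eq:sde} with drifts $\beta := b+c:\sigma\cdot\nabla\sigma$, $\beta^n := b^n+c:\sigma^n\cdot\nabla\sigma^n$ and diffusions $\sigma$ and $\sigma^n$. Since the additional term $c:\sigma^n\cdot\nabla\sigma^n$ is bounded in $L^{p_2}_x$ uniformly in $n$ (via $\|\sigma^n\|_{L^\infty_x}\|\nabla\sigma^n\|_{L^{p_2}_x}$ from Def.~\ref{sigma}), both pairs satisfy \cref{main-ass} uniformly in $n$, in the $L^{p_1}_x+L^{p_2}_x$-drift version discussed in the remark following that assumption. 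Picking a finite auxiliary $q_1$ with $4/q_1+d/p_1<1$ and a large finite $\tilde q_2$, estimate \eqref{eq:main-stab-2} gives
\[
\Big\|\sup_{[0,T]}|X-\tilde X^n|\Big\|_{L^2_\omega} \lesssim \|b-b^n\|_{L^{q_1}_tW^{-1,p_1}_x} + \|c:(\sigma\cdot\nabla\sigma-\sigma^n\cdot\nabla\sigma^n)\|_{L^{q_1}_tW^{-1,p_1}_x} + \|\sigma-\sigma^n\|_{L^{\tilde q_2}_t L^{p_2}_x},
\]
and the first and third terms vanish as $n\to\infty$ directly by Definitions \ref{bph}--\ref{sigma} (using autonomy of the coefficients and the embedding $L^{p_1}_x\hookrightarrow W^{-1,p_1}_x$).

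The main technical obstacle is the middle term, since only the $L^{p_2}_x$-convergence of $\sigma^n\to\sigma$ and the uniform $L^{p_2}_x$-boundedness of $\nabla\sigma^n$ are at hand. Splitting
\[
\sigma\cdot\nabla\sigma - \sigma^n\cdot\nabla\sigma^n = (\sigma-\sigma^n)\cdot\nabla\sigma + \sigma^n\cdot\nabla(\sigma-\sigma^n),
\]
the first summand is bounded in $L^{p_2/2}_x$ by $\|\sigma-\sigma^n\|_{L^{p_2}_x}\|\nabla\sigma\|_{L^{p_2}_x}\to 0$ via H\"older. The second, of the form ``uniformly bounded function times the derivative of a function converging in $L^p$'', is precisely the structure handled by the negative Sobolev product rule \eqref{eq:product-distributions-2}: for any $\bar p$ with $1/\bar p + 1/p_2 < 1$,
\[
\|\sigma^n\cdot\nabla(\sigma-\sigma^n)\|_{W^{-1,\bar p}_x} \lesssim \big(\|\sigma^n\|_{L^\infty_x}+\|\nabla\sigma^n\|_{L^{p_2}_x}\big)\|\sigma-\sigma^n\|_{L^{\bar p}_x}.
\]
Interpolating between the uniform $L^\infty_x$-bound on $\sigma-\sigma^n$ and its $L^{p_2}_x$-convergence to zero, the right-hand side tends to zero for any $\bar p\in[p_2,\infty)$, and a standard Sobolev embedding delivers the desired $W^{-1,p_1}_x$-convergence. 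Combining all contributions, the stability piece vanishes as $n\to\infty$, yielding \eqref{error}.
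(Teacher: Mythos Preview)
Your argument follows essentially the same route as the paper: introduce the auxiliary process $\tilde X^n$ (the paper's $X_{n,\cdot}$), bound $|\tilde X^n-X^n|$ by the smooth Wong--Zakai \cref{smoothdrift} combined with the growth condition \eqref{hfn}, and bound $|X-\tilde X^n|$ via the stability theory of \cref{sec:stability}. Your decomposition $(\sigma-\sigma^n)\cdot\nabla\sigma+\sigma^n\cdot\nabla(\sigma-\sigma^n)$ is the mirror image of the paper's $(\sigma^n-\sigma)\cdot\nabla\sigma^n+\sigma\cdot\nabla(\sigma^n-\sigma)$ and works equally well.

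There is one imprecision worth flagging. You place the correction-term difference in $L^{q_1}_t W^{-1,p_1}_x$ and then appeal to a ``standard Sobolev embedding'' to land there from $W^{-1,\bar p}_x$ with $\bar p\ge p_2$. On $\R^d$ there is no such embedding when $p_1<p_2$: the spaces $W^{-1,\bar p}_x$ and $W^{-1,p_1}_x$ are not nested. The packaged estimate \eqref{eq:main-stab-2} as literally stated requires the full drift difference in a single $W^{-1,p_1}_x$, which is not available here. The paper sidesteps this by arguing directly at the level of \cref{stab-general} (as in the proof of \cref{cor:stratonovich}): the piece $(\sigma-\sigma^n)\cdot\nabla\sigma$ is estimated in $L^{p_2/2}_x$ with $(p_2/2,\infty)\in\mathcal J_0$, and the piece $\sigma^n\cdot\nabla(\sigma-\sigma^n)$ in $W^{-1,p_2}_x$ via \eqref{eq:product-distributions-2}, without ever passing to the exponent $p_1$. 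With this adjustment your proof is complete and matches the paper's.
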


\begin{proof}
Observe that, under our assumptions,  $c:\sigma\cdot\nabla\sigma\in L_x^{p_2}$, therefore the SDE \eqref{wzksds} is well-posed (either by \cite{XXZZ2020} or the facts recalled in \cref{subsec:preliminaries}).
Let $X_{m,t}$ be the solution to
\begin{align*}
X_{m,t}=x_0+\int_0^t b^m(X_{m,s})\dd s +\int_0^t \sigma^m(X_{m,s})\dd W_s
+ \int_0^t (c:\sigma^m\cdot\nabla\sigma^m)(X_{m,s})\dd s.
\end{align*}
By \cref{smoothdrift}, we know that for any $m\geq1$ it holds
\begin{align*}
 \mE\Big[\sup_{0\leq t\leq T}|X_{m,t}-X_{m,t}^{n}|^2\Big]
 & \lesssim \exp\Big( C (\Vert b^m\Vert_{C^1_x}^2 + \| \sigma\|_{C^1_x}^4 + \| \sigma\|_{C^0_x}^2 \Vert \sigma^m\Vert_{C^2_x}^2)\Big) (f^2_n + n^{-1/5})\\
 & \lesssim \exp\Big( C\, r(m)^2(\Vert b\Vert_{L_x^{p}}^2+\Vert \sigma\Vert_{L_x^{p}}^2) \Big) (f^2_n + n^{-1/5}),
\end{align*}
where $C$ is independent of $m$ and $n$ and $X_{m,t}^{n}$ is the solution to
\begin{align*}
X_{m,t}^{n}=x_0+\int_0^t b^m(X_{m,s}^{n})\dd s+\int_0^t\sigma^m(X_{m,s}^{n})\dot W_{s}^n \dd s.
\end{align*}
By choosing $m=n$  and using the condition \eqref{hfn}, we obtain
\begin{align*}
\lim_{n\rightarrow\infty} \mE\Big[\sup_{0\leq t\leq T}|X_{n,t}-X_{n,t}^{n}|^2\Big]=0.
\end{align*}
On the other hand, arguing similarly to the proof of \cref{cor:stratonovich}, using the estimates
\begin{align*}
    & \| c:(\sigma^m-\sigma)\cdot\nabla \sigma^m\|_{L^{p_2/2}_x}
    \lesssim \| \sigma^m-\sigma\|_{L^{p_2}_x} \| \sigma^m\|_{L^{p_2}_x}
    \lesssim \| \sigma^m-\sigma\|_{L^{p_2}_x},\\
    & \| c: \sigma\cdot\nabla (\sigma^m-\sigma)\|_{W^{-1,p_2}_x}
    \lesssim \| \nabla (\sigma^m-\sigma)\|_{W^{-1,p_2}_x} ( \| \sigma\|_{L^\infty_x} + \| \nabla \sigma\|_{L^{p_2}_x})
    \lesssim \| \sigma^m-\sigma\|_{L^{p_2}_x},
\end{align*}
it is not difficult to show that
\begin{align*}
    \mE\Big[\sup_{0\leq t\leq T}|X_t-X_{n,t}|^2\Big]
    \lesssim \Vert b-b^n\Vert_{L_x^{p_1}}^2  + \Vert \sigma-\sigma^n\Vert_{L_x^{p_2}}^2.
\end{align*}
Overall we obtain
\begin{align*}
\mE\Big[\sup_{0\leq t\leq T}|X_t-X_t^{n}|^2\Big]
&\lesssim \Vert b-b^n\Vert_{L_x^{p_1}}^2+\Vert \sigma-\sigma^n\Vert_{L_x^{p_2}}^2 +  \mE\Big[\sup_{0\leq t\leq T}|X_{n,t}-X_{n,t}^{n}|^2\Big]
\end{align*}
which implies \eqref{error} by letting $n\rightarrow\infty$. 
\end{proof}

\subsection{Other applications of stability estimates in the literature}\label{subsec:other}

We briefly discuss here two further examples of implicit applications of the stability estimates from \cref{stab-general} in the literature. In this case we do not provide any new results, yet we believe that highlighting this step makes the arguments from the papers \cite{le2021taming} and \cite{hao2022strong} clearer.

\subsubsection{Euler-Maruyama schemes for singular SDEs}\label{subsec:euler-maruyama}

Similarly to \cref{subsec:wong-zakai}, when dealing with singular drifts one cannot try to simulate the solution to the SDE directly, as the unboundedness of $b$ can be a major source of errors.
Again, one can circumvent this problem by introducing a two step approximation, based on first mollifying $b$, and then applying an Euler--Maruyama scheme to the SDE with approximate drift $b^n$. 
This idea has seen precursors in \cite{kohatsu2017weak}, in the study of weak error rates for integrable drifts, and in \cite{de2019numerical}, in the one-dimensional setting with $b$ only enjoying some negative H\"older regularity.
Here we follow the recent work \cite{le2021taming}, which studied the strong error in the general multidimensional setting, with $(b,\sigma)$ satisfying \cref{main-ass}; therein this two-step approximation was directly unified in a unique way and thus stability estimates were only used in a very implicit way.

Let $(b,\sigma)$ satisfy \cref{main-ass} and $X$ be the solution to \eqref{eq:sde}; there are two different (yet similar) ways one can proceed.

On one hand, given a sequence of smooth bounded approximations $(b^n)_n$ of $b$, one can introduce the solution $\tilde{X}^n$ associated to $(b^n,\sigma)$ and then apply the Euler--Maruyama scheme of step $n$ to $(b^n,\sigma)$, yielding a numerical approximation $\tilde{X}^{n,n}$;
the overall error splits into $e=e^1+e^2$, associated respectively to $X^n-\tilde X^n$ and $\tilde X^n-\tilde{X}^{n,n}$.
The first error $e^1$ can be estimated by means of either \cref{thm:main-theorem} or the more refined estimates from \cref{subsec:main-proof} and yields a quantity in the style of $\varpi_n$ as defined in \cite{le2021taming}.
The error $e_2$ instead is the purely numerical one, which has to be treated by different techniques and will result in a polynomial decay rate $n^{-\alpha}$.
Alternatively, one can proceed more directly as in \cite{le2021taming} and, given the approximations $(b^n)_n$, compare $X$ to the solutions $X^n$ to
%
\begin{align}\label{eq:Euler-Maruyama}
    \dd X_t^n=b^n_t(X_{k_n(t)}^n)\dd t+\sigma_t(X_{k_n(t)}^n)\dd W_t,\quad X^n|_{t=0}=X_0^n,
\end{align}
where $	k_n(t)= \lfloor tn \rfloor/n$, $\lfloor \cdot \rfloor$ denoting the integer part.
For fixed $n\in\mathbb{N}$, we are in the setting of \cref{stab-general} by considering $Y_t:=X_t^n$, corresponding to \eqref{eq:perturbed-sde} for the choice
$$R_t^1:=b^n_t(X^n_{k_n(t)})-b_t(X^n_t),\quad \quad R_t^2:=\sigma_t(X^n_{k_n(t)})-\sigma_t(X^n_t).$$
Assuming for the moment that $(Y,R^1,R^2)$ satisfy \cref{ass:perturbation}, an application of \cref{stab-general} then yields
\begin{align*}
  \Big\| \sup_{t\in [0,T]} |X_t-X^n_t| \Big\|_{L_\omega^m} \lesssim 
  \| X_0-X_0^n\|_{L_\omega^m} + \Big\| \sup_{t\in [0,T]} | V_t| \Big\|_{L_\omega^{m\gamma}}  
\end{align*}
for $V$ as defined in \eqref{eq:process-V}. By our assumptions, the properties of $u$ and martingale inequalities, it is easy to deduce that
\begin{equation}\label{eq:numerical-error}\begin{split}
    \Big\| \sup_{t\in [0,T]} |V_t|\Big\|_{L^m_\omega} 
    & \lesssim \Big\| \int_0^T |(b^n-b)_t(X^n_t)| \dd t \Big\|_{L^{m\gamma}_\omega} + \Big\| \int_0^T |(b^n_t(X^n_t)-b^n_t(X^n_{k_n(t)})| \dd t \Big\|_{L^{m\gamma}_\omega}\\
    & \quad + \Big\| \int_0^T |(\sigma_t(X^n_t)-\sigma_t(X^n_{k_n(t)})| |D^2 u_t(X^n_t)|\dd t \Big\|_{L^{m\gamma}_\omega}\\
    & \quad + \Big\| \Big( \int_0^T |(\sigma_t(X^n_t)-\sigma_t(X^n_{k_n(t)})|^2 \dd t\Big)^{1/2} \Big\|_{L^{m\gamma}_\omega}.
\end{split}\end{equation}
Again, the first term gives rise to the approximation error $e^1$ related to $\varpi_n$, while all the others are numerical errors producing a decay $n^\alpha$.
This is the point where the fine analysis on the properties of the process $X^n$ conducted in \cite[Section 5]{le2021taming}, also based on novel tools like stochastic sewing techniques, must kick in.
The same kind of results in fact verify that $(Y,R^1,R^2)$ verify \cref{ass:perturbation}, see for instance Theorem 5.1 and Lemma 5.14 from \cite{le2021taming}.

Nonetheless, we find it important to stress how reaching an estimate of the form \eqref{eq:numerical-error} can be performed in the framework of well established SDE tools and do not need any substantially new techniques.

\subsubsection{Interacting particle systems}
We now set ourselves in the setting of \cite{hao2022strong}, which studied the propagation of chaos for interacting particle systems with singular kernel of Krylov-R\"ockner type. Therein again \cref{stab-general} was applied, although implicitly; we make this point clear as follows.

For any $N\in\mathbb{N}$, consider the interacting particle system
\begin{align}\label{eq:particle-equ}
    \dd X_t^{N,i}=B_t(X_t^{N,i},\mu_t^N)\dd t+ \sigma_t(X_t^{N,i})\dd W_t^i,,\quad \mu_t^N:=\frac{1}{N}\sum_{i=1}^N\delta_{X_t^{N,i}}\quad \forall\,i=1,\cdots,N,
\end{align}  
where $X^{N,i}_0=\xi^i$ for some i.i.d. $\{\xi^i\}_{i\in\mathbb{N}}$, $\{W^i\}_{i\in\mathbb{N}}$ are independent Brownian motions, $\sigma$ satisfies properties $(\mathcal{H}^\sigma_1)-(\mathcal{H}^\sigma_3)$ from \cref{main-ass} and $B$ is of the form considered in \cite{hao2022strong}.
As $N\to\infty$, each particle $X^{i,N}$ of system \eqref{eq:particle-equ} is expected to converge to an independent copy of the associated McKean--Vlasov SDE
\begin{equation}\label{eq:MKV-interacting}
\dd X_t^i = B_t(X^i,\mu_t)\dd t + \sigma_t(X^i_t)\dd W^i_t,\quad \mu_t=\mathcal{L}(X^i_t),
\end{equation}
a property which is usually referred to as propagation of chaos; in \cite{hao2022strong} the authors show that such convergence holds in a strong sense, for suitable singular $B$, see Assumption \textbf{$({\rm H})^b$} therein.

To explain how their techniques are related to ours, let us first recall some basic facts.
Under \textbf{$({\rm H})^b$}, equation  \eqref{eq:MKV-interacting} is wellposed and the associated drift $b^\mu_t(x):=B_t(x,\mu_t)$ satisfies an $L^{q_1}_t L^{p_1}_x$ integrability condition
\footnote{Actually, the authors in \cite{hao2022strong} use the local spaces $L^{q_1}_t \tilde{L}^{p_1}_x$, but at the level of exposition this doesn't change much.}.
Moreover by exchangeability of \eqref{eq:particle-equ}, it suffices to show convergence for $i=1$.
Finally, by setting $X^{N,1}=Y$ and
$$R_t^1:=B_t(X^{N,1}_t,\mu^N_t)-b^\mu_t(X^{N,1}_t), \quad R_t^2:=0,$$
we are in the setting of \cref{stab-general}.
The triple $(Y,R^1,R^2)$ satisfies \ref{ass:perturbation} thanks to an application of the \textit{partial Girsanov transform}, cf. \cite[Lemma 5.3 (ii)]{hao2022strong}; observe that $X^{1,N}$ is measurable w.r.t. to the filtration generated by $\{\xi^i, W^i\}_{i=1}^N$, which is covered by \cref{ass:perturbation}-iii) by taking $Z=(\xi^i,W^i)_{i=2}^N$.
Therefore by \cref{stab-general} and the usual properties of $u$ we deduce that
\begin{align*}
\Big\| \sup_{t\in [0,T]} |X^{N,1}_t-X^1_t| \Big\|_{L^m_\omega} \lesssim \Big\| \int_0^T |B_t(X^{N,1}_t,\mu^N_t)-b^\mu_t (X^{N,1}_t)| \dd t\Big\|_{L^{m\gamma}_\omega}
\end{align*}
The convergence of the quantity on the r.h.s. to $0$ as $N\to \infty$ is established in \cite[Lemma 6.1]{hao2022strong} for $m\gamma=2$ (but the other cases follow similarly), which yields the desired propagation of chaos property.

Let us point out two interesting facts coming from our analysis:
\begin{itemize}
\item[i)] In order to apply our stability results, although some key information on the processes $X^{N,i}$ is needed, the main requirement is the regularity of $b^\mu$. This can be better than the one of the original $B$, especially in the case of convolutional drifts $B_t(x\mu)=(\phi_t\ast\mu)(x)$, thus there is hope to establish such results also in cases where $\phi$ does not satisfy the Krylov--R\"ockner condition, but $b^\mu$ does.
\item[ii)] We took $\sigma$ measure independent as in \cite{hao2022strong}, but this doesn't really play a role in our analysis; thus it might be possible to extend several result to the $\mu$-dependent diffusion as well. The key problem would then be to establish some analogue of \cite[Lemma 6.1]{hao2022strong} in this setting.
\end{itemize}


\appendix
\section{More details on the proof of \cref{smoothdrift}}\label{sec:Wong-Zakai}

%
In this appendix, we follow carefully the proof of \cite[IV. Theorem 7.2]{IkedaWatanabe} in order to indicate the precise dependence on the regularity of the coefficients $(b,\sigma)$ and the resulting convergence as stated it \cref{smoothdrift}.

It will be convenient in several passages to keep the same notations as in \cite{IkedaWatanabe}, based on the use of a parameter $\delta\ll 1$ and a function $N(\delta): (0,1]\to \mathbb{N}$ satisfying the hypothesis of \cite[IV. Lemma 7.1]{IkedaWatanabe}, so let us explain shortly how it translates into our framework.
The parameter $\delta$ corresponds to $n^{-1}$, while the function $N(\delta)$ is given by $N(\delta):=\lfloor \delta^{-\eps/4} \rfloor$ for a suitable parameter $\eps\in (0,1)$ to be chosen later on.
Observe that by construction, for any $\eps\in (0,1)$ it holds
\begin{align}\label{ndelta}
     \lim_{\delta \to 0^+} N(\delta)^4\delta =0, \quad \lim_{\delta\to 0} N(\delta)=+\infty.
\end{align}
For such choice of $N(\delta)$, we set $\tilde \delta:=N(\delta)\delta$ and
$$\quad [s]^+(\tilde \delta)=(k+1)\delta, \quad [s]^{-}(\tilde \delta)=k\tilde \delta\quad \text{ if } \quad k\tilde \delta\leq s<(k+1)\tilde \delta.$$
Several terms, previously indexed over $n$ in \cref{subsec:wong-zakai}, will now be indexed over $\delta$, e.g. we will write $ X^\delta$, $W^\delta$, $f_\delta$ and $s(t,\delta)$ in place of $X^n$, $W^n$, $f_n$ and $s(t,n)$
Finally, throughout this appendix we will adopt the convention that, whenever the symbol $\lesssim$ appears in the estimates, the hidden constant may depend on $T$, $\max_{ij} |c_{ij}|$ or $d$, but no other parameters.

Before going to the actual proof of \cref{smoothdrift}, we need to collect a few basic observations coming from that of \cite[IV. Lemma 7.1]{IkedaWatanabe}.
Therein it is shown that, for any $k\in \mathbb{N}$, it holds
\[
k\delta s_{ij}(k\delta,\delta)=k\delta s_{ij}(\delta,\delta)
\]
and moreover that
\begin{equation}\label{eq:useful-identity}
s_{ij}(\delta,\delta)-c_{ij}(k\delta,\delta)+\frac{1}{2}\delta_{ij}
= \frac{1}{k \delta} \mE\big[W^{\delta}_{0,i} (0) W_{k\delta,j} - W^{\delta,}_{0,i} W^{\delta}_{0,j}\big]; 
\end{equation}
for the choice $k=N(\delta)$, this readily implies the estimate
\begin{align*}
    \big|s_{ij}(\delta,\delta)-c_{ij}(\tilde\delta,\delta)+\frac{1}{2}\delta_{ij}\big|
    \lesssim \tilde{\delta}^{-1} [\tilde \delta^{\frac{1}{2}} \delta^{\frac{1}{2}} + \delta] \lesssim N(\delta)^{-\frac{1}{2}}
\end{align*}
which combined with \cref{asssc} implies
\begin{equation}\label{eq:estim-cij}
    |c_{ij}(\tilde\delta,\delta)-c_{ij}|^2 \lesssim f_\delta^2 + N(\delta)^{-1}.
\end{equation}
We can now proceed with the

\begin{proof}[Proof of \cref{smoothdrift}]
First by \cite[IV. 7 (7.47)]{IkedaWatanabe}, for every $s\leq t$ we have
\begin{align}\label{Xn-t-s}
|X^\delta_t-X_s^\delta|\leq \Vert b\Vert_\infty(t-s) + \Vert \sigma\Vert_\infty \int_s^t | \dot W_s^\delta|\dd s.
\end{align}
 As in \cite[IV. 7 (7.48)]{IkedaWatanabe}, we decompose $X^n-X$ into the following four terms:
\begin{align}
     \label{4-terms-dcom}
     X_t^\delta-X_t:=H_1(t)+H_2(t)+H_3(t)+H_4(t)
\end{align}
where 
\begin{align*} 
     H_1(t) =\int_{[t]^{-}(\tilde \delta)}^t\sigma(X_s^\delta)\dd W_s^\delta & 
     -\int_{[t]^{-}(\tilde \delta)}^t\sigma(X_s)\dd W_s\nonumber 
     - \int_{[t]^{-}(\tilde \delta)}^t (c:\sigma\cdot\nabla \sigma)(X_s)\dd s,
\end{align*}
\begin{align*} 
     H_2(t)=\int^{[t]^{-}(\tilde \delta)}_{\tilde \delta}\sigma(X_s^\delta)\dd W_s^\delta
     &-\int^{[t]^{-}(\tilde \delta)}_{\tilde \delta}\sigma(X_s)\dd W_s\nonumber
     - \int^{[t]^{-}(\tilde \delta)}_{\tilde \delta} (c:\sigma\cdot\nabla \sigma)(X_s)\dd s,
\end{align*}
\begin{align*} 
         H_3(t)=\int_0^{\tilde \delta}\sigma(X_s^\delta)\dd W_s^\delta
         &-\int_0^{\tilde \delta}\sigma(X_s)\dd W_s\nonumber
         - \int_0^{\tilde \delta} (c:\sigma\cdot\nabla\sigma)(X_s)\dd s
\end{align*}
 and 
\begin{align*} 
    H_4(t)=\int_0^tb(X^\delta_s)\dd s-\int_0^tb(X_s)\dd s.
\end{align*}
Easily observe that 
\begin{align}
    \label{diff-H4}
    \mE\Big[\sup_{0\leq t\leq T}|H_4(t)|^2\Big]\leq \Vert \nabla b\Vert_\infty^2\int_0^T\mE[|X^\delta_s-X_s|^2]\dd s.
\end{align}
We rewrite 
$$H_1(t)=H_{11}(t)-H_{12}(t)-H_{13}(t).$$
Following from  \cite[IV. 7 (7.54)]{IkedaWatanabe} we get 
\begin{align*}
     \mE\Big[\sup_{0\leq t\leq T}|H_{13}(t)|^2\Big]
     \leq \Vert\sigma\cdot\nabla\sigma\Vert^2_\infty\, N(\delta)^2 \delta^2,
\end{align*}
 and \cite[IV. 7 (7.55)]{IkedaWatanabe} gives us
 \begin{align*}
      \mE\Big[\sup_{0\leq t\leq T}|H_{11}(t)|^2\Big]
      \lesssim \Vert\sigma\Vert^2_\infty\, N(\delta)^{\frac{3}{2}}\, \delta^{\frac{1}{2}}.
 \end{align*}
 Note that 
 \begin{align*}
     H_{12}=\sigma(X_{[t]^-(\tilde \delta)})(W_t-W_{[t]^-(\tilde \delta)})+\int_{[t]^{-}(\tilde \delta)}^t[\sigma(X_s)-\sigma(X_{[t]^-(\tilde \delta)})]\dd W_s=:H_{121}(t)+H_{122}(t).
 \end{align*}
 Then \cite[IV. 7 (7.56)]{IkedaWatanabe} yields 
 \begin{align*}
      \mE\Big[\sup_{0\leq t\leq T}|H_{121}(t)|^2\Big]
      \lesssim \Vert\sigma\Vert^2_\infty \, N(\delta)^{\frac{1}{2}}\, \delta^{\frac{1}{2}} 
 \end{align*}
 and \cite[IV. 7 (7.57)]{IkedaWatanabe} implies
  \begin{align*}
      \mE\Big[\sup_{0\leq t\leq T}|H_{122}(t)|^2\Big]
      \lesssim \Vert\nabla\sigma\Vert^2_\infty (\Vert\sigma\Vert^2_\infty+\Vert b\Vert_\infty^2)\, \tilde\delta,
 \end{align*}
 where inside these passages we used the inequality
 $$\mathbb{E} \big[ |X_s(x)-x|^2\big]\lesssim \Vert\sigma\Vert^2_\infty\, s+\Vert b\Vert_\infty^2\, s^2.$$
 Putting all these estimates together we get 
 \begin{align}\label{diff-H1}
    \mE\Big[\sup_{0\leq t\leq T}|H_1(t)|^2\Big]
    \lesssim \big(1 + \Vert\sigma\Vert^4_{C^1_x} + \| b\|_{\infty}^4\big) 
    \, N(\delta)^{\frac{3}{2}}\, \delta^{\frac{1}{2}}.
\end{align}
Since $|H_3|\leq \sup_{0\leq t\leq T}|H_1(t)|$, 
 \begin{align}
    \label{diff-H3}
    \mE\Big[\sup_{0\leq t\leq T}|H_3(t)|^2\Big]
    \leq  \mE\Big[\sup_{0\leq t\leq T}|H_1(t)|^2\Big]
    \lesssim
    \big(1 + \Vert\sigma\Vert^4_{C^1_x} + \| b\|_{\infty}^4\big) \, N(\delta)^{\frac{3}{2}}\, \delta^{\frac{1}{2}}.
\end{align}
We are only left with estimating $H_2$. By integration by parts, we have 
\begin{align*}
  \int^{(k+1)\tilde \delta}_{k\tilde \delta}\sigma(X_s^\delta)\dd W_s^\delta=&\sigma(X^\delta_{k\tilde \delta})[W^\delta_{(k+1)\tilde\delta}-W_{k\tilde \delta}^\delta]\\&+   \int^{(k+1)\tilde \delta}_{k\tilde \delta} \nabla \sigma(X_s^\delta)[\sigma(X_s^\delta)\dot W_s^\delta+b(X_s^\delta)]\cdot[W_{(k+1)\tilde \delta}^\delta-W_{k\tilde \delta}^\delta]\dd s
  \\=:&J_1(k)+J_2(k).
\end{align*}
Also \begin{align*}
    J_1(k)=&\sigma(X_{k\tilde \delta-\delta}^\delta)(W_{(k+1)\tilde \delta}-W_{k\tilde \delta})+[\sigma(X_{k\tilde \delta}^\delta)-\sigma(X_{k\tilde \delta-\delta}^\delta)](W^\delta_{(k+1)\tilde\delta}-W^\delta_{k\tilde\delta})
    \\&+\sigma(X_{k\tilde \delta-\delta}^\delta)(W^\delta_{(k+1)\tilde\delta}-W_{(k+1)\tilde\delta})+\sigma(X_{k\tilde \delta-\delta}^\delta)(W_{k\tilde\delta}-W_{k\tilde\delta}^\delta)
    \\=&:J_{11}(k)+J_{12}(k)+J_{13}(k)+J_{14}(k).
\end{align*}
Write 
$$H_2(t)=I_1(t)+I_2(t)+I_3(t)+I_4(t)+I_5(t)$$
where 
\begin{align*}
    I_1(t)=\sum_{k=1}^{m(t)-1}J_{11}(k)+\int_{\tilde\delta}^{[t]^-(\tilde\delta)}\sigma(X_s)\dd W_s,
\end{align*}
with $m(t)$ defined by $m(t):= \lfloor t/\tilde \delta\rfloor$, and
\begin{align*}
    I_i(t)=\sum_{k=1}^{m(t)-1}J_{1i}(k),\quad i=2,3,4,
\end{align*}
\begin{align*}
    I_5(t)=\sum_{k=1}^{m(t)-1}J_{2}(k)-\int_{\tilde\delta}^{[t]^-(\tilde\delta)}(c:\sigma\cdot\nabla\sigma)(X_s)\dd s.
\end{align*}
From  \cite[IV. 7 (7.63)]{IkedaWatanabe} we have 
\begin{align*}
 \mE\Big[\sup_{0\leq t\leq T}|I_1(t)|^2\Big]
 \lesssim \| \nabla \sigma\|_{\infty}^2 \int_0^T\mE[|X^\delta_s-X_s|^2]\dd s+(\Vert \sigma\Vert^2_\infty+\Vert b\Vert_\infty^2)\, N(\delta)^2\delta 
\end{align*}
and \cite[IV. 7 (7.64)]{IkedaWatanabe} shows
\begin{align*}
 \mE\Big[\sup_{0\leq t\leq T}|I_2(t)|^2\Big]
 \lesssim \Vert\nabla\sigma\Vert^2_\infty\, N(\delta)^{-1}. 
\end{align*}
 \cite[IV. 7 (7.65), (7.66)]{IkedaWatanabe} give us
 \begin{align*}
 \mE\Big[\sup_{0\leq t\leq T}|I_3(t)|^2\Big] + \mE\Big[\sup_{0\leq t\leq T}|I_4(t)|^2\Big]
 \lesssim \Vert\sigma\Vert^2_\infty\, N(\delta)^{-1}
\end{align*}
For $I_5$ we rewrite
$$I_5(t)=I_{51}(t)+I_{52}(t)+I_{53}(t)+I_{54}(t)+I_{55}(t)$$
with
\begin{align*}
 I_{51}^{(h)}(t)&=\sum_{k=1}^{m(t)-1} \int^{(k+1)\tilde \delta}_{k\tilde \delta}\sum_{i,l,j=1}^d[(\sigma_{il}\partial_l \sigma_{jh})(X_s^\delta)-(\sigma_{il}\partial_l \sigma_{jh})(X_{k\tilde\delta}^\delta)] \dot W_{s,i}^{\delta} [W_{(k+1)\tilde\delta,j}^{\delta}-W_{s,j}^{\delta}]\dd s,\\
 I_{52}^{(h)}(t)&=\sum_{k=1}^{m(t)-1} \int^{(k+1)\tilde \delta}_{k\tilde \delta}\sum_{j,l=1}^d(b_{l}\partial_l \sigma_{jh})(X_s^\delta)[W_{(k+1)\tilde\delta,j}^{\delta}-W_{s,i}^\delta]\dd s,
\\
 I_{53}^{(h)}(t)&=\sum_{k=1}^{m(t)-1} \int^{(k+1)\tilde \delta}_{k\tilde \delta}\sum_{i,l,j=1}^d[(\sigma_{il}\partial_l \sigma_{jh})(X_{k\tilde\delta}^\delta)[\dot W_{s,i}^\delta(W_{(k+1)\tilde\delta,j}^\delta-W_{s,j}^\delta)-c_{ij}(\tilde \delta,\delta)]\dd s,\\
 I_{54}^{(h)}(t)&=\sum_{k=1}^{m(t)-1} \int^{(k+1)\tilde \delta}_{k\tilde \delta}\sum_{i,l,j=1}^dc_{ij}[(\sigma_{il}\partial_l \sigma_{jh})(X_{k\tilde\delta}^\delta)-(\sigma_{il}\partial_l \sigma_{jh})(X_s)]\dd s,\\
  I_{55}^{(h)}(t)&=\sum_{k=1}^{m(t)-1}\sum_{i,l,j=1}^d[\tilde\delta(\sigma_{il}\partial_l \sigma_{jh}))(X_{k\tilde\delta}^\delta)-c_{ij}]. 
\end{align*}
First we get from  \cite[IV. 7 (7.67)]{IkedaWatanabe}
\begin{align*}
  \mE\Big[\sup_{0\leq t\leq T}|I_{55}(t)|^2\Big]
  \lesssim \sum_{i,j=1}^d\Vert \sigma\cdot\nabla\sigma\Vert^2_\infty (c_{ij}(\tilde \delta,\delta)-c_{ij})^2
  \end{align*}
  and by \cite[IV. 7 (7.68)]{IkedaWatanabe}
  \begin{align*}
  \mE\Big[\sup_{0\leq t\leq T}|I_{54}(t)|^2\Big]
  \lesssim \Vert\nabla(\sigma\cdot\nabla\sigma)\Vert^2_\infty \Big[\int_0^T\mE[|X^\delta_s-X_s|^2]\dd s+(\Vert \sigma\Vert^2_\infty +\Vert b\Vert^2_\infty)\, N(\delta)^2\delta\Big].\end{align*}
  Furthermore by  \cite[IV. 7 (7.69)]{IkedaWatanabe}
\begin{align*}
  \mE\Big[\sup_{0\leq t\leq T}|I_{52}(t)|^2\Big]
  \lesssim \Vert b\cdot\nabla\sigma\Vert^2_\infty \, N(\delta)^2\delta
  \end{align*}
  and for $I_{51}$ \cite[IV. 7 (7.70)]{IkedaWatanabe} yields
  \begin{align*}
  \mE\Big[\sup_{0\leq t\leq T}|I_{51}(t)|^2\Big]
  \lesssim (\Vert \sigma\Vert^2_\infty+\Vert b\Vert_\infty^2)\, N(\delta)^4\delta.\end{align*}
  \cite[IV. 7 (7.71)]{IkedaWatanabe} shows that
   \begin{align*}
  \mE\Big[\sup_{0\leq t\leq T}|I_{53}(t)|^2\Big]
  \lesssim \Vert\sigma\Vert^2_\infty\, \Big(N(\delta)^3\delta+( \sum_{i,j=1}^dN(\delta)^{-1} c_{ij}^*(\tilde \delta,\delta))^2+N(\delta)^{-1} \Big), \end{align*}
  for $c_{ij}^\ast$ defined by
  \begin{align*}
    c_{ij}^*(\tilde \delta,\delta):=\, N(\delta)\,c_{ij}(N(\delta)\delta,\delta)-(N(\delta)-1)c_{ij}((N(\delta)-1)\delta,\delta); 
  \end{align*}
  Applying identity \eqref{eq:useful-identity} for the choices $k=N(\delta)$, $k=N(\delta)-1$, one can see that
\begin{align*}
      | N(\delta)^{-1} c^\ast_{ij}(\tilde\delta,\delta)|
      & = \Big| N(\delta)^{-1} \Big[ s_{ij} (\delta,\delta) + \frac{1}{2} \delta_{ij} \Big] -(\tilde\delta)^{-1} \mE\Big[ W^{\delta}_{0,i}\, \big(W_{N(\delta)\delta,j}-W_{(N(\delta)-1)\delta,j}\big)\Big] \Big|\\
      & \lesssim N(\delta)^{-1} + (\tilde\delta)^{-1} \delta^{\frac{1}{2}} \delta^{\frac{1}{2}} = N(\delta)^{-1}.
\end{align*}
  Therefore
   \begin{align*}
  \mE\Big[\sup_{0\leq t\leq T}|I_{53}(t)|^2\Big]
  \lesssim  \Vert\sigma\Vert^2_\infty
  \big(N(\delta)^3 \delta +
  N(\delta)^{-1}\big), \end{align*}
  Collecting all the terms related to $I_{5}$ and applying \eqref{eq:estim-cij}, we get the following:
    \begin{align*}
  \mE\Big[\sup_{0\leq t\leq T}|I_{5}(t)|^2\Big]
  \lesssim & \Vert\nabla(\sigma\cdot\nabla\sigma)\Vert^2_\infty
  \int_0^T\mE[|X^\delta_s-X_s|^2 ]\dd s
  \nonumber\\&+\Big(1 + \Vert \sigma\Vert^8_{C^2_x}+\Vert b\Vert^4_\infty\Big) \Big( f_\delta^2 + N(\delta)^4\delta +
  N(\delta)^{-1}\Big).   \end{align*}
  Therefore
  \begin{align}\label{H2-diff}
   \mE\Big[\sup_{0\leq t\leq T}|H_{2}(t)|^2\Big]  \lesssim
   &\Big( \| \nabla \sigma\|_{\infty}^2 + \Vert\nabla(\sigma\cdot\nabla\sigma)\Vert^2_\infty \Big) \int_0^T\mE[|X^\delta_s-X_s|^2]\dd s
  \nonumber\\&+\Big(1 + \Vert \sigma\Vert^8_{C^2_x}+\Vert b\Vert^4_\infty\Big) \Big( f_\delta^2 + N(\delta)^4\delta +
  N(\delta)^{-1}\Big).
  \end{align}
  By combining \eqref{4-terms-dcom}, \eqref{diff-H4}, \eqref{diff-H1}, \eqref{diff-H3} and \eqref{H2-diff} we can conclude  that
  \begin{align*}
   \mE\Big[\sup_{0\leq t\leq T}|X_t-X_t^\delta|^2\Big]\lesssim
       &(1 + \Vert b\Vert_{C^1_x}^2 + \| \sigma\|_{C^1_x}^4 + \| \sigma\|_{\infty} \| \sigma\|_{C^2_x}^2)
   \int_0^T\mE[|X^\delta_s-X_s|^2]\dd s \nonumber\\&
  +\Big(1 + \Vert \sigma\Vert^8_{C^2_x}+\Vert b\Vert^4_\infty\Big) \Big( f_\delta^2 + N(\delta)^{\frac{3}{2}} \delta^{\frac{1}{2}} + N(\delta)^4\delta + 
  N(\delta)^{-1}\Big).
  \end{align*}
%
Gr\"onwall's inequality then implies the estimate
\begin{equation}\begin{split}\label{eq:almost-final}
    \mE\Big[\sup_{0\leq t\leq T}|X_t-X_t^\delta|^2\Big]
    & \leq \exp\Big( C' (1 + \Vert b\Vert_{C^1_x}^2 + \| \sigma\|_{C^1_x}^4 + \| \sigma\|_{\infty} \| \sigma\|_{C^2_x}^2) \Big)\times\\
    & \times \Big(1 + \Vert \sigma\Vert^8_{C^2_x}+\Vert b\Vert^4_\infty\Big) \Big( f_\delta^2 + N(\delta)^{\frac{3}{2}} \delta^{\frac{1}{2}} + N(\delta)^4\delta + N(\delta)^{-1}\Big).
\end{split}\end{equation}
Observe that, up to relabelling the constant $C'$, the polynomial term $1 + \Vert \sigma\Vert^8_{C^2_x}+\Vert b\Vert^4_\infty$ in \eqref{eq:almost-final} can be reabsorbed in the exponential.
Moreover recall that $\delta=n^{-1}$ and $N(\delta)\sim n^{\eps/4}$, so that
\[
N(\delta)^{\frac{3}{2}} \delta^{\frac{1}{2}} + N(\delta)^4\delta + N(\delta)^{-1}
\sim n^{\frac{3\eps}{8}-\frac{1}{2}} + n^{\eps-1} + n^{-\frac{\eps}{4}};
\]
optimizing it w.r.t. $\eps$ yields the choice $\eps=4/5$ and a corresponding decay rate $n^{-1/5}$. Combining these two last observations with \eqref{eq:almost-final} yields the desired estimate \eqref{errorest}.
\end{proof}

\section*{Acknowledgements}
LG is funded by the DFG under Germany's Excellence Strategy - GZ 2047/1, project-id 390685813.
CL acknowledges financial support by the DFG via Research Unit FOR 2402.
\bibliography{myBiblio}{}
\bibliographystyle{plain}

\end{document}